\definecolor{red}{rgb}{1,0,0}
\definecolor{blue}{rgb}{0,0,.7}
\definecolor{green}{rgb}{0,.6,0}
\definecolor{purp}{rgb}{.5,0,.5}
\numberwithin{figure}{section}   
\newtheorem{thm}{Theorem}[section]
\newtheorem{cor}[thm]{Corollary}
\newtheorem{lem}[thm]{Lemma}
\newtheorem{prop}[thm]{Proposition}
\newtheorem{conj}[thm]{Conjecture}
\newtheorem{obs}[thm]{Observation}
\newtheorem{quest}[thm]{Question}
\theoremstyle{definition}
\newtheorem{rem}[thm]{Remark}
\theoremstyle{definition}
\newtheorem{defn}[thm]{Definition}
\theoremstyle{definition}
\newtheorem{ex}[thm]{Example}
\newcommand{\ol}{\overline}
\newcommand{\bit}{\begin{itemize}}
\newcommand{\eit}{\end{itemize}}
\newcommand{\ben}{\begin{enumerate}}
\newcommand{\een}{\end{enumerate}}
\newcommand{\beq}{\begin{equation}}
\newcommand{\eeq}{\end{equation}}
\newcommand{\bea}{\begin{eqnarray*}} 
\newcommand{\eea}{\end{eqnarray*}}
\newcommand{\bpf}{\begin{proof}}
\newcommand{\epf}{\end{proof}\ms}
\newcommand{\bmt}{\begin{bmatrix}}
\newcommand{\emt}{\end{bmatrix}}
\newcommand{\ms}{\medskip}
\newcommand{\cp}{\, \Box\,}
\newcommand{\noi}{\noindent}
\newcommand{\dist}{\operatorname{dist}}
\newcommand{\diam}{\operatorname{diam}}
\newcommand{\Z}{\operatorname{Z}}
\newcommand{\Zp}{\operatorname{Z}_+}
\newcommand{\mcs}{\mathcal{S}}
\newcommand{\mct}{\mathcal{T}}
\newcommand{\pd}{\gamma_P}
\newcommand{\gbar}{\ol\gamma}
\newcommand{\pdbar}{\ol\pd}
\newcommand{\xbar}{\ol X}
\newcommand{\te}{\mathscr{PD}^{TJ}}
\newcommand{\tar}{\mathscr{PD}^{TAR}}
\newcommand{\xte}{\mathscr{X}^{TJ}}
\newcommand{\xtar}{\mathscr{X}^{TAR}}
\newcommand{\xxo}{x_0}
\newcommand{\ddo}{d_0}
\newcommand{\pdo}{pd_0}
\newcommand{\ulxo}{\underline{x_0}}
\newcommand{\ulpdo}{\underline{pd_0}}
\title{Power domination reconfiguration}
\author{Beth Bjorkman\thanks{Air Force Research Laboratory, 2241 Avionics Circle, Wright-Patterson Air Force Base, OH 45433, USA (beth.morrison@us.af.mil)}\and Chassidy Bozeman\thanks{Department of Mathematics and Statistics, Mount Holyoke College, South Hadley, MA 01075, USA (cbozeman@mtholyoke.edu)}\and Daniela Ferrero\thanks{Department of Mathematics, Texas State University, San Marcos, TX 78666, USA (dferrero@txstate.edu).}\and Mary Flagg\thanks{Department of Mathematics, Statistics and Computer Science, University of St. Thomas, Houston, TX 77006, USA (flaggm@stthom.edu).}\and Cheryl Grood\thanks{Department of Mathematics and Statistics, Swarthmore College, Swarthmore, PA 19081, USA (cgrood1@swarthmore.edu)}\and Leslie Hogben\thanks{Department of Mathematics, Iowa State University,
Ames, IA 50011, USA and American Institute of Mathematics, 600 E. Brokaw Road, San Jos\'e, CA 95112, USA
(hogben@aimath.org).}\and Bonnie Jacob\thanks{Science and Mathematics Department, National Technical Institute for the Deaf, Rochester Institute of Technology, Rochester, NY 14623, USA (bcjntm@rit.edu)} \and Carolyn Reinhart\thanks{Department of Mathematics and Statistics, Swarthmore College, Swarthmore, PA 19081, USA (carolynreinhart196@gmail.com)}}
\begin{document}

\maketitle

\vspace{-10pt}
\begin{abstract}  The study of token addition and removal and  token jumping reconfiguration graphs for power domination is initiated.  Some results established here can be extended by applying  the  methods used for power domination to reconfiguration graphs for other parameters such as domination and zero forcing, so these results are first established in a universal framework. 
\end{abstract}

\noi {\bf Keywords} reconfiguration; power domination;  token addition and removal; token jumping; hypercubes

\noi{\bf AMS subject classification} 05C69, 68R10, 05C57 

\section{Introduction}\label{s:intro}

Reconfiguration examines relationships among solutions to a problem. The \emph{reconfiguration graph} has as its vertices solutions to a problem and edges are determined by a \emph{reconfiguration rule} that describes  relationships between the solutions. Reconfiguration can also be viewed as a transformation process, where the reconfiguration rule describes a single step between one solution and another. Then, reconfiguration is a sequence of steps between solutions in which each intermediate state is also a solution.  Being able to transform one solution to another another is equivalent to having a path between the two solutions in the reconfiguration graph, i.e., the two solutions are in the same connected component of the reconfiguration graph.  

Nishimura describes three types of reconfiguration rules, token addition and removal, token jumping, and token sliding in \cite{N18}.  There she surveys recent work on both structural and algorithmic questions   across a broad variety of parameters. For many reconfiguration problems arising from  graphs (including all those we study), a solution can be represented as a subset of the vertices of a graph $G$ (such a subset is sometimes identified by the placement of a token on each of the vertices of the subset). The \emph{token addition and removal (TAR) reconfiguration graph} has an edge between two sets  if and only if one can be obtained from the other by the addition or removal of a single vertex.  The \emph{token jumping (TJ) reconfiguration graph} has an edge between two sets  if and only if one can be obtained from the other by exchanging a single vertex.  The \emph{token sliding (TS) reconfiguration graph} has an edge between two sets  if and only if one can be obtained from the other by exchanging a single vertex and the vertices exchanged are adjacent in $G$. Both the TAR graph (involving all feasible subsets of vertices of $G$) and the $k$-TAR graph, which permits feasible subsets of at most $k$ vertices, are studied. 

 In this paper we  initiate the study of the token addition and removal (both TAR and $k$-TAR) and the token jumping reconfiguration graphs for power domination.  The  concept of power domination was introduced in graph theory to model the monitoring process of electrical power networks  in \cite{HHHH02} 
 (the formal definitions of power domination and related terms are given below  after further discussion).  An electric power network can be modeled by a graph in which a vertex can represent a power plant, station, transformer, generator, or user, and edges represent the power lines between them.  An electric power network must be monitored continuously in order to prevent blackouts and power surges, and the most widely used monitoring method consists of placing Phasor Measurement Units (PMUs) at selected network locations. PMUs measure  the magnitude and the phase angle of the electric waves at the location where they are placed \cite{BMBA93}. Due to their cost, it is important to minimize the number of PMUs used to monitor a power grid. The {\em PMU placement problem} in electrical engineering consists of finding  locations in the network where PMUs can be placed to monitor the entire network with the minimum number of PMUs possible.   However, the implementation of large scale systems of PMUs has shown that minimizing the number of PMUs yields unsatisfactory results due to the lack of redundancy in the event of failures. Moreover, research in electrical engineering has proven that the addition of a few PMUs is a cost effective alternative to enhance the system reliability \cite{PB18,SW18}. As a result, there is now interest on non-minimum solutions to the PMU placement problem, and in particular, in the transformation of existing solutions into other more reliable alternatives \cite{SW18}. 

 There are certain similarities with previous work on TAR and $k$-TAR reconfiguration graphs for domination \cite{HS14} and TJ reconfiguration graphs for zero forcing \cite{GHH} (there called `token exchange' reconfiguration graphs).  This led us to establish many of the standard properties in a universal framework for properties and parameters satisfying certain conditions; examples to which these results apply include power domination, domination, and zero forcing.  TAR and $k$-TAR  reconfiguration  graphs are discussed in Section \ref{s:universal} and TJ  reconfiguration  graphs in Section \ref{s:universal-TE}.  Power domination  TAR and $k$-TAR reconfiguration  graphs are discussed in Section \ref{s:TAR} and power domination TJ  reconfiguration  graphs in Section \ref{s:TE}.  Each of these sections contains a precise definition of the relevant reconfiguration  graph and related terms.

In Section \ref{s:universal} we establish properties of a graph $G$ that are encoded in its TAR reconfiguration graph, such as the order of $G$ and $X(G)$ (where $X(G)$ is a parameter satisfying the conditions used to establish universal results).  In Section \ref{s:TAR} we present a family of graphs $G_n$ whose 
TAR graphs require a large $k$  to ensure the $k$-TAR graph is connected (arbitrarily exceeding the standard lower bound).  We also show that  the star is the only connected graph that achieves maximum upper power domination number (largest  minimal power dominating set) and thus the star is then only graph without isolated vertices that realizes its  TAR reconfiguration graph.  In Section \ref{s:TE} we provide techniques for realizing specific graphs as power domination TJ  reconfiguration  graphs, and give examples of disconnected TJ reconfiguration graphs.

 
In the remainder of this introduction we define additional terminology and notation. A \emph{graph} is a pair $G=(V(G),E(G))$ where $V(G)$ is a non-empty set of vertices and $E(G)$ is a set of edges; an \emph{edge} is a $2$-element subset of $V(G)$. The \emph{order} and \emph{size} of $G$ are  defined as $|V(G)|$ and $|E(G)|$, respectively. Let $u$ and $v$ be vertices of $G$. If $e=\{u,v\}$ is an edge of $G$ ($e$ also denoted by $uv$ or $vu$), then $e$ is \emph{incident} with $u$ and $v$, and in this case vertices $u$ and $v$ are said to be \emph{adjacent} or \emph{neighbors}. The \emph{(open) neighborhood} of $v$ is $N_G(v)$ defined as the set of all neighbors of $v$ and the \emph{closed neighborhood} of $v$ is $N_G[v]$ defined as $N_G[v]=N_G(v)\cup \{v\}$. The cardinality of $N_G(v)$ defines the \emph{degree} of $v$, which is denoted by $\deg_G(v)$. The \emph{maximum degree} and \emph{minimum degree} of $G$ are  denoted by $\Delta(G)$ and $\delta(G)$,  and they are defined as the maximum and minimum of the set of degrees of all vertices of $G$, respectively. A graph $G$ is \emph{regular} when $\Delta(G)=\delta(G)$, and in this case, $G$ is \emph{$r$-regular} for the integer $r$ such that $\Delta(G)=r$ and $\delta(G)=r$.  A \emph{leaf} is a vertex of degree one. The \emph{subgraph induced by $W\subseteq V(G)$}, denoted by $G[W]$, has the vertex set $W$ and edge set $\{wz: w,z\in W\mbox{ and }wz\in E(G)\}$.

A \emph{path} of length $\ell$  is a sequence of distinct vertices $v_0,\ldots, v_{\ell}$ such that 
$v_i$ is a neighbor of $v_{i+1}$ for every integer $i$, $0\leq i\leq \ell -1$. Such a path is also described as a path between  $v_0$ and $v_\ell$ (or vice versa).  A graph $G$ is \emph{connected} if there exists a path between any two different vertices of $G$. If $u$ and $v$ are different vertices in a connected graph $G$, the \emph{distance} between $u$ and $v$, denoted by $\dist_G(u,v)$, is defined to be the minimum length of a $uv$-path, or a path between $u$ and $v$. The \emph{diameter} of a connected graph $G$ is $\diam(G)$ defined as the maximum value of $\dist_G(u,v)$ over all pairs of different vertices $u$ and $v$ of $G$.  Two graphs $G$ and $H$ are \emph{disjoint} if $V(G)\cap V(H)=\emptyset$ and the \emph{disjoint union} of $G$ and $H$ is denoted by $G\sqcup H$.

For any positive integer $n$, let $V_n=\{v_1,\ldots, v_n\}$. A \emph{path of order $n$} is a denoted by $P_n$ and defined by $V(P_n)=V_n$  and $E(P_n)=\{\{v_i,v_{i+1}\}: 1\leq i\leq n-1\}$. A \emph{complete graph of order $n$} is denoted by $K_n$ and defined to be the $(n-1)$-regular graph with $V(K_n)=V_n$. For $n\geq 3$, a \emph{cycle of order $n$} is denoted $C_n$ and defined by $V(C_n)=V_n$ and $E(C_n)=E(P_n)\cup \{\{v_n,v_1\} \}$. For $n\geq 4$, the \emph{wheel of order $n$} is denoted by $W_n$ and is the graph defined by $V(W_n)=V(C_{n-1})\cup \{v_0\}$ and $E(W_n)=E(C_{n-1})\cup \{\{v_0,v_i\}: 1\leq i \leq n-1\}$.

When studying reconfiguration graphs we often use the \emph{symmetric  difference of sets $S$ and $S'$}, which we denote by   $S \ominus S'=(S\setminus S')\cup (S'\setminus S) =S \cup S' - S \cap S'$.


Let $G$ be a graph and let $S$ be a non-empty subset of vertices of $G$. For a non-negative integer $i$, recursively define $P^i[S]$ by 
\begin{itemize}
\item[1.] $P^0[S]=S$ and $P^1[S]=N[S]$ 
\item[2.] If $i\geq 2$, then $P^i[S]=P^{i-1}[S]\cup \{u: N_G(v)\setminus P^{i-1}[S] =\{u\} \mbox{ for some } v\in P^{i-1}[S] \}$.
\end{itemize}

If there exists $t$ such that $P^{t}[S]= V(G)$ then $S$ is a \emph{power dominating set} of $G$. 
In particular, if $P^{1}[S]= V(G)$ then $S$ is a \emph{dominating set} of $G$.  A power dominating set with minimum cardinality is a \emph{minimum power dominating set}. 
The \emph{power domination number} of $G$, denoted by $\pd(G)$, is the cardinality of a minimum power dominating set of $G$. 
The definition of power domination presented here is a simplified version of that in \cite{HHHH02} that was shown to be equivalent by Brueni and Heath in \cite{BH}.
For every positive integer $i$, $P^i(S)$ denotes the set of vertices observed from $S$ in round $i$ and it is defined by $P^i(S)=P^i[S]\setminus P^{i-1}[S]$. For $i\geq 2$, the definition of $P^i[S]$ implies that for every $u$ in $P^i(S)$ there exists $v$ in $P^{i-1}[S]$ such that $N_G(v)\setminus P^{i-1}[S] =\{u\}$, and in this case, we say that $v$ observes $u$ in round $i$.

\section{A universal approach to TAR reconfiguration}\label{s:universal}

 Here we establish some basic properties of token addition and removal (TAR) reconfiguration graphs  for  a graph parameter $X(G)$ defined as the minimum cardinality among all subsets of vertices of $G$ that satisfy a given property $X$, each of which is called an $X$-set.
A {\em minimal $X$-set} is an $X$-set such that removal of any vertex from $X$ results in a set that is not an $X$-set; every minimum $X$-set is a minimal $X$-set but not conversely. The {\em upper $X$ number} $\xbar(G)$ is the maximum cardinality of a {minimal} $X$-set. The next definition describes properties of $X$ assumed when discussing TAR reconfiguration.

\begin{defn}\label{d:Xprop} Let $X$ be a property  of sets of vertices of a graph such that:
\ben[(1)]
\item\label{x1} If $S$ is an $X$-set and $S\subset S'$, then $S'$ is an $X$-set.
\item\label{x2}  $\emptyset$ is not an $X$-set.
\item\label{d-c:nK1} An $X$-set of a disconnected graph is the union of an $X$-set of each component. 
\item\label{x4} For a graph $G$ of order $n$ with no isolated vertices, every set of $n-1$ vertices is an $X$-set. 
\een
\end{defn}

 Examples of properties satisfying the conditions in Definition \ref{d:Xprop} and their corresponding graph parameters include:
 
\bit
\item An $X$-set is a dominating set and $X(G)$ is the domination number $\gamma(G)$.
\item An $X$-set is a power dominating set and $X(G)$ is   the power domination number $\pd(G)$.
\item An $X$-set is a zero forcing set and $X(G)$ is  the zero forcing number $\Z(G)$.
\item An $X$-set is a positive semidefinite zero forcing zero forcing set and $X(G)$ is  the positive semidefinite zero forcing  number $\Zp(G)$.\eit



\begin{defn} The {\em $k$-token addition and removal (TAR) reconfiguration graph for a parameter $X$}, denoted by $\xtar_k(G)$, has as its vertex set the set of all  $X$-sets of cardinality at most $k$. There is an edge joining the two vertices  $S_1$ and $S_2$ of $\xtar_k(G)$ if and only if one can be obtained from the other by the addition or removal of a single vertex, i.e., $|S_1 \ominus S_2|=1$.  Furthermore, $\xtar (G)=\xtar_n(G)$ for a graph $G$ of order $n$.
\end{defn}



 
The domination TAR reconfiguration graph and the upper domination number are discussed in \cite{HS14}. The power domination TAR reconfiguration graph and the upper power domination number are discussed in Section \ref{s:TAR}. 

\begin{rem}\label{no-isol-vtx} Let $X$ be a property satisfying the conditions in Definition \ref{d:Xprop}. Any isolated vertex of a graph $G$ must be in every $X$-set of $G$.  A graph with no isolated vertices has more than one minimal $X$-set, and no vertex is in every minimal $X$-set.
Suppose $G'=G\sqcup rK_1$ and $G$ has no isolated vertices.   In view of     Definition \ref{d:Xprop}\eqref{d-c:nK1},   $X(G')=X(G)+r$, $\xtar_{k+r}(G')\cong\xtar_k(G)$, and $\xtar(G')\cong\xtar(G)$. 
Therefore, it is sufficient to study reconfiguration   graphs with no isolated vertices. This is particularly important from the algorithmic point of view, as removing all isolated vertices prior to running an algorithm could significantly reduce the complexity. 
\end{rem}

\begin{obs}\label{new-obs} If $X$ is a property satisfying   Definition \ref{d:Xprop}(\ref{x1}), then  knowledge of all the minimal $X$-sets is sufficient to determine $\xtar (G)$ and $\xtar_k (G)$ for all $k\ge X(G)$.  \end{obs}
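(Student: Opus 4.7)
The plan is to reduce ``which subsets are $X$-sets'' to a condition depending only on the collection of minimal $X$-sets, and then to note that the edge relation of the TAR graph is purely set-theoretic.

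First I would prove the following characterization, which is the only nontrivial step: a subset $S\subseteq V(G)$ is an $X$-set if and only if $S$ contains some minimal $X$-set. The backward direction is immediate from Definition \ref{d:Xprop}\eqref{x1}: if $M\subseteq S$ and $M$ is an $X$-set, then so is $S$. For the forward direction, if $S$ is an $X$-set, repeatedly remove vertices from $S$ whose removal still leaves an $X$-set; this terminates (since $V(G)$ is finite) in a minimal $X$-set contained in $S$. So the collection of $X$-sets of $G$ is exactly the up-closure (under inclusion, inside $2^{V(G)}$) of the collection of minimal $X$-sets.

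Next I would use this to describe the vertex set of the relevant reconfiguration graphs. The vertex set of $\xtar(G)$ is the set of all $X$-sets, and by the characterization this is determined by the minimal $X$-sets. For $\xtar_k(G)$ with $k\ge X(G)$, the vertex set is the set of $X$-sets of cardinality at most $k$; since every minimum $X$-set has cardinality $X(G)\le k$, the minimum $X$-sets are all present as vertices, and again the full vertex set is recovered by taking all supersets (inside $V(G)$) of cardinality at most $k$ of the minimal $X$-sets. Note that the underlying vertex set $V(G)$ is needed here, but it is recoverable from $G$ (which is what $\xtar(G)$ is defined on); the observation is about not needing any additional feasibility information beyond the minimal $X$-sets.

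Finally, the edge relation $|S_1\ominus S_2|=1$ depends only on the sets $S_1,S_2$ themselves, not on any further graph data. Therefore once the vertex set has been determined as above, the edges of $\xtar(G)$ and of $\xtar_k(G)$ are fully determined. The only step that required Definition \ref{d:Xprop}\eqref{x1} is the up-closure characterization; I do not anticipate any real obstacle, as the argument is essentially a one-line application of monotonicity together with the definition of ``minimal.''
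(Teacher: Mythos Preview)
Your proposal is correct and is exactly the intended justification: the paper records this statement as an Observation with no accompanying proof, and your argument---that by Definition~\ref{d:Xprop}\eqref{x1} the $X$-sets are precisely the supersets of minimal $X$-sets, after which the vertex sets of $\xtar(G)$ and $\xtar_k(G)$ are determined and the edge relation is purely set-theoretic---is the one-line monotonicity argument the paper is implicitly relying on.
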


\subsection{Basic properties}\label{ss:ubasic}

In this section we present some elementary properties of TAR reconfiguration graphs. 


\begin{prop}\label{x-deg-diam} Let $X$ be a property satisfying the conditions in Definition \ref{d:Xprop} and
let  $G$ be a graph on $n$ vertices with no isolated vertices. Then 
\ben\item\label{p-c:umaxdeg} 
$\Delta(\xtar (G))=n$.
\item\label{p-c:udist} For any two $X$-sets $S$ and $S'$,  $\dist_{\xtar(G)}(S,S')= |S\ominus S'|$.
\item\label{p-c:udiam} $\diam(\xtar (G))=\max_{S,S'}(|S\ominus S'|)$ where $S$ and $S'$ are  $X$-sets.
\item\label{p-c:udiam-bd} $n-X(G)\le \diam(\xtar (G))\le n$.
\een
\end{prop}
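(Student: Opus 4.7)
The plan is to handle the four parts in order, with part~\eqref{p-c:udist} carrying the main content; parts~\eqref{p-c:udiam} and~\eqref{p-c:udiam-bd} then follow quickly. Throughout, the key tool is the monotonicity condition~\eqref{x1} of Definition~\ref{d:Xprop}, together with condition~\eqref{x4} for the degree bound.

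For part~\eqref{p-c:umaxdeg}, first note that every vertex $S$ of $\xtar(G)$ satisfies $\deg_{\xtar(G)}(S)\le n$, since each neighbor is obtained by flipping the membership of exactly one of the $n$ vertices of $G$, yielding at most $|V(G)\setminus S|+|S|=n$ candidates. To attain this bound I would use $V(G)$ itself: condition~\eqref{x4} guarantees that each set $V(G)\setminus\{v\}$ is an $X$-set, so all $n$ removals produce valid neighbors, while no additions are possible. (That $V(G)$ is itself an $X$-set follows from~\eqref{x4} and monotonicity~\eqref{x1}, so $V(G)$ lies in $\xtar(G)$.) Hence $\deg_{\xtar(G)}(V(G))=n$ and $\Delta(\xtar(G))=n$.

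For part~\eqref{p-c:udist}, I would prove the two inequalities separately. For $\dist_{\xtar(G)}(S,S')\le|S\ominus S'|$, the plan is to exhibit an explicit walk: first add the vertices of $S'\setminus S$ one at a time in any order, reaching $S\cup S'$; then remove the vertices of $S\setminus S'$ one at a time, ending at $S'$. By~\eqref{x1}, every intermediate set in the first phase contains $S$ and every intermediate set in the second phase contains $S'$, so each is an $X$-set and every step is an edge of $\xtar(G)$. For the reverse inequality, observe that along any path $S=S_0,S_1,\dots,S_\ell=S'$ in $\xtar(G)$, the integer $|S_i\ominus S'|$ changes by exactly $\pm 1$ between consecutive terms, so $\ell\ge|S_0\ominus S'|=|S\ominus S'|$.

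Parts~\eqref{p-c:udiam} and~\eqref{p-c:udiam-bd} are then quick consequences. Part~\eqref{p-c:udiam} is immediate from~\eqref{p-c:udist} together with the definition of diameter. For~\eqref{p-c:udiam-bd}, the upper bound $\diam(\xtar(G))\le n$ holds because $|S\ominus S'|\le|V(G)|=n$ for any two subsets of $V(G)$, while the lower bound $n-X(G)\le\diam(\xtar(G))$ follows by taking $S=V(G)$ and $S'$ a minimum $X$-set, for which $|S\ominus S'|=n-X(G)$. I do not anticipate any significant obstacles: the only real content is the upper-bound walk in part~\eqref{p-c:udist}, and the required invariance ``stay inside the set of $X$-sets'' is exactly what monotonicity~\eqref{x1} delivers.
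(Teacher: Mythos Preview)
Your proposal is correct and follows essentially the same approach as the paper: for \eqref{p-c:umaxdeg} use $V(G)$ to attain degree $n$ via condition~\eqref{x4}; for \eqref{p-c:udist} build the path through $S\cup S'$ using monotonicity~\eqref{x1} and observe no shorter path exists; and derive \eqref{p-c:udiam}--\eqref{p-c:udiam-bd} as immediate consequences, the lower bound coming from a minimum $X$-set versus $V(G)$. Your write-up is in fact slightly more explicit than the paper's (e.g., the $\pm 1$ argument for the distance lower bound), but the ideas are identical.
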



\bpf \eqref{p-c:umaxdeg}:  
Let $S\subseteq V(G)$ be a vertex of $\xtar(G)$ and let $|S|=r$. Then there are at most $r$ vertices that can be deleted (one at a time) leaving an $X$-set, and at most $n-r$ vertices that can be added (one at a time).  Thus $\deg_{\xtar(G)}(S)\le n$.  Since every set of $n-1$ vertices is an $X$-set, $\deg_{\xtar(G)}(V(G))= n$.

\eqref{p-c:udist} and \eqref{p-c:udiam}: A path of length $|S\ominus S'|$ from $S'$ to $S$ can be realized by successively adding vertices in $S\setminus S'$ and then deleting vertices in $ S'\setminus S$, and there is no shorter path.  

\eqref{p-c:udiam-bd}: Apply \eqref{p-c:udist}, so $\diam(\xtar(G))\leq n$ is immediate. For $n-X(G)\le \diam(\xtar(G))$,  consider the distance between a minimum $X$-set and $V(G)$.   
\epf

 \begin{prop}\label{p:umindegreeTAR}
Let $X$ be a property satisfying the conditions in Definition \ref{d:Xprop} and let $G$ be a graph on $n\geq3$ vertices. 
Then  $\delta(\xtar(G))=n-\xbar(G)$.
\end{prop}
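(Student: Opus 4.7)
The plan is to analyze the degree of an arbitrary vertex $S\in V(\xtar(G))$ and then exhibit one that realizes the minimum. For $S$ with $|S|=r$,
\[
\deg_{\xtar(G)}(S) = |\{v\notin S: S\cup\{v\}\text{ is an }X\text{-set}\}| + |\{v\in S: S\setminus\{v\}\text{ is an }X\text{-set}\}|.
\]
By Definition \ref{d:Xprop}\eqref{x1}, every superset of the $X$-set $S$ is again an $X$-set, so all $n-r$ potential additions contribute; the remaining work is to count valid removals.

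For the lower bound $\deg_{\xtar(G)}(S)\ge n-\xbar(G)$, I would iteratively strip vertices from $S$ while preserving the $X$-set property until reaching a minimal $X$-set $S^*\subseteq S$, which satisfies $|S^*|\le \xbar(G)$. The key observation is that for every $v\in S\setminus S^*$, the set $S\setminus\{v\}$ contains $S^*$ and is therefore an $X$-set by Definition \ref{d:Xprop}\eqref{x1}. Hence each such $v$ is individually removable from $S$, yielding at least $r-|S^*|\ge r-\xbar(G)$ valid removals and
\[
\deg_{\xtar(G)}(S) \ge (n-r) + \max\{0,\,r-\xbar(G)\} \ge n-\xbar(G).
\]

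To show the bound is sharp, I would pick a minimal $X$-set $S^*$ of cardinality $\xbar(G)$; minimality forbids every removal, while all $n-\xbar(G)$ additions remain valid, giving $\deg_{\xtar(G)}(S^*) = n-\xbar(G)$, so that $\delta(\xtar(G))=n-\xbar(G)$.

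The step requiring the most care is the passage from the iterative peeling (which a priori only certifies one valid initial removal) to the stronger claim that \emph{each} vertex of $S\setminus S^*$ is independently removable from the original $S$. The trick is that the fixed floor $S^*$ serves as a universal witness: it simultaneously shows that every $S\setminus\{v\}$ with $v\in S\setminus S^*$ contains an $X$-set and hence is an $X$-set itself by Definition \ref{d:Xprop}\eqref{x1}. Once this is in hand, the degree bound follows by straightforward counting, and property \eqref{x2} is invoked only tacitly to ensure $\xbar(G)\ge 1$.
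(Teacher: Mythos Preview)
Your proof is correct and follows essentially the same approach as the paper's: both arguments fix a minimal $X$-set $S^*$ (the paper calls it $M$) contained in a given $X$-set $S$, observe that every $v\in S\setminus S^*$ is individually removable because $S\setminus\{v\}\supseteq S^*$, count the $n-|S|$ valid additions, and realize the minimum degree at a minimal $X$-set of cardinality $\xbar(G)$. Your write-up is somewhat more explicit about the ``universal witness'' role of $S^*$ and the case split on $r$ versus $\xbar(G)$, but the underlying argument is the same.
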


\bpf
Let 
$R=\xtar(G)$.   Let $M$ be a minimal $X$-set of $G$, with $|M|=m$ for some $m\geq 1$.  Since $N_R(M)=\{M \cup \{v\}: v \in V(G)\setminus M\}$,  $n-m=\deg_R(M)\ge \delta(R)$. 

Let $S$ be an $X$-set such that $M \subsetneq S$. Denote the elements of $S\setminus M$ by $U= \{u_1,\ldots,u_\ell\} $ and those of  $V(G)\setminus S$ by $W=\{w_1,\ldots, w_{n-m-\ell} \}$. Then the sets of the form $S-\{u_i\}$ with $u_i \in U$ and $S \cup  \{w_j\}$ with $w_j \in W$ are neighbors of $S$ in $R=\xtar(G)$, and $|N_R(S)| \geq n-m$. 

Since $\deg_R(M) = n-|M|$ for any minimal power dominating set, a minimal power dominating set of maximum cardinality is a minimum degree vertex in $R$.  That is, $\delta(\xtar(G)) = n - \xbar(G)$.
\epf

\begin{rem}\label{inducedstar}
Let $X$ be a property satisfying the conditions in Definition \ref{d:Xprop} and let $G$ be a graph of order $n$ with no isolated vertices. By considering the subgraph of $\xtar(G)$ induced by $V(G)$ and the $n$ sets of cardinality $n-1$, we see that $\xtar(G)$ has an induced subgraph isomorphic to $K_{1,n}$.

Let $S$ be a minimum $X$-set of $G$. For every vertex $v_1,\dots,v_{n-X(G)}\in V(G)\setminus S$, the set $S\cup\{v_i\}$ is an $X$-set of $G$ and is adjacent to $S$ in $\xtar_{X(G)+1}(G)$. 
Since in $\xtar_{X(G)+1}(G)$ there are no edges between $X$-sets of the same cardinality, the subgraph of $\xtar_{X(G)+1}(G)$  induced by $S$ and $S\cup \{v_i\}, i=1,\dots,n-X(G)$ is isomorphic to $K_{1,n-X(G)}$.  
\end{rem}


\subsection{Connectedness}
A main question in reconfiguration is: For which $k$ is the $k$-reconfiguration graph connected?   As noted in \cite{HS14} for domination, $\xtar (G)$ is always connected (every $X$-set can be augmented one vertex at a time to get $V(G)$).  Following \cite{HS14}, we consider the least $k_0$ such that $\xtar_k(G)$ is connected for all $k\ge k_0$; this value is denoted by $\xxo(G)$.  As suggested in \cite{N18}, we also consider the least $k$ such that $\xtar_k(G)$ is connected; this value is denoted by $\ulxo(G)$.

\begin{prop}\label{basic1}
Let $X$ be a property satisfying the conditions in Definition \ref{d:Xprop} and let $G$ be a graph of order $n$ with no isolated vertices.  Then $X(G)\le \ulxo(G)\le\xxo(G)$ and $\xbar(G)+1\le \xxo(G)\le\min\{\xbar(G)+X(G),n\}$.  If $G$ has more than one minimum $X$-set, then $X(G)+1\le \ulxo(G)$.
\end{prop}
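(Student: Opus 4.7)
The plan is to prove the five inequalities separately. The two leftmost bounds $X(G)\le\ulxo(G)\le\xxo(G)$ come from the definitions: for $k<X(G)$ no $X$-set of size at most $k$ exists, so $\xtar_k(G)$ is empty and, under the usual convention that a reconfiguration graph must contain solutions to be considered connected, cannot be connected, forcing $X(G)\le\ulxo(G)$; and because $\xtar_{\xxo(G)}(G)$ is connected by the definition of $\xxo$, while $\ulxo(G)$ is the least $k$ for which $\xtar_k(G)$ is connected, we obtain $\ulxo(G)\le\xxo(G)$.

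For the lower bound $\xbar(G)+1\le\xxo(G)$ I would exhibit an isolated vertex in $\xtar_{\xbar(G)}(G)$. Pick a minimal $X$-set $M$ with $|M|=\xbar(G)$. Minimality rules out any edge produced by removing a vertex from $M$, and the cardinality cap $\xbar(G)$ rules out any edge produced by adding one. Since $G$ has no isolated vertices, Remark~\ref{no-isol-vtx} supplies a second minimal $X$-set, so $\xtar_{\xbar(G)}(G)$ has at least two vertices with $M$ isolated, and is disconnected.

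The upper bound $\xxo(G)\le\min\{\xbar(G)+X(G),n\}$ splits into two parts. The inequality $\xxo(G)\le n$ is immediate: $\xtar_n(G)=\xtar(G)$ is always connected, and no larger $k$ need be examined. For $\xxo(G)\le\xbar(G)+X(G)$, fix $k\ge\xbar(G)+X(G)$ and a minimum $X$-set $S_0$. Given any vertex $S$ of $\xtar_k(G)$, the definition of ``minimal'' lets me remove vertices from $S$ one at a time, each removal preserving the $X$-set property, until a minimal $X$-set $M\subseteq S$ is reached; this traces an $S$-to-$M$ walk inside $\xtar_k(G)$. Next I walk from $M$ to $S_0$ by first adjoining the elements of $S_0\setminus M$ one at a time, then deleting the elements of $M\setminus S_0$ one at a time. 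Each intermediate set contains $M$ during the ascent and $S_0$ during the descent, hence is an $X$-set by Definition~\ref{d:Xprop}(\ref{x1}), and its cardinality is at most $|M\cup S_0|\le\xbar(G)+X(G)\le k$. Thus the walk stays inside $\xtar_k(G)$, every vertex of $\xtar_k(G)$ is path-connected to $S_0$, and $\xtar_k(G)$ is connected.

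Finally, if $G$ admits more than one minimum $X$-set, then $\xtar_{X(G)}(G)$ consists solely of these minimum $X$-sets and carries no edges, since any TAR edge changes cardinality by one; thus it is an edgeless graph on at least two vertices, hence disconnected, yielding $\ulxo(G)\ge X(G)+1$. The only substantive step is the constructive upper-bound argument; the main bookkeeping there is the peak-size estimate $|M\cup S_0|\le\xbar(G)+X(G)$ together with Definition~\ref{d:Xprop}(\ref{x1}) to confirm that every intermediate set is an $X$-set.
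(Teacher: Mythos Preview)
Your proof is correct and follows essentially the same approach as the paper's: the isolated-vertex argument for the lower bound $\xbar(G)+1\le\xxo(G)$, and the ``go through $M\cup S_0$'' path for the upper bound $\xxo(G)\le\xbar(G)+X(G)$. Your version is slightly more explicit than the paper's in two places---you spell out the reduction from an arbitrary $X$-set $S$ down to a minimal $M\subseteq S$ (where the paper just remarks that ``building onto minimal sets does not disconnect the reconfiguration graph''), and you flag the empty-graph convention behind $X(G)\le\ulxo(G)$---but the ideas are identical.
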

\bpf  From the definitions, $X(G)\le \ulxo(G)\le  \xxo(G)$ is immediate. Recall that by our assumptions about $X$, $G$ has more than one minimal $X$-set. Let $S\subset V(G)$ be minimal $X$-set with $|S|=\xbar(G)$.  Then $S$ is an isolated vertex of $\xtar_{\xbar(G)}(G)$ (because we can't add a vertex, and removal results in a set that is not an $X$-set).  Thus $\xbar(G)+1\le \xxo(G)$.  Similarly, $X(G)+1\le \ulxo(G)$ when $G$ has more than one minimum $X$-set.


Let $k_0=\min\{\xbar(G)+X(G),n\}$.  Let $S\subset V(G)$ be a minimal $X$-set of $G$ and $S'\subset V(G)$ be a minimum $X$-set of $G$.    To ensure $\xtar_k(G)$ is connected for all $k\ge k_0$, it is sufficient to show that  every such pair of vertices $S$ and $S'$ is connected in $\xtar_{k_0}(G)$, because building onto minimal sets does not disconnect the reconfiguration graph. 
If $\xbar(G)+X(G)\le n$, define $S''=S\cup S'$.  Then each of $S$ and $S'$ is connected by a path to $S''$ by adding one vertex at a time.  Thus $\xxo(G)\le\xbar(G)+X(G)$. 
\epf

The bounds in Proposition \ref{basic1} are already known for some  parameters, e.g., for domination,
$\gbar(G)+1\le \ddo(G)\le\min\{\gbar(G)+\gamma(G), n\}$ \cite{HS14} (where   $\ddo(G)$  denotes the least $k_0$ such that the domination $k$-TAR reconfiguration graph is connected for all $k\ge k_0$). 

\begin{cor}\label{c:X1}
Let $X$ be a property satisfying the conditions in Definition \ref{d:Xprop} and let $G$ be a graph with no isolated vertices. If $X(G)=1$, then $\xxo(G)=\xbar(G)+1$ . 
\end{cor}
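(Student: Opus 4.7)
The plan is to derive the corollary directly from the bounds in Proposition \ref{basic1}, so no real new machinery is needed; the only thing to check is that the minimum in the upper bound is attained by the $\overline{X}(G)+1$ term.

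First I would apply Proposition \ref{basic1} with $X(G)=1$. The lower bound immediately gives $\overline{X}(G)+1 \le \xxo(G)$. The upper bound gives
\[
\xxo(G) \le \min\{\overline{X}(G)+X(G),\, n\} = \min\{\overline{X}(G)+1,\, n\}.
\]
So the whole content of the proof is showing that $\overline{X}(G)+1 \le n$, i.e., that $\overline{X}(G) \le n-1$.

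The main (and only) step is this bound on the upper $X$ number, and it is where the hypothesis ``no isolated vertices'' is used. I would argue that if $G$ has order $n$ and no isolated vertices, then Definition \ref{d:Xprop}\eqref{x4} says that every set of $n-1$ vertices is an $X$-set. In particular, for any $v\in V(G)$, $V(G)\setminus\{v\}$ is an $X$-set, so $V(G)$ is not a minimal $X$-set (some vertex can be removed while retaining the property). Hence no minimal $X$-set has cardinality $n$, giving $\overline{X}(G) \le n-1$ and therefore $\overline{X}(G)+1 \le n$.

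Combining the two inequalities yields $\overline{X}(G)+1 \le \xxo(G) \le \overline{X}(G)+1$, so equality holds. I do not expect any real obstacle: this is a direct consequence of Proposition \ref{basic1} plus a one-line check using Definition \ref{d:Xprop}\eqref{x4}.
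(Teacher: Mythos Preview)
Your proof is correct and follows the intended approach: the corollary is stated in the paper without proof, as an immediate consequence of Proposition~\ref{basic1}, and your argument spells this out. One minor observation: the separate verification that $\overline{X}(G)\le n-1$ is not strictly necessary, since the squeeze $\overline{X}(G)+1\le \xxo(G)\le \min\{\overline{X}(G)+1,n\}$ already forces $\overline{X}(G)+1\le n$ (otherwise the chain would be contradictory); but your direct check via Definition~\ref{d:Xprop}\eqref{x4} is perfectly valid and arguably more transparent.
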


 The next remark presents a condition that guarantees that a graph $G$ with $X(G)>1$ satisfies $\xbar(G)+2\le \xxo(G)$.

\begin{rem}
Let $X$ be a property satisfying the conditions in Definition \ref{d:Xprop} and let $G$ be a graph with no isolated vertices and $X(G)>1$. If $G$ has an $X$-set $S$ such that $|S|=\xbar(G)$ and for every $u$ and $v$ such that $u\in S$ and  $v\in V(G) \setminus S$, the set $B=(S\setminus \{u\} )\cup \{v\}$ is not an $X$-set of $G$, then   the graph $\xtar_{\xbar(G)+1}(G)$ has a connected component isomorphic to $K_{1,n-\xbar(G)}$ where $n= |V(G)|$ (cf. Remark \ref{inducedstar}).  Since there is a minimal $X$-set that is not $S$, there is another component.
\end{rem}

\begin{prop}\label{p:disj-min-sets}
Let $X$ be a property satisfying the conditions in Definition \ref{d:Xprop} and let  $G$ be a graph. Partition the minimal $X$-sets  into two sets $\{S_1,\dots,S_k\}$ and $\{T_1,\dots,T_\ell\}$.   Let $s=\max_{i=1}^k|S_i|$, $t=\max_{i=1}^\ell|T_i|$, 
$\mcs=\cup _{i=1}^k S_i$,    $\mct=\cup _{i=1}^\ell T_i$, and suppose $\mcs\cap\mct=\emptyset$. Then 
\ben
\item $\diam(\xtar_{s+t}(G))\ge s+t$.
\item $x_0(G)\ge s+t$.
\een
\end{prop}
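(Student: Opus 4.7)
For (1), I would choose specific $X$-sets realizing the maxima: pick $S_i$ with $|S_i|=s$ and $T_j$ with $|T_j|=t$. Both are $X$-sets of size at most $s+t$, so they are vertices of $\xtar_{s+t}(G)$. The hypothesis $\mcs\cap\mct=\emptyset$ forces $S_i\cap T_j=\emptyset$, hence $|S_i\ominus T_j|=s+t$. By Proposition~\ref{x-deg-diam}\eqref{p-c:udist}, $\dist_{\xtar(G)}(S_i,T_j)=s+t$, and since $\xtar_{s+t}(G)$ is an induced subgraph of $\xtar(G)$, distances can only grow under the restriction, giving $\dist_{\xtar_{s+t}(G)}(S_i,T_j)\ge s+t$ and the desired diameter bound.

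For (2), the plan is to show that $\xtar_{s+t-1}(G)$ is disconnected, which by definition of $\xxo(G)$ immediately yields $\xxo(G)\ge s+t$; specifically I would argue that the same $S_i$ and $T_j$ lie in distinct components of $\xtar_{s+t-1}(G)$. Suppose toward contradiction that a path $A_0=S_i, A_1,\dots,A_m=T_j$ exists in $\xtar_{s+t-1}(G)$. Every $A_k$ is an $X$-set, hence contains some minimal $X$-set, which by the partition is either $S$-type (some $S_p\subseteq\mcs$) or $T$-type (some $T_q\subseteq\mct$); the disjointness $\mcs\cap\mct=\emptyset$ prevents $A_0=S_i$ from containing any $T$-type minimal, and symmetrically prevents $A_m$ from containing any $S$-type minimal. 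Let $k^*$ be the first index at which $A_{k^*}$ contains a $T$-type minimal $T_q$. The single-vertex difference $A_{k^*}\ominus A_{k^*-1}$ cannot be a removal (else $T_q\subseteq A_{k^*}\subsetneq A_{k^*-1}$ would contradict minimality of $k^*$), so $A_{k^*}\supsetneq A_{k^*-1}$ and $A_{k^*}$ still contains the $S$-type minimal $S_p$ present in $A_{k^*-1}$. Disjointness of $\mcs$ and $\mct$ then yields $|A_{k^*}|\ge|S_p|+|T_q|$.

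The main obstacle is upgrading this to $|A_{k^*}|\ge s+t$, which is what would contradict $|A_{k^*}|\le s+t-1$; a priori the minimal $X$-sets $S_p,T_q$ arising at the transition need not be the specific maxima realizing $s$ and $t$. I expect the resolution to use the minimality of $S_i$ (so the first step of the path must be an addition and $S_i\subseteq A_1$, propagating $S_i$-containment until an $S_i$-vertex is deliberately removed), combined with a symmetric analysis anchored at $A_m=T_j$, to pin down that the critical transition must occur at a moment when $S_i$ is still entirely contained and a full copy of $T_j$ is being acquired, forcing $|S_p|=s$ and $|T_q|=t$. This bookkeeping of containment of the extremal sets along the path is the step where I would spend the most effort when writing out the full proof; as a fallback, when $\min\{s,t\}\le 1$ one can simply invoke Proposition~\ref{basic1} to get $\xxo(G)\ge\xbar(G)+1=\max\{s,t\}+1=s+t$, so the delicate argument is really needed only when $\min\{s,t\}\ge 2$.
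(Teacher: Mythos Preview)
Your argument for (1) is correct and matches the paper's exactly.

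For (2), the paper's proof is a single unjustified sentence (``there is no path between $S$ and $T$ in $\xtar_k(G)$ for $\max\{s,t\}\le k\le s+t-1$''); you go further and correctly isolate the real difficulty: at the first index $k^*$ where a $T$-type minimal appears one only obtains $|A_{k^*}|\ge|S_p|+|T_q|$ for whatever minimal $X$-sets $S_p,T_q$ happen to be present, with nothing forcing $|S_p|=s$ or $|T_q|=t$. Your proposed resolution---tracking containment of the extremal $S_i$ and $T_j$ from the two ends---does not work. The path can shed $S_i$ by first acquiring a \emph{small} $T$-type minimal: once $T_q\subseteq A_r$ with $|T_q|<t$, vertices of $S_i$ may be deleted while staying an $X$-set, so the last set containing all of $S_i$ has size only $s+|T_q|<s+t$; the symmetric phenomenon occurs at the $T_j$ end, and the two ``anchored'' intervals need not overlap.

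Concretely, take a connected graph on $\{a,b,c,a',b',d,e,f,d',e'\}$ and a property $X$ satisfying Definition~\ref{d:Xprop} whose minimal $X$-sets are exactly $S_1=\{a,b,c\}$, $S_2=\{a',b'\}$, $T_1=\{d,e,f\}$, $T_2=\{d',e'\}$ (one checks conditions (1)--(4) hold). Here $s=t=3$ and $\mcs\cap\mct=\emptyset$, yet
\[
\{a,b,c\},\ \{a,b,c,d'\},\ \{a,b,c,d',e'\},\ \{a,b,d',e'\},\ \{a,d',e'\},\ \{d',e'\},\ \{d,d',e'\},\ \{d,e,d',e'\},\ \{d,e,f,d',e'\},\ \{d,e,f,e'\},\ \{d,e,f\}
\]
is a path in $\xtar_5(G)$, and in fact $\xtar_5(G)$ is connected, so $\xxo(G)=5<6=s+t$. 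Thus the obstacle you flagged is genuine: neither your bookkeeping outline nor the paper's bare assertion closes the gap, and additional hypotheses (e.g.\ all $S_i$ of size $s$, or all $T_j$ of size $t$, as in the application in Theorem~\ref{t:TARdisconnect}) are needed for the argument to go through.
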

\bpf 
Let $S \in \{S_1,\dots,S_k\}$ with $|S|=s$, and $T \in \{T_1,\dots,T_\ell\}$ with $|T|=t$.   Since $S\cap T=\emptyset$,  $\dist_{\xtar(G)}(S,T)=s+t$ 
by Proposition \ref{x-deg-diam}.  
Thus $\diam(\xtar_{s+t}(G))\ge s+t$.  Furthermore, there is no path between $S$ and $T$ in $\xtar_k(G)$ for $k$ with $\max\{s,t\}\le k\le s+t-1$.
\epf

Note that the assumption that $\mcs\cap\mct=\emptyset$ in Proposition \ref{p:disj-min-sets} implies that any graph to which this applies has no isolated vertices. 


\subsection{Hypercube representation}

 The graph having all  $(0,1)$-sequences as vertices with two sequences adjacent exactly when they differ in one place is a characterization of $Q_n$, the hypercube of dimension $n$. There is a well-known representation of any TAR reconfiguration graph as a subgraph of a hypercube.  Let $G$ be a graph with $|V(G)|=n$. Any subset $S$ of $V(G)$ can be represented by a sequence $(s_1, \dots, s_n)$ where $s_i=1$ if $v_i\in S$ and $s_i=0$ if $v_i\not\in S$.    
If we restrict  $S$ to being an $X$-set,  then $\xtar(G)$ is isomorphic to a subgraph of $Q_n$.  

\begin{rem}\label{r:hypercube-bip} Let $X$ be a property satisfying the conditions in Definition \ref{d:Xprop}. For every graph $G$  of order $n$ and $k\ge X(G)$, $\xtar_k (G)$ is a subgraph of $Q_n$.  Thus $\xtar_k (G)$  is bipartite. Hence, a graph that is not bipartite cannot be realized as a TAR graph.
\end{rem}

\begin{rem} Let $X$ be a property satisfying the conditions in Definition \ref{d:Xprop}. It is easy to see that  $\xtar (G)$ is not isomorphic to any hypercube for any graph  of order $n\ge 2$:  Suppose first that $G$ has  no isolated vertices. Recall that $\Delta(\xtar(G))=n$, so if $\xtar(G)$ is a hypercube it must be $Q_n$. But the order of $Q_n$ is $2^n$ and the order of  $\xtar (G)$ is at most $2^n-1$ since $\emptyset$ is not an $X$-set. If $G'=G\sqcup rK_1$ and $G$ has no isolated vertices, then $\xtar(G')\cong\xtar(G)\not\cong Q_d$ for any $d$. 
 \end{rem}

\begin{lem}\label{L:uembedQt} Let $X$ be a property satisfying the conditions in Definition \ref{d:Xprop}.
Let $G$ be a graph on $n\geq 2$ vertices 
and let $1\leq t \leq n-1$. Then   $X(G)\leq n-t$ if and only if $\xtar(G)$ has an induced subgraph isomorphic to the hypercube $Q_t$. 
\end{lem}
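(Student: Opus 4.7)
The plan is to prove both directions of the biconditional separately.

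For the forward direction, assume $X(G) \le n - t$ and fix a minimum $X$-set $S$, so that $|V(G) \setminus S| \ge t$. Choose any $U \subseteq V(G) \setminus S$ with $|U| = t$ and form the collection $\mathcal{H} = \{S \cup A : A \subseteq U\}$. Each $S \cup A$ is an $X$-set by Definition \ref{d:Xprop}(\ref{x1}), the map $A \mapsto S \cup A$ is injective since $S \cap U = \emptyset$, and $|(S \cup A_1) \ominus (S \cup A_2)| = |A_1 \ominus A_2|$. Hence the subgraph of $\xtar(G)$ induced by $\mathcal{H}$ is isomorphic to $Q_t$.

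For the reverse direction, assume $\xtar(G)$ has an induced subgraph $H \cong Q_t$. Fix an isomorphism labelling the vertices of $H$ as $\{S_A : A \subseteq [t]\}$ with adjacency characterized by $|A \ominus B| = 1$. The central claim is that there exist distinct vertices $v_1, \ldots, v_t \in V(G)$ such that $S_A = S_\emptyset \ominus \{v_i : i \in A\}$ for every $A \subseteq [t]$. Define $v_i$ by $\{v_i\} = S_\emptyset \ominus S_{\{i\}}$; distinctness follows because the $S_{\{i\}}$ are distinct vertices of $H$. Proceed by induction on $|A|$: for $|A| \ge 2$ and $i \in A$, the edge from $S_A$ to $S_{A \setminus \{i\}}$ in $H$ gives $S_A = S_{A \setminus \{i\}} \ominus \{w_i\}$ for some $w_i \in V(G)$, so by the inductive hypothesis $S_A = S_\emptyset \ominus \{v_j : j \in A \setminus \{i\}\} \ominus \{w_i\}$. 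Equating the analogous expressions for two different indices $i, j \in A$ yields $\{w_i\} \ominus \{w_j\} = \{v_i, v_j\}$, hence $\{w_i, w_j\} = \{v_i, v_j\}$. When $|A| \ge 3$, intersecting this relation over two distinct choices of $j \in A \setminus \{i\}$ pins $w_i = v_i$; when $|A| = 2$, the alternative $w_i = v_j$ would force $S_A = S_\emptyset$, which is impossible since $S_A \ne S_\emptyset$, so again $w_i = v_i$.

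Granting the claim, set $A^* = \{i \in [t] : v_i \in S_\emptyset\}$; then $S_{A^*} = S_\emptyset \setminus \{v_i : i \in A^*\}$ is an $X$-set with $|S_{A^*}| = |S_\emptyset| - |A^*|$. Since $S_\emptyset \cup \{v_1, \ldots, v_t\} \subseteq V(G)$ gives $|S_\emptyset| + t - |A^*| \le n$, we conclude $X(G) \le |S_{A^*}| \le n - t$. The main obstacle is the inductive step of the central claim, where one must consistently identify the ``new'' vertex $w_i$ across all pairs $(A, i)$; the argument bifurcates at $|A| = 2$ versus $|A| \ge 3$ because only in the latter case is there a third index available for the intersection trick, while the $|A| = 2$ case must instead exploit that $S_A \ne S_\emptyset$ to break the symmetry between $v_i$ and $v_j$.
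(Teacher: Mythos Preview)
Your proof is correct, but the reverse direction is far more involved than necessary. The paper dispatches it in two lines: choose $S \in V(H)$ of \emph{minimum cardinality} among the $X$-sets appearing in $H$; then none of the $t$ neighbors of $S$ in $H$ can arise by deleting a vertex from $S$, so each is of the form $S \cup \{u\}$ for some $u \notin S$, giving $|V(G)\setminus S|\ge t$ and hence $X(G)\le |S|\le n-t$. Your argument instead proves a genuine structural fact---that any induced $Q_t$ in $\xtar(G)$ is realized as $\{S_\emptyset \ominus \{v_i : i\in A\}: A\subseteq [t]\}$ for some fixed $S_\emptyset$ and distinct $v_1,\dots,v_t$---and then locates the minimum-cardinality vertex explicitly as $S_{A^*}$. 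That structural claim is stronger than what the lemma requires and could be of independent interest, but for the lemma itself the paper's ``pick the smallest vertex'' trick avoids the entire induction and the $|A|=2$ versus $|A|\ge 3$ bifurcation.
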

\bpf 
Let $S$ be a minimum $X$-set and let $W=V(G)\setminus S$.  The induced subgraph of $\xtar(G)$ having vertices consisting of sets of the form $S\sqcup W'$ over all subsets $W'\subseteq W$  is $Q_{n-X(G)}$.  Any hypercube $Q_{t}$ for $n-X(G)\ge t\ge 1$ is an induced subgraph of $Q_{n-X(G)}$.

 Suppose $H$ is an induced subgraph of $\xtar(G)$ isomorphic to $Q_t$ for some $1\leq t \leq n-1$. Choose $S \in V(H)$ such that $|S|$ is minimum over all $X$-sets in $H$.  Since no vertex in $H$ has fewer vertices than $S$, every one of the $t$ neighbors of $S$ in $H$ is obtained by adding a vertex of $G$ to $S$.  Thus $|V(G)\setminus S| \ge t$ and $X(G)\le |S|\le n-t$.
\epf

\begin{cor}\label{c:X-Qt}Let $X$ be a property satisfying the conditions in Definition \ref{d:Xprop}.
Let $G$ be a graph on $n\geq 2$ vertices. 
Then $d=n-X(G)$ is the  maximum dimension of a hypercube isomorphic to an induced subgraph of the reconfiguration graph 
 $\xtar(G)$. 
\end{cor}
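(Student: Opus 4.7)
The plan is to derive this corollary directly from Lemma \ref{L:uembedQt}, which already does essentially all of the work. The corollary is really just the optimization of $t$ in that biconditional.

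First, I would observe that $d = n - X(G)$ is itself achieved. Since $X(G) \leq n - (n - X(G))$ trivially (with equality), Lemma \ref{L:uembedQt} guarantees (provided $1 \leq n - X(G) \leq n-1$, i.e.\ $1 \leq X(G) \leq n-1$) that $\xtar(G)$ contains an induced copy of $Q_{n-X(G)}$. The edge cases are easy: by Definition \ref{d:Xprop}(\ref{x2}), $X(G) \geq 1$, so $d \leq n-1$; and if $X(G) = n$ (so $d=0$), then $V(G)$ is the only $X$-set and $\xtar(G)$ is a single vertex, which is $Q_0$, matching $d=0$.

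Next I would show that no larger induced hypercube exists. Suppose $\xtar(G)$ has an induced subgraph isomorphic to $Q_t$ for some $t \geq 1$. By Lemma \ref{L:uembedQt}, $X(G) \leq n - t$, equivalently $t \leq n - X(G) = d$. Combined with the previous paragraph, this shows $d$ is exactly the maximum dimension.

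I don't anticipate any obstacle, since the lemma has been proved immediately above and packages both directions of the argument. The only thing to be careful about is handling $t = 0$ (which is excluded from the lemma's hypothesis but corresponds to the trivial $Q_0$, an isolated vertex, always present in $\xtar(G)$) and the degenerate case $X(G) = n$, both of which are consistent with the claimed formula $d = n - X(G)$.
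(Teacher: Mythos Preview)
Your proposal is correct and matches the paper's approach: the paper states this corollary without proof, treating it as an immediate consequence of Lemma~\ref{L:uembedQt}, and your argument simply spells out that optimization (including the harmless boundary cases $X(G)=n$ and $t=0$, which the paper does not bother to mention).
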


\begin{cor}\label{c:u-iso-props}Let $X$ be a property satisfying the conditions in Definition \ref{d:Xprop}.
Suppose $G$ and $H$ have no isolated vertices and $\xtar(G)\cong\xtar(H)$. Then
\ben
\item $|V(G)|=|V(H)|$.
\item $X(G)=X(H)$.
\item $\xbar(G)=\xbar(H)$. 

\een
\end{cor}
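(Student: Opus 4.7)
The plan is to read off each of the three invariants from an isomorphism-invariant numerical feature of the reconfiguration graph that we have already computed in earlier results, so the corollary follows directly from reusing Proposition \ref{x-deg-diam}, Proposition \ref{p:umindegreeTAR}, and Corollary \ref{c:X-Qt}. Concretely, I would fix an isomorphism $\xtar(G)\cong\xtar(H)$ and note that the maximum degree, minimum degree, and maximum dimension of an induced hypercube subgraph are all invariants that must therefore agree on the two sides.

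First I would prove part (1). Since $G$ and $H$ have no isolated vertices, Proposition \ref{x-deg-diam}\eqref{p-c:umaxdeg} gives $\Delta(\xtar(G))=|V(G)|$ and $\Delta(\xtar(H))=|V(H)|$. Maximum degree is preserved under graph isomorphism, so $|V(G)|=|V(H)|$; call this common value $n$.

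Next, for part (2), I would invoke Corollary \ref{c:X-Qt}: the quantity $n-X(G)$ is characterized as the largest $d$ such that $Q_d$ embeds as an induced subgraph of $\xtar(G)$, and similarly for $H$. This is purely an invariant of the abstract graph $\xtar(G)$, so the values $n-X(G)$ and $n-X(H)$ must coincide; combined with part (1), this yields $X(G)=X(H)$. Finally, for part (3), Proposition \ref{p:umindegreeTAR} gives $\delta(\xtar(G))=n-\xbar(G)$ and $\delta(\xtar(H))=n-\xbar(H)$. Since minimum degree is also an isomorphism invariant, and the orders agree by part (1), we conclude $\xbar(G)=\xbar(H)$.

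There is no real obstacle here: the work was already done in the preceding propositions, which translate the graph-theoretic parameters $|V(G)|$, $X(G)$, and $\xbar(G)$ into isomorphism-invariant data of $\xtar(G)$. The only thing to be careful about is the order in which the three parts are proved, since the characterizations of $X(G)$ and $\xbar(G)$ are stated in terms of $n$; establishing $|V(G)|=|V(H)|$ first removes that subtlety and lets the remaining two parts follow with a single sentence each.
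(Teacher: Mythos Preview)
Your proof is correct and follows exactly the same approach as the paper: using Proposition~\ref{x-deg-diam}\eqref{p-c:umaxdeg} for part (1), Corollary~\ref{c:X-Qt} for part (2), and Proposition~\ref{p:umindegreeTAR} for part (3). The paper's proof is just a terser version of what you wrote.
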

\bpf By Proposition \ref{x-deg-diam}, $|V(G)|=\Delta(\xtar(G))=\Delta(\xtar(H))=|V(H)|$. 
Corollary \ref{c:X-Qt} implies $X(G)=X(H)$. By Proposition \ref{p:umindegreeTAR}, $\xbar(G)=n-\delta(\xtar(G))=n-\delta(\xtar(H))=\xbar(H)$. 
\epf

\section{Power domination token addition and removal (TAR) graphs}\label{s:TAR}

 In this section we establish results for the $k$-token addition and removal (TAR) reconfiguration graphs for power domination.  The {\em power domination number}  $\pd(G)$ is the cardinality of a {minimum} power dominating set, and the {\em upper power domination number}  $\pdbar(G)$ is the maximum cardinality of a {minimal} power dominating set.  

\begin{defn} The {\em $k$-token addition and removal (TAR) reconfiguration graph for power domination}, denoted by $\tar_k(G)$ uses   power dominating sets of cardinality at most $k$ as the vertices; $\tar (G)=\tar_n(G)$ for a graph $G$ of order $n$.  These graphs have an edge between two vertex sets $S_1$ and $S_2$ when one can be obtained from the other by the addition or removal of a single vertex, i.e., $|S_1 \ominus S_2|=1$.  
\end{defn}

Observe that power domination satisfies the conditions in Definition \ref{d:Xprop}.

\begin{ex} If $G$ is a graph such that any one of its vertices is a power dominating set, $\tar(G)$ is isomorphic to an $n$-dimensional hypercube with one vertex deleted  because the only subset of $V(G)$ that is not a power dominating set is the empty set.  Examples of such graphs include $C_n, P_n, K_n,$ and $W_n$.
\end{ex}

\subsection{Basic properties}\label{ss:basic}


 
Remark \ref{no-isol-vtx} applies to power domination, so we focus our attention on graphs with no isolated vertices.  The next result applies Corollary \ref{c:u-iso-props} to power domination.

\begin{cor}\label{c:pd-iso-props}
Suppose $G$ and $H$ have no isolated vertices and $\tar(G)\cong\tar(H)$. Then
\ben
\item $|V(G)|=|V(H)|$.
\item $\pd(G)=\pd(H)$.
\item $\pdbar(G)=\pdbar(H)$. 
\een
\end{cor}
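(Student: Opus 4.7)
The plan is almost trivial: Corollary \ref{c:pd-iso-props} is nothing more than Corollary \ref{c:u-iso-props} specialized to the property $X = $ ``being a power dominating set'', in which case the corresponding parameters become $X(G)=\pd(G)$ and $\xbar(G)=\pdbar(G)$. The only thing that needs to be checked is that power domination fits into the universal framework, i.e., that it satisfies the four conditions of Definition \ref{d:Xprop}. But this observation is explicitly recorded in the paragraph just before the statement, so the proof should consist of citing that remark and then invoking Corollary \ref{c:u-iso-props}.

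For clarity, I would also unpack each of the three conclusions in a single sentence, directly from the universal machinery. Part (1) uses Proposition \ref{x-deg-diam}(\ref{p-c:umaxdeg}): $|V(G)| = \Delta(\tar(G)) = \Delta(\tar(H)) = |V(H)|$. Part (2) uses Corollary \ref{c:X-Qt}: if $n:=|V(G)|=|V(H)|$, then $n-\pd(G)$ equals the maximum dimension of a hypercube appearing as an induced subgraph of $\tar(G)$, which is manifestly an isomorphism invariant, giving $\pd(G)=\pd(H)$. Part (3) uses Proposition \ref{p:umindegreeTAR}: $\pdbar(G) = n-\delta(\tar(G)) = n-\delta(\tar(H)) = \pdbar(H)$.

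I do not anticipate any obstacle. The whole point of constructing the universal TAR framework in Section \ref{s:universal} was to make these specialized corollaries essentially free, and Corollary \ref{c:pd-iso-props} is exactly a prototype of the intended payoff. The only stylistic choice is whether to write a one-line citation proof or to spell out the three items individually; I would favor spelling them out since it costs almost nothing and makes the paper easier to read for someone skipping directly to the power domination section.
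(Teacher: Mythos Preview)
Your proposal is correct and matches the paper's approach exactly: the paper does not even write out a proof for this corollary, merely stating that it ``applies Corollary \ref{c:u-iso-props} to power domination,'' relying on the earlier observation that power domination satisfies Definition \ref{d:Xprop}. Your optional unpacking of the three items via Proposition \ref{x-deg-diam}(\ref{p-c:umaxdeg}), Corollary \ref{c:X-Qt}, and Proposition \ref{p:umindegreeTAR} simply reproduces the proof of Corollary \ref{c:u-iso-props} itself, so it is consistent with (if slightly more verbose than) what the paper does.
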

The next result applies Proposition  \ref{x-deg-diam} \eqref{p-c:umaxdeg} and \eqref{p-c:udist} to power domination.
\begin{cor}\label{pd-deg-dist} 
Let  $G$ be a graph on $n$ vertices with no isolated vertices. Then 
\ben\item 
$\Delta(\tar (G))=n$.
\item In $\tar(G)$, the distance between any two power dominating sets $S$ and $S'$ is exactly $|S\ominus  S'|$.
\een
\end{cor}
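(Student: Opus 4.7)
The plan is essentially to invoke the universal framework already developed in Section \ref{s:universal}, since Corollary \ref{pd-deg-dist} is explicitly flagged as an application of Proposition \ref{x-deg-diam}(\ref{p-c:umaxdeg}) and (\ref{p-c:udist}) to the particular case of power domination.

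First I would verify (or just cite, since the paper has already observed this immediately after Definition \ref{d:Xprop}) that the property ``$S$ is a power dominating set'' satisfies the four conditions in Definition \ref{d:Xprop}: monotonicity under supersets follows because if $S \subseteq S'$ then $P^i[S] \subseteq P^i[S']$ for all $i$, so if some iterate of $S$ equals $V(G)$ then the corresponding iterate of $S'$ does too; the empty set is not a power dominating set because $P^0[\emptyset] = \emptyset$ remains $\emptyset$ under the propagation rule; in a disconnected graph, propagation within one component never reaches another component, so a power dominating set decomposes as the union of power dominating sets of the components; and any set of $n-1$ vertices in a graph with no isolated vertices is dominating, hence power dominating. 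With this, $\tar(G)$ is exactly the TAR reconfiguration graph $\xtar(G)$ for $X = \mathrm{pd}$.

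Next I would apply Proposition \ref{x-deg-diam}(\ref{p-c:umaxdeg}) directly: it gives $\Delta(\xtar(G)) = n$ whenever $G$ has no isolated vertices, which translates immediately into $\Delta(\tar(G)) = n$. For the distance claim, Proposition \ref{x-deg-diam}(\ref{p-c:udist}) states $\dist_{\xtar(G)}(S,S') = |S \ominus S'|$ for any two $X$-sets $S, S'$; since every pair of power dominating sets is a pair of $X$-sets in this instantiation, the claim follows verbatim.

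There is no substantive obstacle here. The only minor point worth flagging is that the statement of Corollary \ref{pd-deg-dist} is phrased in terms of $\tar(G)$ (the full TAR graph on all power dominating sets of any cardinality), matching the definition $\xtar(G) = \xtar_n(G)$, so no passage from a $k$-TAR version to the full version is required and no additional argument is needed.
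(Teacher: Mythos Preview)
Your proposal is correct and matches the paper's approach exactly: the paper simply notes that power domination satisfies the conditions in Definition \ref{d:Xprop} and then states this corollary as a direct application of Proposition \ref{x-deg-diam}(\ref{p-c:umaxdeg}) and (\ref{p-c:udist}), without any further argument.
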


\begin{prop}\label{lem:neighbors} Let $G$ be a graph with no isolated vertices, let $S$ be a minimal power dominating set of $G$, and define $S'=V(G)\setminus S$. Then $S\subseteq N(S') $, $S'$  is a dominating set,  and $G$  contains two disjoint minimal power dominating sets.
\end{prop}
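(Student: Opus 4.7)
The plan is to prove the three claims in order: $S \subseteq N(S')$ is the main content, from which the dominating set claim is immediate, and the existence of two disjoint minimal power dominating sets follows with one further observation.

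For $S \subseteq N(S')$, I would argue by contradiction. Suppose some $v \in S$ has $N(v) \cap S' = \emptyset$, i.e., $N(v) \subseteq S$. I claim $S \setminus \{v\}$ is still a power dominating set, contradicting the minimality of $S$. This is a standard "monotonicity of propagation" argument, which I would prove by induction: for every $i \ge 0$,
\[
P^i[S] \subseteq P^{i+1}[S \setminus \{v\}].
\]
For the base case $i=0$, every element of $S \setminus \{v\}$ is in $P^1[S \setminus \{v\}]$ trivially, and $v$ lies in $P^1[S \setminus \{v\}]$ because $v$ has at least one neighbor (since $G$ has no isolated vertices) and that neighbor must lie in $N(v) \subseteq S$, hence in $S \setminus \{v\}$. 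For $i=1$, I need $N[S] \subseteq P^2[S \setminus \{v\}]$; any $u \in N(S) \setminus S$ cannot be in $N(v)$ (else $u \in N(v) \subseteq S$), so $u$ is adjacent to some vertex of $S \setminus \{v\}$ and lies in $P^1[S \setminus \{v\}]$. For $i \ge 2$, the inductive step uses the fact that if $w \in P^{i-1}[S]$ forces $u$ (so $N(w) \setminus P^{i-1}[S] = \{u\}$), then by the inductive hypothesis $w \in P^i[S \setminus \{v\}]$ and $|N(w) \setminus P^i[S \setminus \{v\}]| \le 1$; in either subcase $u \in P^{i+1}[S \setminus \{v\}]$. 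Choosing $t$ with $P^t[S] = V(G)$ yields $P^{t+1}[S \setminus \{v\}] = V(G)$, the desired contradiction.

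With $S \subseteq N(S')$ in hand, the dominating set claim is immediate: $V(G) = S \cup S' \subseteq N(S') \cup S' = N[S']$. Finally, since a dominating set is automatically a power dominating set, $S'$ is a power dominating set; removing vertices one at a time while preserving the power-domination property produces a minimal power dominating set $S'' \subseteq S'$, and then $S \cap S'' \subseteq S \cap S' = \emptyset$, so $S$ and $S''$ are the desired disjoint minimal power dominating sets.

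The main obstacle is the inductive monotonicity argument for the propagation: the domination step ($P^0 \to P^1$) behaves differently from the subsequent forcing steps ($P^{i-1} \to P^i$ for $i \ge 2$), so I will need to handle $i \le 1$ separately from $i \ge 2$, and in the base cases use the specific hypothesis $N(v) \subseteq S$ carefully to avoid a gap where some neighbor of $v$ in $S$ is "lost" when $v$ is removed.
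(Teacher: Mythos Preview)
Your proof is correct and follows the same approach as the paper: the paper's proof is a terse three-line argument that simply asserts ``since $S$ is minimal and $G$ has no isolated vertices, each vertex in $S$ must have a neighbor in $S'$,'' while you supply the full justification via the monotonicity-of-propagation induction. One minor simplification: once you establish $N[v]\subseteq S\subseteq N[S\setminus\{v\}]$, you get $P^1[S]=N[S]=N[S\setminus\{v\}]=P^1[S\setminus\{v\}]$ directly, and since $P^i$ for $i\ge 2$ depends only on $P^{i-1}$, the equality $P^i[S]=P^i[S\setminus\{v\}]$ for all $i\ge 1$ follows without the shifted-index induction.
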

\bpf  Since $S$ is minimal and $G$ has no isolated vertices, each vertex in $S$ must have a neighbor in $S'$. Thus $S'$ is a dominating set, which must contain a minimal power dominating set (that is disjoint from $S$).
\epf

We can now improve the bound on the diameter given in  Proposition \ref{x-deg-diam}\eqref{p-c:udiam-bd} for the power domination TAR reconfiguration graph.
\begin{prop}\label{lem:diameterbounds}
Let $G$ be a graph of order $n$ with no isolated vertices. Then $\diam(\tar(G))=n$.
\end{prop}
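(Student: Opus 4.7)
The plan is to combine Proposition \ref{x-deg-diam}\eqref{p-c:udiam}, which expresses $\diam(\tar(G))$ as the maximum of $|S\ominus S'|$ over all pairs of power dominating sets, with Proposition \ref{lem:neighbors}. Since the upper bound $\diam(\tar(G))\le n$ is already established in Proposition \ref{x-deg-diam}\eqref{p-c:udiam-bd}, the only thing left to do is to produce two power dominating sets whose symmetric difference is all of $V(G)$, i.e., a power dominating set $S$ whose complement $V(G)\setminus S$ is also a power dominating set.

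Concretely, I would let $S$ be any minimal power dominating set of $G$; this exists because $G$ has no isolated vertices so $V(G)$ itself is a power dominating set, and vertices can be removed one at a time until minimality is reached. Proposition \ref{lem:neighbors} then tells us that $S' := V(G)\setminus S$ is a dominating set. The key observation is that any dominating set is automatically a power dominating set, since $P^1[S']=N[S']=V(G)$. Hence both $S$ and $S'$ are vertices of $\tar(G)$, they are disjoint, and $S\cup S'=V(G)$.

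Applying Corollary \ref{pd-deg-dist} (or equivalently Proposition \ref{x-deg-diam}\eqref{p-c:udist}) we obtain
\[
\diam(\tar(G))\;\ge\;\dist_{\tar(G)}(S,S')\;=\;|S\ominus S'|\;=\;|S|+|S'|\;=\;n,
\]
which together with the upper bound gives $\diam(\tar(G))=n$.

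There is no real obstacle here; all the work has been packaged into Proposition \ref{lem:neighbors}. The only subtlety worth flagging is that we use the full strength of that proposition (the complement of a minimal power dominating set is itself a dominating set), rather than the weaker statement that the complement merely contains a minimal power dominating set, combined with the trivial but essential remark that every dominating set is a power dominating set.
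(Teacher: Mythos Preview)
Your proof is correct and follows essentially the same approach as the paper: take a minimal power dominating set $S$, invoke Proposition~\ref{lem:neighbors} to conclude that $S'=V(G)\setminus S$ is a (power) dominating set, and then use the distance formula from Proposition~\ref{x-deg-diam}\eqref{p-c:udist} to get $\dist_{\tar(G)}(S,S')=n$. The only difference is that you spell out a few details (existence of a minimal $S$, and that a dominating set is a power dominating set) that the paper leaves implicit.
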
 

\begin{proof}  Let $S$ be a minimal power dominating set of $G$.  Since $S'=V(G)\setminus S$  is a power dominating set by Proposition \ref{lem:neighbors}, the distance between $S$ and $S'$ in $\tar(G)$ is $|S\cup S'|-|S\cap S'|=n$, and thus $\diam(\tar(G))=n.$
\end{proof}

The next result applies Proposition \ref{p:umindegreeTAR} to power domination.
\begin{cor}\label{p:mindegreeTAR}
Let $G$ be a graph on $n\geq3$ vertices 
Then  $\delta(\tar(G))=n-\pdbar(G)$.
\end{cor}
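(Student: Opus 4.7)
The plan is to invoke the universal minimum-degree result, Proposition~\ref{p:umindegreeTAR}, specialized to the property ``is a power dominating set.'' The excerpt already asserts at the start of Section~\ref{s:TAR} that power domination satisfies Definition~\ref{d:Xprop}; to be concrete, condition~\eqref{x1} holds because $S \subseteq S'$ implies $P^i[S] \subseteq P^i[S']$ for every $i$, condition~\eqref{x2} holds because $P^i[\emptyset] = \emptyset$, condition~\eqref{d-c:nK1} holds because observation cannot cross between components, and condition~\eqref{x4} holds because in a graph with no isolated vertices any $(n-1)$-vertex subset is already a dominating set and hence power dominating. Identifying $\xtar(G)$ with $\tar(G)$ and $\xbar(G)$ with $\pdbar(G)$, the universal identity $\delta(\xtar(G)) = n - \xbar(G)$ becomes exactly $\delta(\tar(G)) = n - \pdbar(G)$, as claimed.

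For expository completeness I would recall how the universal argument specializes here. First, let $M$ be a minimal power dominating set of maximum cardinality $\pdbar(G)$; by minimality, removing any single vertex from $M$ destroys the power dominating property, so the neighbors of $M$ in $\tar(G)$ are precisely the $n - \pdbar(G)$ one-vertex extensions $M \cup \{v\}$ with $v \notin M$. This yields $\delta(\tar(G)) \leq n - \pdbar(G)$. For the reverse inequality, given any power dominating set $S$, choose a minimal power dominating set $M' \subseteq S$; then both the $|S| - |M'|$ deletions of vertices in $S \setminus M'$ and the $n - |S|$ additions of vertices outside $S$ produce power dominating sets, so $\deg_{\tar(G)}(S) \geq (|S| - |M'|) + (n - |S|) = n - |M'| \geq n - \pdbar(G)$.

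The main obstacle is essentially nonexistent: the entire proof reduces to verifying the four conditions of Definition~\ref{d:Xprop} for power domination, which is routine, and then applying Proposition~\ref{p:umindegreeTAR}. The only subtlety worth noting is that the argument goes through even if $G$ has isolated vertices, since an isolated vertex must lie in every power dominating set and this does not affect the neighbor counts above.
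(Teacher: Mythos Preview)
Your proposal is correct and matches the paper's approach exactly: the paper presents this as a corollary obtained by applying Proposition~\ref{p:umindegreeTAR} to power domination, having already observed that power domination satisfies Definition~\ref{d:Xprop}. Your additional verification of the four conditions and recapitulation of the universal degree argument are sound and simply make explicit what the paper leaves implicit.
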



\subsection{Connectedness}

The least $k_0$ such that $\tar_k(G)$ is connected for all $k\ge k_0$ is denoted by $\pdo(G)$, and the least $k$ such that $\tar_k(G)$ is connected is denoted by $\ulpdo(G)$.
The next two results are applications of Propositions \ref{basic1} and \ref{c:X1} to power domination.

\begin{cor}\label{basic1-pd} 
Let $G$ be a graph of order $n$ with no isolated vertices. Then $\pd(G)+1\le \ulpdo(G)\le\pdo(G)$ and $\pdbar(G)+1\le \pdo(G)\le\min\{\pdbar(G)+\pd(G),n\}.$  
\end{cor}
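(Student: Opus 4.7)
The plan is to apply Proposition \ref{basic1} directly, after confirming that power domination satisfies the four conditions of Definition \ref{d:Xprop}. The paper already asserts this in the bullet list following the definition, but I would verify it explicitly: condition (1) follows from monotonicity of the closure operator $P^t$ (adding vertices to a set can only expand the set of vertices observed at every stage); condition (2) holds because $P^t[\emptyset] = \emptyset$ for all $t$; condition (3) follows because if $u$ and $w$ lie in different components of $G$ then no propagation step can observe $w$ from $u$, so the observation process decomposes over components; and condition (4) holds because in a graph with no isolated vertices any set of $n-1$ vertices is already a dominating set, hence a PDS.

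Having checked these, I would make the identifications $X(G) = \pd(G)$, $\xbar(G) = \pdbar(G)$, $\xxo(G) = \pdo(G)$, and $\ulxo(G) = \ulpdo(G)$, and then substitute into the conclusions of Proposition \ref{basic1}. The inequalities $\pdbar(G)+1 \le \pdo(G) \le \min\{\pdbar(G)+\pd(G),n\}$ and $\pd(G) \le \ulpdo(G) \le \pdo(G)$ come out immediately.

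The remaining piece is the strict lower bound $\pd(G)+1 \le \ulpdo(G)$. For this I would use the conditional clause of Proposition \ref{basic1}, which upgrades $\pd(G) \le \ulpdo(G)$ to $\pd(G)+1 \le \ulpdo(G)$ as soon as $G$ has more than one minimum power dominating set. The main obstacle is therefore verifying this extra hypothesis. The natural tool is Proposition \ref{lem:neighbors}: for any minimal PDS $S$, the complement $V(G)\setminus S$ is a dominating set and hence contains a minimal PDS $T$ disjoint from $S$. To conclude, I would need either to promote such a $T$ to a minimum PDS, or to exhibit by a separate structural argument a second set of cardinality $\pd(G)$ that is a PDS. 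This is the delicate step where the universal framework from Section \ref{s:universal} is not quite enough on its own, and where a proof specific to power domination is required.
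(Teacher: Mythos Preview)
Your overall approach---invoke Proposition~\ref{basic1} after checking that power domination satisfies Definition~\ref{d:Xprop}---is exactly what the paper does; the corollary is stated as a direct application of Proposition~\ref{basic1} with no separate argument.

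You are right to flag the derivation of $\pd(G)+1\le \ulpdo(G)$ as the delicate point, and in fact your caution is more warranted than you realize. Proposition~\ref{basic1} only yields $X(G)\le \ulxo(G)$ in general, upgrading to $X(G)+1\le \ulxo(G)$ under the additional hypothesis that $G$ has more than one minimum $X$-set. That hypothesis does \emph{not} hold for all graphs without isolated vertices in the power domination setting: for $n\ge 4$ the star $K_{1,n-1}$ has the center as its unique minimum power dominating set, so $\tar_{1}(K_{1,n-1})$ is a single vertex and $\ulpdo(K_{1,n-1})=1=\pd(K_{1,n-1})$. Thus the inequality $\pd(G)+1\le \ulpdo(G)$ fails here. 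Proposition~\ref{lem:neighbors} gives two disjoint \emph{minimal} power dominating sets, but not two \emph{minimum} ones, so it cannot rescue the argument.

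In short, your plan reproduces the paper's intended proof, and the ``delicate step'' you isolated is not merely delicate but actually unprovable in full generality: the corollary as stated is slightly too strong, and Proposition~\ref{basic1} only supports $\pd(G)\le \ulpdo(G)\le\pdo(G)$ together with the second chain of inequalities.
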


\begin{cor}\label{c:pd1}
Let $G$ be a graph with no isolated vertices. If $\pd(G)=1$, then $\pdo(G)=\pdbar(G)+1$ . 
\end{cor}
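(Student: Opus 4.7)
The plan is to obtain Corollary \ref{c:pd1} as an immediate specialization of Corollary \ref{c:X1}, which is itself the $X(G)=1$ case of Proposition \ref{basic1}. The paper already notes, just after the definition of $\tar_k(G)$ in Section \ref{s:TAR}, that power domination satisfies the four conditions of Definition \ref{d:Xprop}, so every universal TAR result in Section \ref{s:universal} applies with $X$ taken to be power domination. In particular, Corollary \ref{c:X1} instantiates to Corollary \ref{c:pd1} verbatim after relabeling $X(G)\mapsto \pd(G)$, $\xbar(G)\mapsto\pdbar(G)$, and $\xxo(G)\mapsto\pdo(G)$.

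The verification proceeds in two steps. First, invoke Corollary \ref{basic1-pd} (the power-domination form of Proposition \ref{basic1}), which gives
\[
\pdbar(G)+1 \le \pdo(G)\le \min\{\pdbar(G)+\pd(G),\,n\}.
\]
Substituting $\pd(G)=1$, the upper bound becomes $\min\{\pdbar(G)+1,n\}$. Second, argue that this minimum equals $\pdbar(G)+1$. Here one uses that $G$ has no isolated vertices, so by Definition \ref{d:Xprop}\eqref{x4} every $(n-1)$-subset of $V(G)$ is a power dominating set. Consequently $V(G)$ itself cannot be minimal (any single-vertex deletion still yields an $X$-set), so $\pdbar(G)\le n-1$ and thus $\pdbar(G)+1\le n$. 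The upper and lower bounds collapse, yielding $\pdo(G)=\pdbar(G)+1$.

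There is no real obstacle; the content has already been done universally in Proposition \ref{basic1} and Corollary \ref{c:X1}. The only thing worth flagging is that the hypothesis ``no isolated vertices'' is what kills the $\min$, via condition \eqref{x4} of Definition \ref{d:Xprop}. Because of this, the cleanest write-up is a one-sentence proof citing Corollary \ref{c:X1} together with the observation (already made in Section \ref{s:TAR}) that power domination satisfies Definition \ref{d:Xprop}.
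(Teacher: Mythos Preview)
Your proposal is correct and matches the paper's approach exactly: the paper presents Corollary~\ref{c:pd1} as a direct application of Corollary~\ref{c:X1} (equivalently, of Proposition~\ref{basic1} with $X(G)=1$) to power domination, relying on the already-noted fact that power domination satisfies Definition~\ref{d:Xprop}. Your additional remark that the ``no isolated vertices'' hypothesis ensures $\pdbar(G)+1\le n$ via condition~\eqref{x4} is the right way to collapse the $\min$, and is implicit in the paper's treatment of Corollary~\ref{c:X1}.
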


There are many examples that achieve the lower bound $\pdo(G)=\pdbar(G)+1$, including when   $\pdbar(G)+1< \min\{\pdbar(G)+\pd(G),n\}.$  Here we give one example.



\begin{ex}\label{TAR:compbipart} Let $b\ge a\ge 3$.  Then $\pd(K_{a,b})=2$ and $\pdbar(K_{a,b})=b-1\ge 2$, since there are three types of minimal power dominating sets of $K_{a,b}$: any subset of $a-1$ vertices from the partite set $A$ of size $a$; any subset of $b-1$ vertices from the partite set $B$ of size $b$; and any subset consisting of one vertex from $A$ and one from $B$.  
To see that  $\pdo(K_{a,b})=b=\pdbar(K_{a,b})+1$, which is less than the upper bound of $\pdbar(K_{a,b})+\pd(K_{a,b}) = \pdbar(K_{a,b})+2$, it suffices to show that there is a path in $\tar_b (K_{a,b})$ from any minimum power dominating set to any minimal power dominating set.  Let $S$ be a minimum power dominating set.  Thus $S = \{v,w\}$ for some $v \in A$ and $w \in B$.  Let $M$ be a minimal power dominating set. If $S\cap M\ne \emptyset$, then $|S\cup M|\le b$ and thus there is a path in $\tar_b (K_{a,b})$ from $S$ to $M$. So assume that $M \cap S = \emptyset$. For each of the three types of minimal power dominating sets, we give a sequence of adjacent vertices in $\tar_b (K_{a,b})$ that defines a path from $S$ to $M$.  \\[1mm]
 \underline{Case 1:  $M = \{\hat{v},\hat{w}\}$:}\vspace{-6pt}
 \[S_0=S, \  S_1 =\{v,w,\hat v\}= S_0 \cup \{\hat{v}\},  \ S_2 =\{w,\hat v\} = S_1 \setminus \{v\}, \  S_3=\{w,\hat v,\hat w\}=S_2 \cup \{\hat{w}\}, \  S'''\setminus \{w\} = M.\vspace{-6pt}\]
 Since each subset $S_i$ of $V(K_{a,b})$ above has at most 3 vertices,  each subset is a vertex in $\tar_b (K_{a,b})$. \\[1mm]
  \underline{Case 2:  $M = \{v_1, \ldots, v_{a-1}\} \subset A$:}\vspace{-4pt}
 \[S_0=S, \ S_1 = S_0 \cup \{v_1\}, \ S_2 = S_1 \setminus \{v\}, \ S_{i} = S_{i-1} \cup \{v_{i-1}\} \mbox{ for } i=3, \ldots, a, \  S_{a+1}=S_{a} \setminus \{w\} = M.\vspace{-6pt}\]   
 In other words, we first add a vertex from $M$ to $S$, then remove $v$, then add the remaining vertices in $M$ one at a time to $S$, and finally remove $w$ to reach $M$.  Each subset of vertices we create in this way has no more than $a$ vertices, and thus each subset is a vertex in $\tar_b (K_{a,b})$. \\[1mm]
   \underline{Case 3:  $M = \{w_1, \ldots, w_{b-1}\} \subset B$:} This case is similar to Case 2. 
\end{ex}

Next we present a family of of graphs that realize the upper bound $\pdo(G)\le\min\{\pdbar(G)+\pd(G), n\}$ and have $\pdbar(G)+1<\min\{\pdbar(G)+\pd(G), n\}$.

\begin{defn}\label{e:bigupper}
For any integer $n\geq 3$ let $G_n=(V_n,E_n)$ be the graph defined by
\begin{itemize}
\item $V_n=T_n\cup \big( \cup_{i=1}^n S_{n,i}\big)$ where $T_n=\{u_1,\ldots, u_{n-1}\}$ and $S_{n,i}=\{v^i_1,\ldots , v^i_n\}$, for every integer $i$,  $1\leq i\leq n$.
\item For integers $i=1,\ldots, n$ and $j=1,\ldots , n-1$: \hskip 0.5cm $N_{G_n}(v^i_j)=\{u_j\}\cup \{v^i_p: 1\leq p\leq n, p\not=j\}$
\subitem \hskip 6.9cm $N_{G_n}(v^i_n)=\{v^i_1,\ldots , v^i_{n-1}\}$
\subitem \hskip 6.9cm $N_{G_n}(u_j)=\{v^r_j: 1\leq r\leq n\}$
\end{itemize}
\end{defn}

The graph $G_4$ is shown in Figure \ref{fig:TARdisconnect}.

\begin{figure}[h!]
    \centering
  
        \scalebox{.4}{\includegraphics{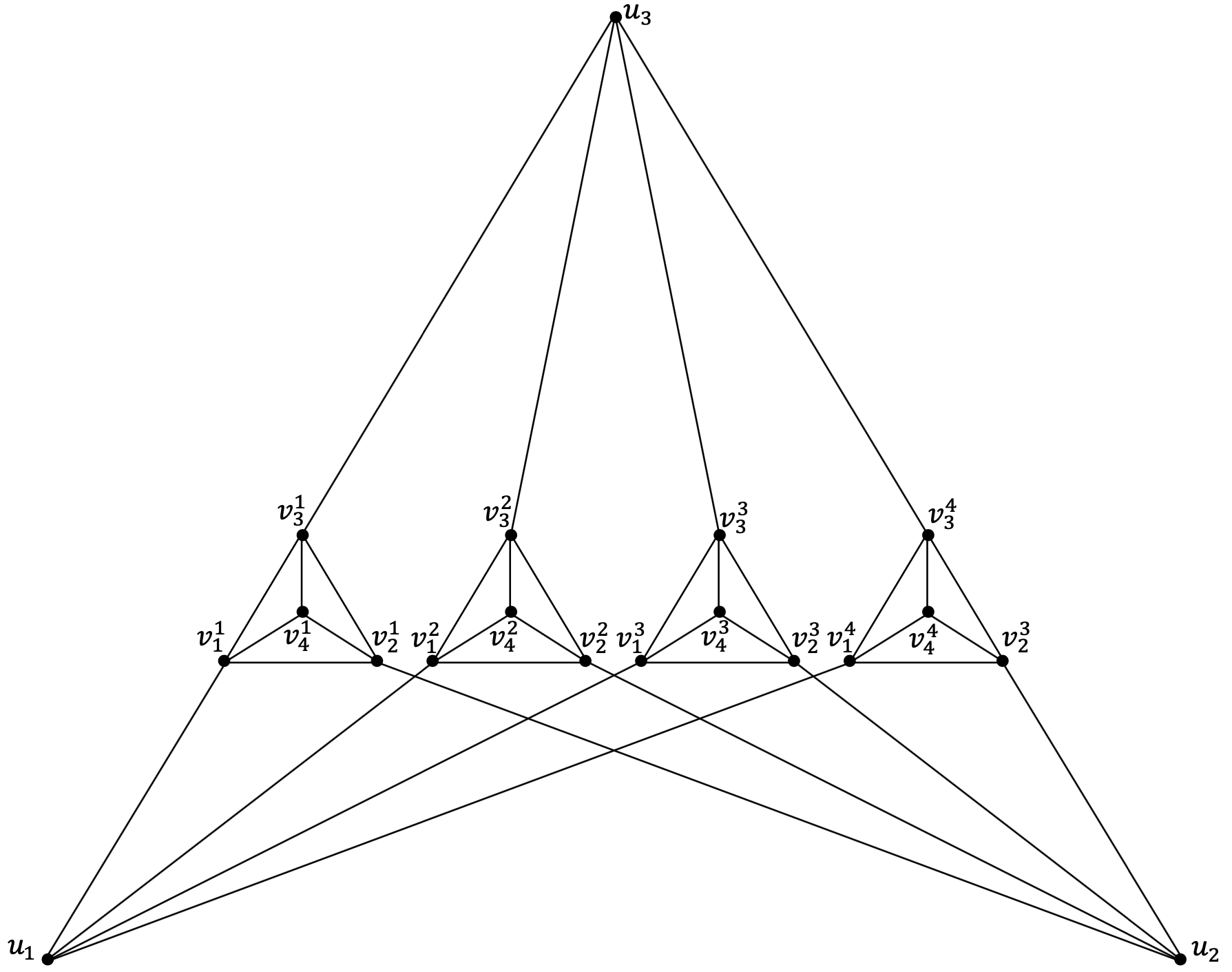}}

    \caption{The graph $G_4$}  \label{fig:TARdisconnect}
\end{figure}

\begin{thm}\label{t:TARdisconnect}
For $n\ge 3$, the graph $G_n$ has $\pdbar(G_n)=\pd(G_n)=n-1$ and $\ulpdo(G_n)=\pdo(G_n)=2n-2>\pdbar(G_n)+1=\pd(G_n)+1$.
\end{thm}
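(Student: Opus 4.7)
The plan is to prove both equalities by first establishing a structural characterization of the power dominating sets of $G_n$, and then using it in three ways: to compute $\pd(G_n)$ and $\pdbar(G_n)$, and to force any path between two disjoint minimum PDSs in $\tar(G_n)$ to pass through a set of size at least $2n-2$. For $S\subseteq V(G_n)$, write $I(S)=\{i:S\cap S_{n,i}\ne\emptyset\}$ and $J(S)=\{j:u_j\in S\}$, so that $|S|\ge|J(S)|+|I(S)|$.

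The central lemma is that $S$ is a power dominating set of $G_n$ if and only if either $T_n\subseteq S$ (equivalently $|J|=n-1$) or $|I^c|\le 1$. For sufficiency, if $T_n\subseteq S$ then every $v^i_j$ with $j<n$ lies in $P^1[S]$ and forces $v^i_n$ in round $2$; if $|I^c|\le 1$, then the cliques indexed by $I$ are fully observed in round $1$, each missing $u_j$ is then forced in round $2$ from within a clique of $I$, and $u_j$ subsequently forces the unique unobserved vertex $v^{i_0}_j$ in the sole clique of $I^c$ (if any), which then propagates through the clique. For necessity, suppose $|J|\le n-2$ and $|I^c|\ge 2$. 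For any $i\in I^c$, the only candidate forcers into $S_{n,i}$ are (a) an in-clique vertex $v^i_j$ with $j\in J$, whose unobserved neighbors in $S_{n,i}$ number $n-|J|\ge 2$, or (b) an external vertex $u_j$ for some $j\notin J$, whose unobserved neighbors form the set $\{v^r_j:r\in I^c\}$, of size $|I^c|\ge 2$; neither can force, so $S$ is not a PDS (the degenerate case $I=\emptyset$, $S\subsetneq T_n$, fails analogously).

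From this lemma $\pd(G_n)=n-1$ is immediate since both branches require $|S|\ge n-1$, realized by $T_n$. For $\pdbar(G_n)=n-1$, any PDS with $|S|\ge n$ has a proper PDS subset: if $T_n\subseteq S$ delete any vertex of $S\setminus T_n$; otherwise $|I^c|\le 1$, and either $|J|\ge 1$ (delete some $u_{j^*}\in S$) or $|J|=0$ (in which case $|I|\ge n-1$ and $|S|\ge n$ force some clique to contain two $S$-vertices, one of which can be deleted). Each deletion preserves the characterization, so the resulting set is again a PDS.

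For the reconfiguration bound, take the minimum PDSs $M_1=T_n$ and $M_2=\{v^1_1,v^2_1,\dots,v^{n-1}_1\}$ (the latter has $I^c=\{n\}$ and hence is a PDS by the lemma), with $M_1\cap M_2=\emptyset$. In any path in $\tar(G_n)$ from $M_1$ to $M_2$, some $u_j$ must eventually be removed; let $S$ denote the set just prior to the first such removal, so $T_n\subseteq S$ and $S\setminus\{u_j\}$ is a PDS with $|J|=n-2$. Applying the lemma to $S\setminus\{u_j\}$ forces $|I^c(S\setminus\{u_j\})|\le 1$, hence $|I(S)|\ge n-1$, and since $T_n$ is disjoint from $\cup_i S_{n,i}$, we obtain $|S|\ge(n-1)+(n-1)=2n-2$. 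Therefore $\tar_k(G_n)$ separates $M_1$ and $M_2$ for every $k\le 2n-3$, giving $\ulpdo(G_n)\ge 2n-2$. Combined with the universal bound $\pdo(G_n)\le\pdbar(G_n)+\pd(G_n)=2n-2$ from Corollary~\ref{basic1-pd}, we conclude $\ulpdo(G_n)=\pdo(G_n)=2n-2$, and the strict inequality $2n-2>n=\pdbar(G_n)+1$ holds for $n\ge 3$. The main obstacle is the necessity direction of the central lemma, which requires carefully tracking several rounds of propagation to rule out every possible forcing into a clique of $I^c$.
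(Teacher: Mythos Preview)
Your proof is correct and follows the same approach as the paper: both establish the characterization that $S$ is a power dominating set of $G_n$ if and only if $T_n\subseteq S$ or $|I^c(S)|\le 1$, deduce $\pd=\pdbar=n-1$ from it, and then argue that any TAR path from $T_n$ to a disjoint minimum power dominating set must pass through a set of size at least $2n-2$. Your reconfiguration argument (tracking the first removal of some $u_j$ and invoking the lemma on $S\setminus\{u_j\}$) is in fact more explicit than the paper's one-line claim that the path ``must contain $T_n\cup A$''.

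There is one small slip in your $\pdbar$ case analysis. In the sub-case $T_n\not\subseteq S$, $|J|=0$, $|S|\ge n$, you assert that $|I|\ge n-1$ together with $|S|\ge n$ forces some clique $S_{n,i}$ to contain two vertices of $S$. This fails when $|I|=n$ and $|S|=n$ with exactly one $S$-vertex per clique. The repair is immediate: in that case delete any single vertex of $S$; the resulting set has $|I|=n-1$, hence $|I^c|=1$, and is still a power dominating set by your lemma, so $S$ is not minimal. With this one-line patch your argument is complete.
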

\bpf 
We begin by establishing that each of the following is a power dominating set: $T_n$, and any set consisting of exactly one vertex from all but one of the sets $S_{n,i}$. For $T_n$, note that  $N_{G_n}[T_n]=V_n\setminus \{v^i_n: 1\leq i\leq n\}$ by the definition of $G_n$. Thus $\{v^i_n: 1\leq i\leq n\}$ are observed in the second round, and it follows that $T_n$ is a power dominating set of $G_n$. Let  $A\subset \cup_{i=1}^n S_{n,i}$  such that $|A|=n-1$ and  $|A\cap S_{n,i}|\leq 1$ for any $i=1,\dots ,n$.
The conditions imposed on $A$ imply it contains one and only one vertex in $S_{n,i}$, for all $i \in \{1,\ldots, n\}$ with the exception of one particular integer, say $p$. This means that  $V_n\setminus N_G[A]\subseteq T_n \cup S_{n,p}$.  Since $n\geq 3$, there exists $i$ such that all vertices in $S_{n,i}$ are observed in round $1$, and since each vertex in $S_{n,i}$ has at most one neighbor in $T_n$, any vertex in $T_n$ not observed in round $1$ is observed in round $2$. Then, after two propagation rounds we can assume the only unobserved vertices of $G_n$ are those in $S_{n,p}$.  Since all vertices in $T_n$ are observed after round 2, and each of them has one and only one neighbor in $S_{n,p}$,  $v^p_j$ is observed after round  $3$ for $j=1,\dots,n-1$. As a result, after round $3$ there is exactly one non-observed vertex, $v^p_n$, which is observed in the following propagation round. Hence, $A$ is a power dominating set. 

Let $U\subseteq V(G_n)$ such that $U\cap S_{n,i}=\emptyset$, $U\cap S_{n,j}=\emptyset$, $i\ne j$, and $T_n\not\subseteq U$.  We show that $U$ is not a power dominating set: Since $T_n\not\subseteq U$, there is a $k$ such that $u_k\not\in U$. So after the first round, $v_k^i, v_n^i,v_k^j$ and $v_n^j$ are all unobserved.  Every observed vertex in $S_{n,i}$ is adjacent to both $v_k^i$ and $v_n^i$, and similarly for $j$.  Furthermore, $u_k$ is adjacent to both  $v_k^i$ and $v_k^j$, 
and every other vertex of $T_n$ is adjacent to neither  $v_k^i$ nor $v_k^j$. 
Thus $v_k^i, v_n^i,v_k^j$ and $v_n^j$ are never observed. 
This implies a set $R\subseteq V$ is a power dominating set if and only if $R$ contains $T_n$ or at least one vertex from all but one of the sets  $S_{n,i}$.
Therefore, $\pd(G_n)=\pdbar(G_n)=n-1$.

By Corollary \ref{basic1-pd}, $\ulpdo(G)\le \pdo(G)\le \pdbar(G)+\pd(G)=2n-2$.  To see that $\ulpdo(G)\ge 2n-2$, it is sufficient to observe that any path between $T_n$ and $A=\{v_1^1,v_1^2,\dots,v_1^{n-1}\}$ must contain $T_n\cup A$.
\epf


\subsection{Uniqueness}
We now consider graphs which are characterized by their reconfiguration graph. Since for any graph $G$ adding isolated vertices does not change $\tar(G)$, we consider only graphs with no isolated vertices. For a graph $G$ with no isolated vertices, we say $\tar(G)$ is {\it unique} if for any graph $H$ with no isolated vertices,  $\tar(G)\cong\tar(H)$ implies $H\cong G$. 

\begin{prop}\label{p:upperpdconnected}
Let $G$ be a connected graph on $n\geq4$ vertices. Then $\pdbar(G)\leq n-2$. Furthermore, $\pdbar(G)=n-2$ if and only if $G=K_{1,n-1}$.
\end{prop}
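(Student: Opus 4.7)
The plan proceeds in three steps: prove the inequality $\pdbar(G)\le n-2$ by ruling out minimal power dominating sets (PDSs) of size $n$ or $n-1$; verify $\pdbar(K_{1,n-1})=n-2$ directly; and then establish the converse by analyzing the obstructions to minimality of a size-$(n-2)$ PDS.

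For the inequality, $V(G)$ is never a minimal PDS since connectedness gives every vertex a neighbor, making $V(G)\setminus\{v\}$ dominating. To rule out minimal PDSs of size $n-1$, I will show $V(G)\setminus\{u,v\}$ is a PDS for every pair $u,v$. If both $u$ and $v$ have a neighbor outside $\{u,v\}$, the set is already dominating; otherwise, without loss of generality $N(u)\subseteq\{v\}$, and since $n\ge 4$ and $G$ connected imply $\{u,v\}$ is not a component, $v$ has a neighbor outside $\{u,v\}$, so $v\in P^1[V(G)\setminus\{u,v\}]$, after which $v$ observes its unique remaining unobserved neighbor $u$ in round~$2$. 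For $(\Leftarrow)$ in the characterization: in $K_{1,n-1}$ a set of leaves $L$ is a PDS iff $|L|\ge n-2$ (otherwise the center has at least two unobserved leaves and cannot propagate), so the minimal PDSs of cardinality greater than $1$ are precisely the $(n-2)$-subsets of leaves, giving $\pdbar(K_{1,n-1})=n-2$.

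For the $(\Rightarrow)$ direction, let $S$ be a minimal PDS of size $n-2$ with $V\setminus S=\{u,v\}$. For each $x\in S$, the set $T_x:=S\setminus\{x\}$ is not a PDS, so propagation from $T_x$ stalls. The vertices unobserved after round~$1$ form $U_1(x)=\{y\in\{u,v,x\}:N(y)\subseteq\{u,v,x\}\}$; since $|U_1(x)|=3$ would disconnect $G$ and $|U_1(x)|=1$ allows immediate propagation in round~$2$, we must have $|U_1(x)|=2$. Stalling permanently then requires every $w\in V\setminus U_1(x)$ to satisfy $|N(w)\cap U_1(x)|\in\{0,2\}$, which is equivalent to the two vertices of $U_1(x)$ being twins in $G$ (open twins if non-adjacent, closed twins if adjacent).

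The main obstacle is combining these per-$x$ twin conditions across $x\in S$. First, if $U_1(x)=U_1(x')=\{u,v\}$ for distinct $x,x'$ then $N(u),N(v)\subseteq\{u,v\}$ and $\{u,v\}$ forms a component, contradicting $n\ge 4$ and connectedness. Second, any closed-twin sub-case pins the neighborhoods of $u,v,x$ down to a specific triangle $\{u,v,x\}$ with two degree-$2$ vertices, which is incompatible with any case applied to any other $x'\in S$. Third, mixing the remaining open-twin cases $U_1(x)=\{u,x\}$ (which forces $N(u)=N(x)=\{v\}$) with $U_1(x')=\{v,x'\}$ (which forces $N(v)=N(x')=\{u\}$) again produces a $K_2$-component. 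The only surviving possibility is that every $x\in S$ is in the same open-twin case, say $U_1(x)=\{u,x\}$ with $N(u)=N(x)=\{v\}$ for all $x\in S$; then $v$ is adjacent to $u$ and to every $x\in S$, while $u$ and each $x\in S$ has $v$ as its unique neighbor, so $G\cong K_{1,n-1}$.
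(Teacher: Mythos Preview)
Your proof is essentially correct and takes a genuinely different route from the paper.  The paper argues the characterization by a direct case analysis on the two vertices $u,v$ outside $S$: whether $uv\in E(G)$, whether $u$ and $v$ share neighbors in $S$, and so on, eliminating possibilities until only the star remains.  You instead exploit minimality \emph{uniformly over each} $x\in S$: since $S\setminus\{x\}$ must fail to power dominate, you identify the two ``stuck'' vertices $U_1(x)$ and extract a twin condition on them, then combine these conditions across different $x$.  Your approach is more conceptual and makes the obstruction transparent (the twin structure), while the paper's is more ad~hoc but entirely self-contained.

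There is one small gap in your case analysis.  Your ``First'' step only rules out $U_1(x)=U_1(x')=\{u,v\}$ for \emph{two} distinct $x,x'$; it does not exclude a single $x_0\in S$ with $U_1(x_0)=\{u,v\}$ and open-twin structure.  Your ``Third'' step then treats ``the remaining open-twin cases'' as only $\{u,x\}$ and $\{v,x\}$, silently dropping this possibility.  The fix is immediate: if $U_1(x_0)=\{u,v\}$ with $u,v$ open twins, then $N(u)=N(v)=\{x_0\}$, so for any other $x'\in S$ (which exists since $|S|=n-2\ge 2$) neither $u$ nor $v$ can lie in $U_1(x')$, forcing $|U_1(x')|\le 1$, a contradiction.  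Alternatively, observe that $N(u)=\{x_0\}$ is incompatible with the conclusion $N(u)=\{v\}$ forced by any $x'$ in the $\{u,x'\}$ case.  With this sub-case inserted, your argument is complete.
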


\bpf
Since $G$ has no isolated vertices, $\pdbar(G)\leq n-1$ by Definition \ref{d:Xprop}. 
If $\pdbar(G)=n-1$, choose a minimal  power dominating set $S$ of order $n-1$. Let $w$ be the vertex not in $S$. Assume $w$ is observed by vertex $v\in S$. If $w$ or $v$ had an additional neighbor $y$, then $S-y$ would power dominate, contradicting the minimality of $S$. The edge $vw$ is a connected component of $G$, hence $G$ is not connected. Therefore, $\pdbar(G)\leq n-2$ if $G$ is connected.

Let $S$ be a power dominating set with $|S|=n-2$ and let $v$ and $w$ be the vertices not in $S$. First suppose that $v$ and $w$ are not adjacent. If they share a neighbor $x \in N(v)\cap N(w)$, then there must exist another vertex $y \in V(G)$ connected to at least one of $v$, $w$ or $x$, so $S\setminus\{y\}$ is a power dominating set and $S$ is not minimal.  The other possibility is the $v$ and $w$ have no common neighbors. Suppose $x \in N(v)$ and $y \in N(w)$. If $n=4$, $G$ must be the path $P_4$ with leaves $v$ and $w$, but then $S=\{x,y\}$ is not minimal. In the case $n\geq5$, at least one of $\{v,w,x,y\}$ has an extra neighbor $u$, and again $S\setminus\{u\}$  is a power dominating set and $S$ is not minimal.  We have shown that if $S$ is minimal then  the vertices $v$ and $w$ not in $S$ are adjacent.

Now consider the case in which  $v$ and $w$ are adjacent and $z \in N(v) \cap N(w)$. There is at least one vertex $y \in S$ adjacent to $v$, $w$ or $z$. Then, $z$ dominates $v$ and $w$, and any neighbor of $y$ will observe $y$, so $S\setminus\{y\}$ is a  power dominating set and  $S$ is not not minimal. 

Now assume $S$ is minimal, so $v$ and $w$ are adjacent and $ N(v) \cap N(w)=\emptyset$.  Assume without loss of generality that $v$ has a neighbor $x$. If $w$ had a neighbor $y$, then $S\setminus\{y\}$ is a  power dominating set and  $S$ would not be not minimal.  So  $w$ is a leaf in $G$.   If $x$ had another neighbor besides $v$, it would be unnecessary to include it in $S$, so $x$ is also a leaf in $G$. Therefore, $v$ has at least one extra neighbor, $y$. Obviously, it is necessary for $N(v)\setminus \{w\}$ to be observed for $v$ to observe $w$. However, if $y$ were adjacent to $z$, then $z$ could observe $y$ (whether $z$ is adjacent to $v$ or not),  and thus $S\setminus\{y\}$ would be a power dominating set, contradicting the minimality of $S$. Therefore, the neighbors of $v$ must be leaves in $G$, and hence $G=K_{1,n-1}$. 
\epf

Applying Lemma \ref{L:uembedQt} and Corollary \ref{c:X-Qt} to power domination   yields the next two results.


\begin{cor}\label{L:embedQt}
Let $G$ be a graph  on $n\geq3$ vertices and let $2\leq t \leq n-1$. Then $\tar(G)$ has an induced subgraph isomorphic to the hypercube $Q_t$ if and only if $\pd(G)\leq n-t$. 
\end{cor}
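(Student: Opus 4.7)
The plan is to invoke Lemma \ref{L:uembedQt} directly with $X$ taken to be the power domination property. First I would reconfirm (or simply cite) that power domination satisfies the four conditions in Definition \ref{d:Xprop}. Condition \eqref{x1} holds because adding vertices to a power dominating set $S$ only enlarges $P^i[S]$ at every round, so any superset of a power dominating set is again a power dominating set; condition \eqref{x2} is clear since $P^0[\emptyset]=\emptyset\neq V(G)$; condition \eqref{d-c:nK1} holds because the definition of $P^i[S]$ is local to each connected component; and condition \eqref{x4} holds because, in a graph with no isolated vertices, any set of $n-1$ vertices has the remaining vertex as a neighbor, hence dominates $V(G)$ in one round. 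This was in fact already remarked in the text just after Definition \ref{d:Xprop}.

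With the conditions verified, specializing $X(G)=\pd(G)$ and $\xtar(G)=\tar(G)$ in Lemma \ref{L:uembedQt} yields exactly the desired biconditional: $\pd(G)\leq n-t$ if and only if $\tar(G)$ contains an induced copy of $Q_t$. The hypothesis range of the corollary, namely $n\geq 3$ and $2\leq t\leq n-1$, is strictly contained in the hypothesis range $n\geq 2$ and $1\leq t\leq n-1$ of the lemma, so no additional verification is required.

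There is no real obstacle here, since the corollary is a direct instantiation of the universal framework developed in Section \ref{s:universal}. If I wanted to make the proof self-contained rather than citing Lemma \ref{L:uembedQt}, I would reproduce its two short arguments in the power domination language: for the forward direction, take a minimum power dominating set $S$ and observe that, by condition \eqref{x1}, every set of the form $S\cup W'$ with $W'\subseteq V(G)\setminus S$ is a power dominating set, and these sets span an induced $Q_{n-\pd(G)}$ in $\tar(G)$, which contains an induced $Q_t$ for each $t\leq n-\pd(G)$; for the reverse direction, from an induced $Q_t$ in $\tar(G)$ pick the vertex $S$ of smallest cardinality, and note its $t$ neighbors in the cube must all be obtained from $S$ by adding a vertex, forcing $|V(G)\setminus S|\geq t$ and hence $\pd(G)\leq |S|\leq n-t$.
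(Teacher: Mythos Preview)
Your proposal is correct and matches the paper's approach exactly: the corollary is stated immediately after the sentence ``Applying Lemma \ref{L:uembedQt} and Corollary \ref{c:X-Qt} to power domination yields the next two results,'' so the paper's proof is precisely the direct instantiation of Lemma \ref{L:uembedQt} with $X$ taken to be power domination. Your optional self-contained version also mirrors the proof of Lemma \ref{L:uembedQt} verbatim.
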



\begin{cor}\label{c:pdequal1}
For a graph $G$ on $n\geq3$ vertices, $\pd(G)=1$ if and only if $\tar(G)$ has an induced subgraph isomorphic to $Q_{n-1}$. 
\end{cor}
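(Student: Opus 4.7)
The plan is to derive this as the extremal case $t = n-1$ of Corollary \ref{L:embedQt}. First I would verify admissibility: the hypothesis $n \geq 3$ guarantees $n - 1 \geq 2$, so the value $t = n-1$ lies in the permitted range $2 \leq t \leq n-1$ required by Corollary \ref{L:embedQt}. Substituting this value, Corollary \ref{L:embedQt} yields that $\tar(G)$ has an induced subgraph isomorphic to $Q_{n-1}$ if and only if $\pd(G) \leq n - (n-1) = 1$.

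It then remains to rule out the degenerate possibility $\pd(G) = 0$. Since power domination satisfies the conditions in Definition \ref{d:Xprop}, and Definition \ref{d:Xprop}(2) states that $\emptyset$ is not an $X$-set, every power dominating set has at least one vertex. Hence $\pd(G) \geq 1$ for every graph, and the inequality $\pd(G) \leq 1$ is equivalent to $\pd(G) = 1$.

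There is no real obstacle here: the result is essentially a one-line substitution into an already-established corollary, with the only care being to confirm that $t = n-1$ is within the allowed range and to exclude the vacuous case $\pd(G)=0$.
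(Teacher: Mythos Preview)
Your proposal is correct and matches the paper's approach: the paper states that Corollary~\ref{c:pdequal1} (along with Corollary~\ref{L:embedQt}) follows by applying the universal hypercube results (Lemma~\ref{L:uembedQt} and Corollary~\ref{c:X-Qt}) to power domination, and your derivation via the $t=n-1$ case of Corollary~\ref{L:embedQt} together with $\pd(G)\ge 1$ is exactly this specialization made explicit.
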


\begin{thm}\label{upperPDn-3} Let $G$ be a graph with  no isolated vertices, $r\ge 3$, and  $\tar(K_{1,r})\cong\tar(G)$. Then $G\cong K_{1,r}$.
\end{thm}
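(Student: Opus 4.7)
The plan is to exploit the isomorphism invariants established in Corollary \ref{c:pd-iso-props} together with the characterization of connected graphs achieving $\pdbar(G)=n-2$ given in Proposition \ref{p:upperpdconnected}. If I can show that $G$ must be connected and have $\pdbar(G)=n-2$ where $n=|V(G)|=r+1$, then the desired conclusion follows immediately.

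First I would pin down the relevant parameters of $K_{1,r}$. Clearly $|V(K_{1,r})|=r+1$, and the center is a power dominating set so $\pd(K_{1,r})=1$. For the upper power domination number, I would verify that any set of $r-1$ leaves is a minimal power dominating set when $r\ge 3$: the center is dominated and has exactly one unobserved neighbor (the omitted leaf), so propagation completes observation; on the other hand, removing a further leaf leaves the center with $r-2\ge 2$ unobserved neighbors, so propagation fails. This gives $\pdbar(K_{1,r})\ge r-1$, while Proposition \ref{p:upperpdconnected} gives the matching upper bound $\pdbar(K_{1,r})\le (r+1)-2=r-1$.

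Next I would apply Corollary \ref{c:pd-iso-props} to the hypothesis $\tar(K_{1,r})\cong\tar(G)$ to conclude that $|V(G)|=r+1$, $\pd(G)=1$, and $\pdbar(G)=r-1=n-2$. The crucial remaining step is to show that $G$ is connected, so that Proposition \ref{p:upperpdconnected} applies. Here I would argue that since $G$ has no isolated vertices, every connected component contributes at least one vertex to any power dominating set (because $\emptyset$ is not a power dominating set of any component, and by Definition \ref{d:Xprop}(\ref{d-c:nK1}) a power dominating set of $G$ restricts to a power dominating set of each component). Hence if $G$ had $k\ge 2$ components, we would get $\pd(G)\ge k\ge 2$, contradicting $\pd(G)=1$. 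Therefore $G$ is connected, and since $n=r+1\ge 4$ with $\pdbar(G)=n-2$, Proposition \ref{p:upperpdconnected} yields $G\cong K_{1,n-1}=K_{1,r}$.

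The main obstacle is really only the verification of $\pdbar(K_{1,r})=r-1$ and the connectivity of $G$; both are short and the argument is essentially an assembly of the previously established invariance and structural results. The hypothesis $r\ge 3$ enters in two places: it ensures $n\ge 4$ so Proposition \ref{p:upperpdconnected} is available, and it is needed for $r-1$ leaves to actually form a minimal (rather than merely a non-minimum) power dominating set of $K_{1,r}$.
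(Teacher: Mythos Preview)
Your proposal is correct and follows essentially the same route as the paper: invoke the isomorphism invariants (Corollary~\ref{c:pd-iso-props}) to obtain $|V(G)|=r+1$, $\pd(G)=1$, and $\pdbar(G)=r-1$, deduce connectedness from $\pd(G)=1$, and then apply Proposition~\ref{p:upperpdconnected}. The only cosmetic difference is that the paper extracts $\pd(G)=1$ via the hypercube criterion (Corollary~\ref{c:pdequal1}) rather than directly from Corollary~\ref{c:pd-iso-props}, and it leaves the computation of $\pdbar(K_{1,r})$ and the connectivity argument implicit; your version spells these out.
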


\bpf 
Since $\tar(G)$ has an induced subgraph isomorphic to $Q_{n-1}$,  $\pd(G)=1$ by Corollary \ref{c:pdequal1}.  Thus  $G$ is connected.  By Corollary \ref{c:pd-iso-props}, $\pdbar(G)=\pdbar(\tar(K_{1,n-1}))=n-2$. Then Proposition \ref{p:upperpdconnected} implies $G\cong K_{1,n-1}$.
\epf




We now characterize graphs with $\pdbar{G}=n-3$ and establish which of those have unique TAR reconfiguration graphs. Let $K_{1,t}(e)$ denote the graph constructed from $K_{1,t}$ by adding an edge between one pair of leaves,  $K_{1,t}(\ell)$ denote the graph constructed from $K_{1,t}$ by adding an additional vertex adjacent to one leaf, and  $K_{2,t}(e)$ denote the graph constructed from $K_{2,t}$ by adding an edge between  the vertices in the partite set of order $2$.
\begin{lem}\label{k2tminimalsets}
For $t\ge 3$,  denote the partite set of order $2$ in $K_{2,t}$ by $X$ and the other by $Y$.  Use the same notation for  $K_{2,t}(e)$. The minimal power dominating sets of each of the graphs $K_{2,t}$ and $K_{2,t}(e)$ have one of two forms: any one vertex in $X$ or any $t-1$ vertices in $Y$. 
Thus $K_{2,t}$ and $K_{2,t}(e)$ each have the same  minimal power dominating sets (there are $t+2$ of them). 
\end{lem}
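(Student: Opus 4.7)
The plan is to verify directly that each claimed set is a minimal power dominating set, and then to rule out all other candidates by a short case analysis on $|S\cap X|$. Since the only edge that differs between $K_{2,t}$ and $K_{2,t}(e)$ lies inside $X$, and the propagation arguments will use only $X$--$Y$ adjacencies, a single proof will handle both graphs.

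First I would confirm that each singleton $\{x\}$ with $x\in X$ power dominates. In $K_{2,t}(e)$, $\{x\}$ is already a dominating set. In $K_{2,t}$, $P^1[\{x\}]=\{x\}\cup Y$ leaves only the other $X$-vertex $x'$ unobserved; each $y\in Y$ then has $x'$ as its unique unobserved neighbor and forces it. Minimality of a singleton is automatic. Next I would show that each $(t-1)$-subset $Y\setminus\{y_k\}$ is a minimal power dominating set: its closed neighborhood is $X\cup(Y\setminus\{y_k\})$, and then $x_1$ has $y_k$ as its unique unobserved neighbor and forces it. For minimality, removing any $y_i\ne y_k$ leaves both $y_i$ and $y_k$ unobserved in $Y$, each having neighborhood $\{x_1,x_2\}\subseteq P^1$; hence neither $x_j$ has a unique unobserved neighbor, and no $y$-vertex in the remaining set can force (its neighbors lie entirely in $X\subseteq P^1$), so propagation halts.

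To rule out other minimal power dominating sets, I would distinguish two cases. If $S\cap X\ne\emptyset$ and $|S|\ge 2$, then $\{x\}\subsetneq S$ for some $x\in S\cap X$; since $\{x\}$ is already power dominating, $S$ is not minimal. Thus the only minimal sets meeting $X$ are the singletons $\{x_1\}$ and $\{x_2\}$. If $S\subseteq Y$, then $|S|\ge t$ means $S$ properly contains an already-proved minimal $(t-1)$-subset of $Y$, while $|S|\le t-2$ leaves at least two unobserved vertices in $Y$ after the domination step and halts propagation by exactly the argument used in the minimality check above. This yields exactly $2+t=t+2$ minimal power dominating sets, as claimed.

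No serious obstacle is anticipated; the only point that requires care is the recurring ``two unobserved $Y$-vertices'' configuration, together with the observation that the added edge $x_1x_2$ in $K_{2,t}(e)$ never enables a new forcing step: whenever the argument needs $x_2$ observed, some $y_i\in P^1$ already forces $x_2$ or $x_2$ is already in $P^1$, so the $X$--$X$ edge is irrelevant to the minimality analysis.
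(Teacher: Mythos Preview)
Your proof is correct and follows essentially the same approach as the paper's: verify the claimed sets directly, then rule out all other minimal power dominating sets by a short case analysis on whether $S$ meets $X$. The only minor difference is that the paper invokes Proposition~\ref{p:upperpdconnected} (using that neither graph is a star) to cap $|S|$ at $t-1$, whereas you handle the case $S=Y$ directly by noting it contains a $(t-1)$-subset of $Y$; both arguments are equally valid.
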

\begin{proof}
It is straightforward to verify that any one vertex in $X$ is a minimal power dominating set, as is a set of $t-1$ vertices from $Y$.  By Proposition \ref{p:upperpdconnected}, all minimal power dominating sets have cardinality of $t-1$ or less.  Since each vertex in $X$ is a power dominating set, the only other possible minimal power dominating sets, then, are subsets of $Y$, but a subset of $Y$ is a power dominating set only if it contains at least $t-1$ vertices.  The last statement is immediate.
\end{proof}

\begin{lem}\label{k1teminimalsets}
 For $t\ge 4$,  let $v$ be the vertex of degree $t$ in $K_{1,t}(e)$ and let $u$ and $w$  be the two vertices  of degree $2$.  The minimal power dominating sets of $K_{1,t}(e)$  have one of two forms: 
 \begin{enumerate}
\item the set $\{v\}$, or 
\item a set of $t-2$ vertices containing one of $u$ or $w$ and all but one of the $t-2$ vertices of degree 1 adjacent to $v$.  
\end{enumerate}
Thus $K_{1,t}(e)$ has $2t-3$ minimal power dominating sets. 
\end{lem}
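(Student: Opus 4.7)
The plan is to proceed by casework on whether $v$ lies in a minimal power dominating set $S$ of $K_{1,t}(e)$, and, if not, on $|S\cap\{u,w\}|$. Two structural facts drive the argument: $v$ is adjacent to every other vertex of $K_{1,t}(e)$, so $\{v\}$ is dominating; and the only vertices of degree at least $2$ are $v,u,w$, so every propagation step must occur at one of these three, and in particular propagation from $v$ requires $v$ to have a unique unobserved neighbor.

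First I would verify that the listed families are minimal power dominating sets. The set $\{v\}$ is dominating since $N[v]=V(K_{1,t}(e))$, and so it is trivially minimal. For a set of form (2), say $S=\{u\}\cup L$ where $L$ consists of $t-3$ of the $t-2$ degree-$1$ neighbors of $v$ with $\ell^*$ the omitted leaf, round $1$ observes $\{u,v,w\}\cup L$, leaving only $\ell^*$ unobserved, so $v$ propagates to $\ell^*$. Deleting $u$ leaves $v$ with two unobserved neighbors $u,w$, while deleting any $\ell\in L$ leaves $v$ with two unobserved neighbors $\ell,\ell^*$; in both cases propagation halts and the smaller set is not power dominating, so $S$ is minimal. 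The argument is symmetric with $w$ in place of $u$.

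For the converse, let $S$ be minimal and write $L_0$ for the $t-2$ degree-$1$ neighbors of $v$. If $v\in S$, minimality of $\{v\}$ forces $S=\{v\}$. Now assume $v\notin S$. If $S\cap\{u,w\}=\emptyset$ then $S\subseteq L_0$, round $1$ observes only $\{v\}\cup S$, and the unobserved neighbors of $v$ include both $u$ and $w$, so no propagation can start and $S$ is not power dominating. If $\{u,w\}\subseteq S$, one checks directly that $S\setminus\{u\}$ still power dominates (via $w$ and the unchanged propagation behavior at $v$), contradicting minimality. In the remaining case $|S\cap\{u,w\}|=1$, say $u\in S$, the unobserved neighbors of $v$ after round $1$ are exactly $L_0\setminus S$, so power domination forces $|L_0\setminus S|\le 1$; when $L_0\subseteq S$, the set $S\setminus\{\ell\}$ for any $\ell\in L_0$ is still power dominating, contradicting minimality, while $|L_0\setminus S|=1$ gives precisely a set of form (2). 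Counting, we obtain $\{v\}$ together with $t-2$ choices of omitted leaf for each of $u$ and $w$, for a total of $1+2(t-2)=2t-3$.

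The main obstacle I anticipate is the mixed case $|S\cap\{u,w\}|=1$, where one must simultaneously rule out $S$ being too large (containing all of $L_0$ and hence non-minimal by leaf removal) and too small (missing more than one leaf, leaving $v$ unable to propagate). The other cases are short once the propagation behavior at $v$ is understood.
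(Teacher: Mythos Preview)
Your proof is correct. One small slip: when you delete $u$ from a set $S=\{u\}\cup L$ of form~(2), the unobserved neighbors of $v$ after round~1 are $u$, $w$, \emph{and} the omitted leaf $\ell^*$ (three, not two); this only strengthens your conclusion that propagation halts.

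Your route differs from the paper's in how it rules out the overlarge candidates. The paper first invokes Proposition~\ref{p:upperpdconnected} (together with $K_{1,t}(e)\ne K_{1,t}$) to obtain the cardinality bound $|S|\le t-2$ for any minimal power dominating set, and then asserts that a power dominating set avoiding $v$ must contain one of $u,w$ and all but one leaf; the bound then forces exactly form~(2). You instead dispose of the cases $\{u,w\}\subseteq S$ and $L_0\subseteq S$ by directly exhibiting a proper power dominating subset (removing $u$, respectively a leaf), establishing non-minimality without any external bound. The paper's argument is shorter once Proposition~\ref{p:upperpdconnected} is available, while yours is self-contained and makes every minimality check explicit.
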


\begin{proof}
It is straightforward to verify that the set $\{v\}$, or  a set of $t-2$ vertices containing one of $u$ or $w$ and all but one of the $t-2$ vertices  adjacent to $v$ that have degree $1$ are minimal power dominating sets.  By Proposition \ref{p:upperpdconnected}, all minimal power dominating sets have cardinality at most $t-2$.   Since $\{v\}$ is a power dominating set, the only other possible minimal power dominating sets are subsets of $V(K_{1,t}(e)) \backslash \{v\}$.  Any power dominating set not containing $v$ must contain at least one of $u$ or $w$, and all but one of the remaining vertices. The last statement follows from $1+2(t-2)=2t-3$. 
\end{proof}

Note that the order of $K_{1,t}(e)$ is $t+1$ whereas  $K_{1,t}(\ell)$, $K_{2,t}$, and  $K_{2,t}(e)$ have order $t+2$.  For comparison, we observe that $K_{1,t+1}(e)$ has order $t+2$ and $2t-1$ minimal power dominating sets.

\begin{lem}\label{k1tellminimalsets}
For $t\ge 3$,  let $v$ be the vertex of degree  $t$ in $K_{1,t}(\ell)$, let $u$ be the only vertex of degree $2$, and $w$ be its leaf neighbor. The minimal power dominating sets of   $K_{1,t}(\ell)$ have one of two forms:
\begin{enumerate}
\item the set $\{v\}$, or 
\item a set of $t-1$ vertices not containing $v$ and  containing at most one of $u$ or $w$.
\end{enumerate}
Thus $K_{1,t}(\ell)$ has $2t$ minimal power dominating sets.\end{lem}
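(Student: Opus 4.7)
The plan is to follow the same template used in the proofs of Lemmas~\ref{k2tminimalsets} and~\ref{k1teminimalsets}: first verify that each set listed in (1) and (2) is a minimal power dominating set of $K_{1,t}(\ell)$, then argue that these are the only minimal power dominating sets, and finally tabulate the count.

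For the forward direction, write the leaves of $v$ other than $u$ as $\ell_1,\dots,\ell_{t-1}$; recall that $w$ is a leaf adjacent only to $u$. The set $\{v\}$ is a power dominating set because $N[v]$ observes everything except $w$ in round~$1$, after which $u$ has $w$ as its unique unobserved neighbor and forces it. For a set $S$ of the second form, first observe that $v\in N[S]$ (since $t\ge 3$ forces some $\ell_j\in S$) and that $u\in N[S]$ whenever $u\in S$ or $w\in S$ (since $u\in N(w)$); in the remaining subcase $S=\{\ell_1,\dots,\ell_{t-1}\}$, the vertex $v$ has $u$ as its unique unobserved neighbor in round~$2$, after which $u$ forces $w$. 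In every subcase, once $v$ and $u$ are observed, the unique omitted leaf (if any) is the only unobserved neighbor of $v$, and $v$ forces it. Minimality of each such $S$ is verified by removing a single vertex and checking that propagation halts, typically because $v$ is left with at least two unobserved neighbors while no other observed vertex has exactly one unobserved neighbor.

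For the converse, let $S$ be a minimal power dominating set. If $v\in S$, then since $\{v\}$ is already a power dominating set, minimality forces $S=\{v\}$. Otherwise $v\notin S$, and three observations pin down the structure of $S$. First, $\{u,w\}\not\subseteq S$: if both were present, then $N[w]=\{u,w\}\subseteq N[S\setminus\{w\}]$, so $P^i[S\setminus\{w\}]=P^i[S]$ for every $i$, contradicting minimality of $S$. Second, at most one leaf $\ell_i$ of $v$ can be omitted from $S$, since each $\ell_j\notin S$ can only be observed by $v$ forcing it (its unique neighbor is $v$), and $v$ can force only a single vertex per round; two missing $\ell_i,\ell_j$ would leave $v$ perpetually with at least two unobserved neighbors. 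Third, if some $\ell_i$ is missing from $S$, then $u$ must be observed in order for $v$ ever to force $\ell_i$, and this requires $u\in S$ or $w\in S$ (otherwise $u\notin N[S]$ and the only observed vertex adjacent to $u$ would be $v$, which always retains $\ell_i$ among its unobserved neighbors). These constraints force $S$ to match item~(2), and in particular $|S|=t-1$.

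Counting is then immediate: item~(1) contributes one set, item~(2) contributes the single set $\{\ell_1,\dots,\ell_{t-1}\}$ with neither $u$ nor $w$, plus $2(t-1)$ sets obtained by choosing exactly one of $u$ or $w$ together with the leaf of $v$ to omit, yielding $1 + 1 + 2(t-1) = 2t$. The main obstacle is cleanly organizing the converse case analysis; once the reduction $N[w]\subseteq N[\{u\}]$ and the ``two missing leaves'' impossibility are established, the remainder is routine verification.
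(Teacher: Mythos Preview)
Your proof is correct and follows essentially the same template as the paper's: verify the listed sets work, observe that any minimal power dominating set containing $v$ must equal $\{v\}$, and then analyze subsets of $V\setminus\{v\}$. The paper invokes Proposition~\ref{p:upperpdconnected} for an upper bound on $|S|$, whereas you argue directly via the three structural observations; both routes arrive at the same case analysis.

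One small imprecision: your three observations together with $v\notin S$ do not by themselves force $|S|=t-1$. The set $\{\ell_1,\dots,\ell_{t-1},u\}$ (or with $w$ in place of $u$) satisfies all three constraints but has $t$ elements. You need to invoke minimality once more here, noting that such a set properly contains the power dominating set $\{\ell_1,\dots,\ell_{t-1}\}$ and hence is not minimal. With that one-line addition the converse is complete.
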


\begin{proof}
It is straightforward to verify that the set $\{v\}$ and a set of $t-1$ vertices not containing $v$ and  containing at most one of $u$ or $w$ are minimal power dominating sets.  By Proposition \ref{p:upperpdconnected}, all minimal power dominating sets have cardinality  $t-2$ or less.   Since $\{v\}$ is a power dominating set, the only other possible minimal power dominating sets are subsets of $V(K_{1,t}(e)) \backslash \{v\}$, but note that any such set must have order at least $t-1$ and must have at least $t-2$ of the $t-1$ degree 1 neighbors of $v$ to be a power dominating set.  The last statement follows from $1+2(t-1)+1=2t$. 
\end{proof}

\begin{thm}\label{t:pdbarnless3}
For any connected graph $G$ with  {$|V(G)|\ge 6$}, $\pdbar(G)=n-3$ if and only if $G$ is one of the following graphs: 
 $K_{1,t}(e)$,  $K_{1,t}(\ell)$, $K_{2,t}$, or  $K_{2,t}(e)$.
\end{thm}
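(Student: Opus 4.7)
For the backward direction, each of the four listed families contains a minimal power dominating set of size exactly $n-3$: apply Lemma \ref{k1teminimalsets} to $K_{1,t}(e)$ with $t=n-1$, Lemma \ref{k1tellminimalsets} to $K_{1,t}(\ell)$ with $t=n-2$, and Lemma \ref{k2tminimalsets} to $K_{2,t}$ and $K_{2,t}(e)$ with $t=n-2$. Since none of these graphs is a star, Proposition \ref{p:upperpdconnected} rules out $\pdbar=n-2$, so $\pdbar=n-3$ in each case.

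For the forward direction, suppose $\pdbar(G)=n-3$, fix a minimal power dominating set $S$ with $|S|=n-3$, and set $\{x,y,z\}=V(G)\setminus S$. My first step is to show that every vertex of $S$ has a neighbor in $\{x,y,z\}$: if some $u\in S$ had $N(u)\subseteq S$, then for any neighbor $v$ of $u$ in $S$, propagation from $S\setminus\{v\}$ leaves $u$ observed with $v$ as its unique unobserved neighbor, so $u$ propagates to $v$ and $S\setminus\{v\}$ is already a power dominating set, contradicting minimality. Writing $A=N(x)\cap S$, $B=N(y)\cap S$, $C=N(z)\cap S$, this gives $S=A\cup B\cup C$. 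I then split on $F=\{w\in\{x,y,z\}:N(w)\cap S=\emptyset\}$; connectedness of $G$ together with $S\ne\emptyset$ forces $|F|\le 2$.

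The case $|F|=0$ is eliminated as follows. For each $v\in S$, minimality forces some $w\in\{x,y,z\}$ with $N(w)\cap S\subseteq\{v\}$, because otherwise $x,y,z$ are all dominated from $S\setminus\{v\}$ and any neighbor of $v$ in $\{x,y,z\}$ then propagates to the only remaining unobserved vertex $v$. When $|F|=0$ this must be equality, giving an injection $S\hookrightarrow\{x,y,z\}$ and hence $n\le 6$; in the $n=6$ subcase, a propagation check shows that any edge inside $S$ or inside $\{x,y,z\}$ would allow the removal of some $v\in S$ to still power-dominate, forcing $G$ to consist of three disjoint edges and contradicting connectedness. The case $|F|=2$, say $y,z\in F$, forces $S=A$, and minimality (together with $|A|\ge 2$) forces each $v\in S$ to have no $S$-neighbor, so $S$ is a set of $n-3$ leaves of $x$. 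The requirement that $S$ propagates to reach $y$ and $z$ then forces $x$ to have exactly one neighbor in $\{y,z\}$ and that neighbor to be adjacent to the other, giving $G\cong K_{1,n-2}(\ell)$.

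The bulk of the work, and the main obstacle, is the case $|F|=1$, say $z\in F$. Here I plan to partition $S$ via $T_1=A\setminus B$, $T_2=B\setminus A$, $T_3=A\cap B$, and argue from the principle: whenever removing $v\in S$ still leaves both $x$ and $y$ dominated, any $S$-neighbor of $v$ would propagate to $v$, after which $z$ would be reached via $x$ or $y$, so minimality forces $v$ to be independent from $S$ in this situation. Combined with parallel minimality analyses for removing a candidate $v^*\in T_3$ and for removing $u\in T_1\cup T_2$, the failure of propagation to reach $z$ (or $v^*$) translates into precise constraints on which of $xy,xz,yz$ are edges. Enumerating the surviving profiles $(|T_1|,|T_2|,|T_3|)$ yields: $T_3=\emptyset$ with exactly one of $|T_1|,|T_2|$ equal to $1$ gives $G\cong K_{1,n-2}(\ell)$; $|T_3|\ge 2$ with $T_1=T_2=\emptyset$ gives $G\cong K_{2,n-2}$ or $K_{2,n-2}(e)$ according to whether $xy\in E(G)$; and $|T_3|=1$ with $T_1=\emptyset$ and $T_2\ne\emptyset$ (or the symmetric case) gives $G\cong K_{1,n-1}(e)$ if $y\sim x$ and $G\cong K_{1,n-2}(\ell)$ if $y\not\sim x$. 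All other profiles contradict minimality. The work is entirely in the bookkeeping: for each candidate profile one must track which vertices are dominated after a removal, which propagations can fire, and which $\{x,y,z\}$-adjacencies are then forced or forbidden.
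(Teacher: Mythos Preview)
Your case split on $|F|$ is exactly the paper's three-case structure (the paper's Cases~1, 2, 3 are your $|F|=2,1,0$), and within each case your analysis parallels the paper's, so the overall approach is the same.

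There is one genuine slip in your opening step. To show that every $u\in S$ has a neighbor in $\{x,y,z\}$, you suppose $N(u)\subseteq S$, pick a neighbor $v\in N(u)\cap S$, and argue that in $S\setminus\{v\}$ the vertex $u$ forces $v$. But since $u\in S\setminus\{v\}$ and $v\in N(u)$, the vertex $v$ is already dominated in round one, so there is nothing for $u$ to force; and even with $v$ observed, $S\setminus\{v\}$ need not be a power dominating set, because neighbors of $v$ lying outside $N[S\setminus\{v\}]$ may remain unobserved (you have not shown $N(v)\subseteq N[S\setminus\{v\}]$, and $v$, though observed, may have several such unobserved neighbors and hence cannot propagate). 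The fix is to remove $u$ itself: from $N(u)\subseteq S$ one gets $N[u]\subseteq S\subseteq N[S\setminus\{u\}]$, hence $N[S\setminus\{u\}]=N[S]$, so $S\setminus\{u\}$ has the same domination step as $S$ and is therefore a power dominating set, contradicting minimality. This is precisely Proposition~\ref{lem:neighbors}, which you could simply cite. With this repair the rest of your outline stands.
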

\begin{proof}
Lemma \ref{k2tminimalsets} through \ref{k1tellminimalsets} establish that $\pdbar(G)=n-3$ for each graph $G$ on the list.  

We now show that these are the only connected graphs on six or more vertices with $\pdbar(G)=n-3$.  Let $S$ be a minimal power dominating set with $|S|=n-3$.  Let $V\backslash S = \{u, v, w\}$.

\underline{Case 1:} First, suppose that $N(\{v, w\}) \cap S = \emptyset$.  That is, only $u$ is adjacent to any vertex in $S$.  If $v$ and $w$ were both adjacent to $u$, then $S$ would not be a power dominating set, so  the graph induced by $V \backslash S$ is a path with $u$ as an endpoint.  By Proposition \ref{lem:neighbors},  $u$ is adjacent to every vertex in $S$.  Suppose $xy \in E(G)$ for $x, y \in S$.  Then $S \backslash \{x\}$ is a power dominating set of $G$, contradicting the minimality of $S$.  Hence, there are no edges in $S$, so $G=K_{1,t}(\ell)$. 

\underline{Case 2:}  
Next, suppose that $N(w) \cap S = \emptyset$, but both $u$ and $v$ have neighbors in $S$.  Without loss of generality, $uw \in E(G)$.  Let $S_u$ be the set of vertices in $S$ that are adjacent to $u$ but not to $v$, let $S_v$ be the set of vertices in $S$ that are adjacent to $v$ but not to $u$, and let $S_{uv}$ be the set of vertices in $S$ that are adjacent to both $u$ and $v$.

First assume that $vw \notin E(G)$.  Note that  $|S_v| \leq 1$, since otherwise we could remove one vertex from  $S_v$ and still produce a power dominating set, violating minimality.  {Also, if $|S_{uv}| >1$, then removing one vertex from $S_{uv}$ results in a power dominating set, giving us that $|S_{uv}|=1$. Furthermore, there are no edges with both endpoints in $S_u\cup S_{uv}$,} since otherwise we could remove one vertex from {$S_u$}  and still produce a power dominating set, violating minimality.  If $S_{uv} = \emptyset$, then $uv\in E(G)$, $|S_v|=1$, and $G=K_{1,t}(\ell)$.  
Thus we assume that $S_{uv} \neq \emptyset$, in which case $S_v = \emptyset$, since removing {the vertex in $S_v$} from $S$ would still result in a power dominating set, violating minimality.    If $uv \in E(G)$, then $G=K_{1,t}(e)$, and if not, $G=K_{1,t}(\ell)$. 

Continuing with the case that $N(w) \cap S = \emptyset$ and both $u$ and $v$ have neighbors in $S$, we now assume that $uw, vw \in E(G)$.   First assume $S_{uv} = \emptyset$, which implies  $S_u$ and $S_v$ are both nonempty.  If $|S_u|>1$, then omitting one vertex from $S$ in  $S_u$  results in a power dominating set, contradicting minimality, and  $|S_v|>1$ is similar.  Thus,  $|S_u|=|S_v|=1$, which implies $|V(G)|=5$, violating our assumption that $|V(G)| \geq 6$. 

To conclude this case, we assume  $N(w) \cap S = \emptyset$, both $u$ and $v$ have neighbors in $S$, that $uw, vw \in E(G)$, and $S_{uv} \neq \emptyset$.  Then $S_u, S_v$ are both  empty; otherwise, omitting one vertex from either results in a power dominating {set,} violating minimality of $S$.  Also, there are no edges between any pair of vertices in $S_{uv}$ since we could omit one end vertex of the edge from $S$ and produce a power dominating set, again violating minimality of $S$.  Thus,   $G=K_{2,t}$ or $G=K_{2,t}(e)$. 

\underline{Case 3:} Finally, suppose that each of $u, v, w$ has a neighbor in $S$.  Suppose first that  $x \in S$ is a neighbor of $u, v$, and $w$.  For any other vertex $y \in S$, $S \backslash \{y\}$ is a power dominating set, violating minimality of $S$. But if no such $y$ exists, then $|V(G)|=4$, contradicting our assumption that $|V(G)| \geq 6$.  
Thus, no one vertex in $S$ is adjacent to all of $u, v, w$.  {Let $X=\{x,y\}$ be a set of two  vertices in $S$ such that $N(X) \backslash S = \{u, v, w\}$. Without loss of generality, $x$ is adjacent to two of $\{u,v,w\}$, but then  $S \backslash \{y\}$ is a power dominating set, violating minimality.  
  The} only case remaining is that $X = \{x, y, z\}$, with $u$ the only neighbor of $x$, $v$ the only neighbor of $y$, and $w$ the only neighbor of $z$.  (Note an edge among $x, y, z$ would violate minimality).  In this case as well, $\{u, v,w\}$ must induce a path or cycle in $G$ in order for $G$ to be connected; in either case, $\{x, y\}$ is a power dominating set, violating the minimality of $S$.  

Hence, the only connected graphs on at least six vertices that have $\pdbar(G)=n-3$ are $K_{1,t}(e)$,  $K_{1,t}(\ell)$, $K_{2,t}$, and  $K_{2,t}(e)$. 
\end{proof}

Since the TAR graph of a graph $G$ is completely determined by the minimal power dominating sets of $G$, the next result is immediate from Lemma \ref{k2tminimalsets} for $t\ge 3$.  Furthermore, any one vertex is a power dominating set for $ \tar(K_{2,2})$ and $\tar(K_{2,2}(e))$, so $\tar(K_{2,2}(e)) \cong \tar(K_{2,2})$.

\begin{cor}
\label{mPDS:n-3:K2t}
If $t \geq 2$, then  $\tar(K_{2,t}(e)) \cong \tar(K_{2,t})$.  That is, the TAR reconfiguration graphs of $K_{2,t}$ and $K_{2,t}(e)$ are not unique.
\end{cor}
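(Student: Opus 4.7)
The plan is to invoke Observation \ref{new-obs}, which says that for any property satisfying Definition \ref{d:Xprop}(\ref{x1}), knowledge of all minimal $X$-sets is sufficient to determine $\xtar(G)$. In particular, the power domination TAR graph $\tar(G)$ is determined entirely by the collection of minimal power dominating sets of $G$. So to establish $\tar(K_{2,t}(e)) \cong \tar(K_{2,t})$, it suffices to exhibit a bijection between the vertex sets of $K_{2,t}$ and $K_{2,t}(e)$ under which the collections of minimal power dominating sets coincide.

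For $t \geq 3$, I would use the natural identification of vertices: both $K_{2,t}$ and $K_{2,t}(e)$ share the same vertex set $X \cup Y$ with $|X|=2$ and $|Y|=t$, and $K_{2,t}(e)$ differs from $K_{2,t}$ only in the extra edge inside $X$. Lemma \ref{k2tminimalsets} states precisely that under this identification the minimal power dominating sets of $K_{2,t}$ and $K_{2,t}(e)$ are the same family of $t+2$ sets (namely, the two singletons in $X$ and the $(t-1)$-subsets of $Y$). Applying Observation \ref{new-obs} then yields $\tar(K_{2,t}(e)) \cong \tar(K_{2,t})$ directly, with the isomorphism induced by the identity map on $X \cup Y$.

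For the remaining case $t = 2$, I would verify the claim directly since Lemma \ref{k2tminimalsets} is stated only for $t \geq 3$. In both $K_{2,2}$ and $K_{2,2}(e)$ each vertex $v$ dominates its neighbors in one round, and the one remaining unobserved vertex is then forced in the next propagation round (any observed neighbor has exactly one unobserved neighbor at that point). Hence in both graphs the minimal power dominating sets are exactly the four singletons, and Observation \ref{new-obs} again delivers the desired isomorphism.

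The proof is essentially a bookkeeping application of two earlier results, so there is no substantial obstacle; the only minor care required is the separate verification of the $t=2$ case, since Lemma \ref{k2tminimalsets} does not cover it.
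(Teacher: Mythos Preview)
Your proposal is correct and follows essentially the same approach as the paper: the paper notes (in the sentence preceding the corollary) that the TAR graph is determined by the minimal power dominating sets, invokes Lemma~\ref{k2tminimalsets} for $t\ge 3$, and handles $t=2$ separately by observing that every single vertex is a power dominating set in both $K_{2,2}$ and $K_{2,2}(e)$. Your write-up is somewhat more explicit about the identification of vertex sets and the appeal to Observation~\ref{new-obs}, but the argument is the same.
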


\begin{prop}
\label{mPDS:n-3:K1t} Let $t \geq 4$ and let $H$ be a graph with no isolated vertices. 
If   $\tar(K_{1,t+1}(e)) \cong \tar(H)$, then $H \cong K_{1,t+1}(e)$.  If  $\tar(K_{1,t}(\ell)) \cong \tar(H)$, then $H \cong K_{1,t}(\ell)$. 
\end{prop}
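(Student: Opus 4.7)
The plan is to combine the general invariance results for TAR graphs with the structural classification of Theorem \ref{t:pdbarnless3} to narrow $H$ to a small list of candidates, and then to use a finer invariant of $\tar(H)$ to isolate the correct one.

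Suppose $G \in \{K_{1,t+1}(e), K_{1,t}(\ell)\}$ and $\tar(H) \cong \tar(G)$, with $H$ having no isolated vertex. Corollary \ref{c:pd-iso-props} yields $|V(H)| = t+2$, $\pd(H) = 1$, and $\pdbar(H) = t-1$. Because $\pd(H) = 1$ and $H$ has no isolated vertices, Definition \ref{d:Xprop}\eqref{d-c:nK1} forces $H$ to be connected (a disconnected graph with no isolated vertex would have $\pd \geq 2$). Applying Theorem \ref{t:pdbarnless3} (valid since $n = t+2 \ge 6$ and $\pdbar(H) = n-3$) and matching the order narrows the possibilities to
\[H \in \{K_{1,t+1}(e),\, K_{1,t}(\ell),\, K_{2,t},\, K_{2,t}(e)\}.\]

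To distinguish among these four candidates, I would count the vertices of minimum degree in $\tar(H)$; by Corollary \ref{p:mindegreeTAR} this minimum degree equals $n-\pdbar(H) = 3$. The central claim is that for each candidate graph $G'$, the vertices of $\tar(G')$ of degree exactly $3$ are precisely the minimal power dominating sets of $G'$ of size $\pdbar(G') = t-1$. Combined with Lemmas \ref{k1teminimalsets}, \ref{k1tellminimalsets}, and \ref{k2tminimalsets}, the count of such vertices is $2t-2$ for $K_{1,t+1}(e)$, $2t-1$ for $K_{1,t}(\ell)$, and $t$ for each of $K_{2,t}$ and $K_{2,t}(e)$. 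For $t \ge 4$ these three resulting values satisfy $t < 2t-2 < 2t-1$, so they are pairwise distinct, forcing $H \cong G$; the candidates $K_{2,t}$ and $K_{2,t}(e)$ are ruled out because their count $t$ does not match either hypothesized $G$.

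The main obstacle is justifying the central claim, which requires a short case analysis showing that no non-minimal PDS of any candidate has degree $3$ in its TAR graph. The key observation is that in each candidate graph, any PDS $S$ that contains a vertex $c$ with $\{c\}$ itself a minimal PDS (namely $c = v$ in the star-type graphs, and $c \in \{x_1, x_2\}$ in the $K_{2,t}$-type graphs) satisfies $k(S) := |\{u \in S : S \setminus \{u\} \text{ is a PDS}\}| \ge |S|-1$, giving $\deg_{\tar(G')}(S) = (n-|S|) + k(S) \ge n-1 > 3$ for $t \ge 4$. The remaining PDSs, which do not contain such a $c$, form a small explicit family (subsets of $V(G') \setminus \{c\}$) whose degrees in $\tar$ can be computed directly against the minimal-PDS lists of the respective lemmas. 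An alternative route that avoids this case analysis is to distinguish the three TAR-isomorphism types via the total count $|V(\tar(H))|$, which by inclusion-exclusion on the minimal PDSs equals $2^{t+1} + 3t$, $2^{t+1} + 3t + 1$, and $2^{t+1} + 2^t + t + 1$ respectively for $K_{1,t+1}(e)$, $K_{1,t}(\ell)$, and $K_{2,t}$ (equivalently $K_{2,t}(e)$); these values are pairwise distinct when $t \ge 4$.
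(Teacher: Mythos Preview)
Your proof is correct and follows the same route as the paper: use Corollary~\ref{c:pd-iso-props} to pin down $|V(H)|$, $\pd(H)$, and $\pdbar(H)$, deduce that $H$ is connected, apply Theorem~\ref{t:pdbarnless3} to obtain the four candidates, and then separate them by a numerical invariant of $\tar$. The only difference is the separating invariant: the paper simply cites the total number of minimal power dominating sets from Lemmas~\ref{k2tminimalsets}--\ref{k1tellminimalsets} (namely $t+2$, $2t-1$, $2t$), whereas you use the number of minimum-degree vertices of $\tar$ or, in your alternative route, the order $|V(\tar(H))|$; your choices are manifestly isomorphism invariants of $\tar(H)$ and hence make the final step slightly more explicit, at the cost of the short case analysis you outline.
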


\begin{proof}
Suppose $\tar(K_{1,t+1}(e)) \cong \tar(H)$.  By Corollary \ref{c:pd-iso-props}, $\pd(H)=\pd(K_{1,t}(e))=1$.  This implies  that $H$ is connected.  
By Theorem \ref{t:pdbarnless3}, the only possibilities are that $H$ is isomorphic to one of  $K_{1,t+1}(e)$, $K_{1,t}(\ell)$, $K_{2,t}$, or $K_{2,t}(e)$.  By examining the number of minimal power dominating sets in Lemmas \ref{k2tminimalsets}, \ref{k1teminimalsets}, and  \ref{k1tellminimalsets}, we see that $H \cong K_{1,t+1}(e)$.  The same reasoning holds for the case $\tar(K_{1,t}(\ell)) \cong \tar(H)$. 
\end{proof}


\begin{prop}\label{p:K33} $\tar(K_{3,3})$ is   unique. 
\end{prop}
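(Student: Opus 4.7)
The plan is to show that any $H$ with no isolated vertices and $\tar(H)\cong\tar(K_{3,3})$ satisfies $H\cong K_{3,3}$. First I would apply Corollary \ref{c:pd-iso-props} to obtain $|V(H)|=6$ and $\pd(H)=\pdbar(H)=2$, so that every minimal power dominating set of $H$ has size exactly two. Since $\pd(K_{3,3})=2$ and in fact every pair of vertices in $K_{3,3}$ is a (minimum) power dominating set---a same-side pair dominates one partite set and every vertex of the opposite partite set then has a unique unobserved neighbor, while a cross-side pair is a dominating set---the power dominating sets of $K_{3,3}$ are exactly the subsets of $V(K_{3,3})$ of size at least two. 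Hence $\tar(K_{3,3})$ has $\sum_{i=2}^{6}\binom{6}{i}=57$ vertices. As $\tar(H)$ also has $57$ vertices and $\pd(H)=2$ excludes every subset of size at most one from being a PDS, the PDSs of $H$ must be exactly the $57$ subsets of $V(H)$ of size at least two; in particular, every pair in $H$ is a minimal PDS.

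Next I would establish three structural properties of $H$. First, $H$ is connected, since in a disconnected graph every PDS contains a PDS of each component, but a pair lying inside one component meets no other component. Second, $\Delta(H)\le 3$: if $\deg(v)\ge 4$ then $|V(H)\setminus N[v]|\le 1$, and (using that $H$ has no isolated vertices) any potential lone non-neighbor $u$ has some $x\in N(v)$ as neighbor, so after round $1$ from $\{v\}$ the vertex $x$ has $\{u\}$ as its only unobserved neighbor and forces it; hence $\{v\}$ would be a PDS, contradicting $\pd(H)=2$. Third, $\delta(H)\ge 2$: if $v$ has unique neighbor $w$, then $N[\{v,w\}]=\{v\}\cup N[w]=N[\{w\}]$, so propagation from $\{w\}$ and from $\{v,w\}$ coincide from round $1$ onward; since $\{v,w\}$ is a PDS so is $\{w\}$, again contradicting $\pd(H)=2$.

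The main obstacle is strengthening $\delta(H)\ge 2$ to $\delta(H)\ge 3$. Suppose for contradiction $\deg(v)=2$ with $N(v)=\{w_1,w_2\}$, and write $\{a,b,c\}=V(H)\setminus N[v]$. The key observation is that once $v$ is observed, $v$ can contribute to further propagation only by forcing whichever of $w_1,w_2$ remains unobserved, when the other has already been observed. I would analyze cases according to whether $w_1w_2\in E(H)$ and what the degrees of $w_1$ and $w_2$ are. In each case, using that the pairs $\{w_1,w_2\}$, $\{v,w_i\}$, and every pair inside $\{a,b,c\}$ are PDSs while no singleton is, one forces tight adjacency patterns between $\{w_1,w_2\}$ and $\{a,b,c\}$. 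These constraints typically either make $\{v\}$ itself propagate to all of $V$ (contradicting $\pd(H)=2$), or force some vertex in $\{a,b,c\}$ to have degree less than $2$, or disconnect $H$. This degree-$2$ case analysis is the most delicate part of the argument.

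Finally, $\delta(H)=\Delta(H)=3$ forces $H$ to be $3$-regular on $6$ vertices, so $H$ is either $K_{3,3}$ or the triangular prism formed by joining two disjoint triangles by a perfect matching. I would then verify directly that the prism has power domination number $1$: any vertex $v$ in one triangle observes its two triangle-neighbors and its matched partner in round $1$, and then each of $v$'s triangle-neighbors has its own matched partner as its unique unobserved neighbor and forces it, completing the observation of $V$. Since $\pd(H)=2$, this rules out the prism, yielding $H\cong K_{3,3}$.
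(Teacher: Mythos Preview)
Your overall strategy matches the paper's: use Corollary~\ref{c:pd-iso-props} to get $|V(H)|=6$ and $\pd(H)=\pdbar(H)=2$, count $57$ vertices to conclude every $2$-subset is a PDS, deduce connectedness and $3$-regularity, and then rule out the non-bipartite cubic graph on $6$ vertices. Your final step (listing the two cubic $6$-vertex graphs and eliminating the prism because it has $\pd=1$) is equivalent to the paper's argument that $H$ has no triangle and hence is bipartite.

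The one place you diverge is the ``main obstacle'' of ruling out a degree-$2$ vertex, and here you make the problem much harder than it is. Your sketch is a multi-case adjacency analysis that you do not actually carry out (the word ``typically'' is a red flag), so as written the argument is incomplete. But no case analysis is needed: the very argument you gave for degree $1$ works verbatim for degree at most $2$, and this is what the paper does. If $\deg_H(v)\le 2$ and $u\in N(v)$, then after the domination step from $\{u\}$ the vertex $v$ is observed and has at most one unobserved neighbor, so $v$ forces it in the next round. Thus $P^2[\{u\}]\supseteq N[\{u,v\}]=P^1[\{u,v\}]$, and by monotonicity of propagation $\{u\}$ power dominates whenever $\{u,v\}$ does. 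Since every pair is a PDS, $\{u\}$ would be a PDS, contradicting $\pd(H)=2$. Replace your degree-$2$ case analysis with this one-line observation and the proof is complete.
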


\begin{proof}
Let $G$ be a graph with no isolated vertices such that  $\tar(G)\cong \tar(K_{3,3})$. Then by Corollary \ref{c:pd-iso-props},  $|V(G)|=6$,  $\pdbar(G)=2$, and $\pd(G)=2$. The order of $\tar(G)$ is 57, so every subset of $G$ of cardinality at least 2 must be a power dominating set. This implies that $G$ is connected. 

To show $G$ is $3$-regular, observe that if there exists a vertex $v$ with $\deg(v)\geq 4$ then $\{v\}$ would be a power dominating set, contradicting $\pd(G)=2$. So $\Delta(G)\leq 3$, and since $G$ is a connected graph that is not a path or cycle,  $\Delta(G)=3$. Suppose $G$ has a vertex $v$ with $\deg_G(v)\le 2$ and let $u$ be a neighbor of $v$ in $G.$ Since $\{u,v\}$ is a power dominating set, it follows that $\{u\}$ is a power dominating set, contradicting that $\gamma_P(G)=2.$ Thus, $G$ is 3-regular. 

To see that $G$ is bipartite, suppose to the contrary that $G$ has an odd cycle. Note that any 3-regular graph on 6 vertices containing a 5-cycle must also contain a 3-cycle, so suppose that $G$ has a 3-cycle, $C=(a,b,c)$. Let the remaining vertices of $G$ be $d$, $e$, and $f$.  Since $G$ is 3-regular, we assume without loss of generality that $a$ is adjacent to  $d$, and that $e$ is adjacent to  $b$. Then $\{a\}$ is a power dominating set of $G$, contradicting $\gamma_P(G)=2.$  Therefore, $G$ has no odd cycles and is bipartite. Thus $G$ must be $K_{3,3}.$
\end{proof}

 By Theorem \ref{upperPDn-3}, $\tar(K_{1,t})$ is unique for $t\ge 3$, and by Proposition \ref{p:K33}, $\tar(K_{3,3})$ is unique. Sage code computations have confirmed $\tar(K_{s,t})$ is unique for $s\in\{3,4\}$ and $t\in\{3,4,5\}$. This suggests the next Conjecture.
\begin{conj} $\tar(K_{s,t})$ is   unique if and only if $s\ne 2$ and $t\ne 2$.
\end{conj}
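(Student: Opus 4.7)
For the forward direction (non-uniqueness when $s=2$ or $t=2$), assume without loss of generality that $s=2$. By Corollary~\ref{mPDS:n-3:K2t}, $\tar(K_{2,t}) \cong \tar(K_{2,t}(e))$ for all $t\geq 2$, and $K_{2,t}$ and $K_{2,t}(e)$ are non-isomorphic (they have $2t$ versus $2t+1$ edges). The case $t=2$ follows by the symmetry $K_{s,t}\cong K_{t,s}$. Thus $\tar(K_{s,t})$ is not unique whenever $s=2$ or $t=2$.

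For the reverse direction, assume $s\leq t$ with $s,t\neq 2$. The case $s=1,\,t\geq 3$ is Theorem~\ref{upperPDn-3}, the case $s=t=3$ is Proposition~\ref{p:K33}, and $s=t=1$ is trivial since the only graph on two vertices without isolated vertices is $K_2$. The substantive case remaining is $3\leq s\leq t$ with $(s,t)\neq (3,3)$. Suppose $\tar(H)\cong \tar(K_{s,t})$ with $H$ having no isolated vertices. First apply Corollary~\ref{c:pd-iso-props} to obtain $|V(H)|=s+t$, $\pd(H)=2$, and $\pdbar(H)=t-1$. A direct propagation argument enumerates the minimal power dominating sets of $K_{s,t}$: the $st$ cross-pairs $\{a,b\}$ with $a,b$ in different parts; the $s$ subsets of the smaller part of cardinality $s-1$; and the $t$ subsets of the larger part of cardinality $t-1$ (when $s=3$ the first two types both have cardinality~$2$). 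Since $\deg_{\tar(H)}(S)=(s+t)-|S|$ for every minimal PDS $S$ (see the proof of Proposition~\ref{p:umindegreeTAR}), these counts by cardinality are visible as invariants of $\tar(H)$ and must match in $H$.

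Next I would establish that $H$ is connected: a decomposition $H=H_1\sqcup H_2$ with $\pd(H_i)=1$ is incompatible with the observed count of minimum power dominating sets of size~$2$ together with the bound $\pdbar(H_i)\le |V(H_i)|-2$ from Proposition~\ref{p:upperpdconnected}. Then I would use the minimum PDS (and the size-$(t-1)$ minimal PDS) to recover a bipartition $(A,B)$ of $V(H)$ with $|A|=s$, $|B|=t$: for $s\geq 4$ the $st$ minimum PDS are exactly the cross-pairs, which form a copy of $K_{s,t}$ on $V(H)$, directly giving the bipartition; for $s=3$ the $t$ minimal PDS of cardinality $t-1$ single out the larger part $B$, and the remaining vertices form $A$. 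Finally, the detailed pattern of which larger subsets of $V(H)$ are PDS of $H$ (encoded in $\tar(H)$) must be used to force every cross-pair to be an edge in $H$, yielding $H\cong K_{s,t}$.

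The main obstacle is this final edge-reconstruction step. Cardinalities of minimal PDS alone do not pin down $H$, since a priori different bipartite graphs on the same parts could share the same collection of power dominating sets. A complete argument will likely require tracking which $3$- and $4$-element subsets of $V(H)$ are PDS and relating these back to individual edges of $H$ via propagation, a delicate case analysis. This is presumably why the authors state the result only as a conjecture, having verified it computationally for $s\in\{3,4\}$ and $t\in\{3,4,5\}$.
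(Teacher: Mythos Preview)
The statement you are addressing is stated in the paper as a \emph{conjecture}, not a theorem; the paper offers no proof of the reverse implication beyond the special cases $K_{1,r}$ (Theorem~\ref{upperPDn-3}), $K_{3,3}$ (Proposition~\ref{p:K33}), and Sage verification for $s\in\{3,4\}$, $t\in\{3,4,5\}$. Your write-up accurately reflects this: the forward direction you give coincides with the paper's Corollary~\ref{mPDS:n-3:K2t}, and your handling of the settled special cases matches the paper. You also correctly identify that the edge-reconstruction step for general $3\le s\le t$ is the missing ingredient, which is precisely why the authors leave the statement as a conjecture. In that sense there is nothing to compare your reverse-direction outline against.

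Two minor points on what you did write. First, your forward direction leaves the single case $K_{1,2}$ uncovered: Corollary~\ref{mPDS:n-3:K2t} applies only for $t\ge 2$ in $K_{2,t}$, so the symmetry $K_{1,2}\cong K_{2,1}$ does not feed into it. This gap is easily filled by noting that every vertex of each of $K_{1,2}=P_3$ and $K_3$ is a power dominating set, whence $\tar(P_3)\cong\tar(K_3)$. Second, your connectedness sketch for $H$ is plausible but would need care: with $\pd(H)=2$ a putative split $H=H_1\sqcup H_2$ forces $\pd(H_i)=1$, and then the minimum power dominating sets of $H$ are all products of singletons, giving $|V(H_1)|\cdot|V(H_2)|$ minimum PDS; matching this to the correct count for $K_{s,t}$ (which is $st$ when $s\ge 4$ but $3t+3$ when $s=3$) and simultaneously matching $\pdbar$ requires a short argument you have not supplied. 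None of this changes the overall situation: the reverse implication for general $s,t\ge 3$ remains open, both in the paper and in your proposal.
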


\section{A universal approach to the TJ reconfiguration graph}\label{s:universal-TE}

As in Section \ref{s:universal}, we work with a graph parameter $X(G)$ defined as the minimum cardinality among $X$-sets. 
In this section we assume that $X$ is a property such that an isolated vertex of $G$ must be in every $X$-set of $G$ and establish results for the token jumping reconfiguration graph of $X$.

\begin{defn} The {\em token jumping (TJ) reconfiguration graph for $X$}, denoted by $\xte(G)$, has as its vertex set the set of all minimum $X$-sets of $G$, with an edge between vertices $S$ and $S'$ if and only if $S$ can be obtained from $S'$ by exchanging a single vertex.
\end{defn}

 Examples of suitable properties include $X(G)$ is the domination number $\gamma(G)$,
   the power domination number $\pd(G)$ (see Section \ref{s:TE}),  the zero forcing number $\Z(G)$ (see \cite{GHH}), and   the positive semidefinite zero forcing number $\Zp(G)$.

\begin{rem}\label{no-isol-vtx-te} Suppose $G'=G\sqcup rK_1$ and $G$ has no isolated vertices.   In view of the assumption about $X$ in this section,   $X(G')=X(G)+r$,  and $\xte(G')\cong\xte(G)$.   Thus we  often assume a graph $G$ has no isolated vertices. 
\end{rem}
The proof of the next result is analogous to the proof of Proposition 2.7 in \cite{GHH}.

\begin{prop}\label{disjoint_union-x}
For any graphs $G$ and $G'$, $\xte(G \sqcup G') = \xte(G) \cp \xte(G')$.
\end{prop}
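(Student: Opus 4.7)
The plan is to exhibit an explicit graph isomorphism between $\xte(G \sqcup G')$ and $\xte(G) \cp \xte(G')$ via the natural map $\varphi(S,S') = S \cup S'$.

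First I would identify the vertex sets. Since any $X$-set of a disconnected graph is the union of $X$-sets of its components, we have $X(G \sqcup G') = X(G) + X(G')$, and every minimum $X$-set $T$ of $G \sqcup G'$ decomposes uniquely as $T = (T \cap V(G)) \cup (T \cap V(G'))$, where $T \cap V(G)$ is a minimum $X$-set of $G$ and $T \cap V(G')$ is a minimum $X$-set of $G'$. Thus $\varphi$ is a bijection between $V(\xte(G)) \times V(\xte(G')) = V(\xte(G) \cp \xte(G'))$ and $V(\xte(G \sqcup G'))$.

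Next I would verify that $\varphi$ preserves and reflects adjacency. In the forward direction, suppose $(S_1,S_1')$ and $(S_2,S_2')$ are adjacent in $\xte(G) \cp \xte(G')$. By the definition of Cartesian product, exactly one coordinate changes by a single-vertex exchange in its factor. If $S_1' = S_2'$ and $S_1, S_2$ differ by a single exchange in $\xte(G)$, then $S_1 \cup S_1'$ and $S_2 \cup S_2'$ differ by the same single exchange (in $V(G)$), so they are adjacent in $\xte(G \sqcup G')$; the other case is symmetric. Conversely, suppose $T_1 = S_1 \cup S_1'$ and $T_2 = S_2 \cup S_2'$ are adjacent in $\xte(G \sqcup G')$, so $T_1 \setminus T_2 = \{v\}$ and $T_2 \setminus T_1 = \{w\}$. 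Since $V(G) \cap V(G') = \emptyset$, each of $v,w$ lies in exactly one component. The key observation is that $v$ and $w$ must lie in the \emph{same} component: if, say, $v \in V(G)$ and $w \in V(G')$, then $|S_2| = |S_1| - 1 < X(G)$, contradicting that $S_2$ is a minimum $X$-set of $G$ (and symmetrically for the other crossing case). Hence $v,w$ lie in the same component, one of the pairs $(S_i)$ or $(S_i')$ is unchanged, and the other changes by a single exchange, yielding an edge of $\xte(G) \cp \xte(G')$.

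The main technical obstacle is precisely the no-crossing argument above; once that is established, the rest is bookkeeping. Note that this argument uses in an essential way both that we are restricting to \emph{minimum} $X$-sets (so cardinalities in each component are rigid) and that $X$ respects the disjoint-union decomposition of $X$-sets, which applies to all the parameters of interest.
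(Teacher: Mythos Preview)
Your proof is correct and follows essentially the same approach as the paper: both set up the natural bijection $\varphi(S,S')=S\cup S'$ and check that adjacency is preserved in both directions. Your write-up is in fact more careful than the paper's terse version, since you make explicit the ``no-crossing'' argument (that the exchanged vertices $v$ and $w$ must lie in the same component, else one restriction would drop below the minimum), which the paper leaves implicit.
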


\begin{proof} Every vertex in $\xte(G \sqcup G')$ is of the form $S \sqcup S'$, where $S$ is a vertex of $\xte(G)$ and $S$ is a vertex of $\xte(S')$. Two vertices $S_1 \cup S'_1$ and $S_2 \cup S'_2$ are adjacent in $\xte(G \sqcup G')$ if and only if either $S_1$ and $S_2$ are adjacent in $\xte(S)$ and $S'_1 = S'_2$ or $S'_1$ and $S'_2$ are adjacent in $\xte(G')$ and $S_1 = S_2$. Thus $\xte(G \sqcup G') = \xte(G) \cp \xte(G')$. \end{proof}

\begin{cor}\label{disjoint_connected-x}
If $G$ and $G'$ are graphs such that $\xte(G)$ and $\xte(G')$ are connected, then $\xte(G \sqcup G')$ is connected.
\end{cor}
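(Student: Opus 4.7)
The proof plan is essentially immediate from the preceding proposition: by Proposition \ref{disjoint_union-x}, $\xte(G \sqcup G') = \xte(G) \cp \xte(G')$, so it suffices to show that the Cartesian product of two connected graphs is connected, which is a standard fact. I will give a direct constructive path-building argument inside the Cartesian product, since it makes the correspondence with token jumps transparent and keeps the proof self-contained.

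Concretely, I would start by fixing two arbitrary vertices of $\xte(G\sqcup G')$ and, using Proposition \ref{disjoint_union-x}, writing them as $S_1 \sqcup S_1'$ and $S_2 \sqcup S_2'$ with $S_1, S_2$ minimum $X$-sets of $G$ and $S_1', S_2'$ minimum $X$-sets of $G'$. By the connectedness of $\xte(G)$, there is a path $S_1 = T_0, T_1, \dots, T_p = S_2$ in $\xte(G)$, and by the connectedness of $\xte(G')$ there is a path $S_1' = T_0', T_1', \dots, T_q' = S_2'$ in $\xte(G')$.

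I would then concatenate these into a single path in $\xte(G\sqcup G')$ by first letting the second coordinate stay fixed at $S_1'$ while the first coordinate traverses $T_0, T_1, \dots, T_p$, and then letting the first coordinate stay fixed at $S_2$ while the second coordinate traverses $T_0', T_1', \dots, T_q'$. The edges of this concatenation are exactly edges of $\xte(G) \cp \xte(G')$, which by Proposition \ref{disjoint_union-x} are edges of $\xte(G \sqcup G')$, so this gives a path from $S_1 \sqcup S_1'$ to $S_2 \sqcup S_2'$ in $\xte(G \sqcup G')$. Since the pair was arbitrary, $\xte(G \sqcup G')$ is connected.

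I do not anticipate any obstacle: the content is entirely contained in Proposition \ref{disjoint_union-x}, and the only additional observation needed is the elementary fact that Cartesian products of connected graphs are connected. The proof will therefore be short (a few lines) and largely a citation of the preceding proposition.
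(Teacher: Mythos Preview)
Your proposal is correct and matches the paper's approach: the paper states this as a corollary with no proof, treating it as immediate from Proposition \ref{disjoint_union-x} together with the standard fact that the Cartesian product of connected graphs is connected. Your explicit path-building argument simply spells out that standard fact.
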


\begin{cor}\label{c:hypercube} If $\xte(K_2)=K_2$, then $\xte(K_2\sqcup\dots\sqcup K_2)=Q_d$.
\end{cor}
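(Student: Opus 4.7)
The plan is to prove this by induction on $d$, using Proposition \ref{disjoint_union-x} as the engine and the standard recursive definition of the hypercube $Q_d = Q_{d-1} \cp K_2$ (with $Q_1 = K_2$) as the target.

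For the base case $d=1$, the statement reduces to the hypothesis $\xte(K_2) = K_2 = Q_1$, so there is nothing to do. For the inductive step, assume $\xte(K_2 \sqcup \cdots \sqcup K_2) \cong Q_{d-1}$ for the disjoint union of $d-1$ copies. Applying Proposition \ref{disjoint_union-x} with $G$ equal to the disjoint union of $d-1$ copies of $K_2$ and $G' = K_2$ yields
\[
\xte(\underbrace{K_2 \sqcup \cdots \sqcup K_2}_{d}) = \xte(\underbrace{K_2 \sqcup \cdots \sqcup K_2}_{d-1}) \cp \xte(K_2) \cong Q_{d-1} \cp K_2 = Q_d,
\]
closing the induction.

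There is essentially no obstacle here: the only subtlety is confirming that the recursive identity $Q_d = Q_{d-1} \cp K_2$ is the definition of the hypercube being used, which is consistent with the characterization of $Q_n$ via $(0,1)$-sequences given earlier in Section \ref{s:universal}. Alternatively, one could collapse the induction into a single step by iterating Proposition \ref{disjoint_union-x} to get $\xte(K_2 \sqcup \cdots \sqcup K_2) \cong \xte(K_2) \cp \cdots \cp \xte(K_2) \cong K_2 \cp \cdots \cp K_2 = Q_d$, which is the most efficient way to present the argument.
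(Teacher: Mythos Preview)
Your proposal is correct and matches the paper's intended argument: the corollary is stated without proof immediately after Proposition \ref{disjoint_union-x}, and iterating that proposition (either by induction or directly, as you note in your final paragraph) is exactly how it follows.
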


Observe that for $X(G) \in \{\Z(G), \Zp(G), \pd(G), \gamma(G)\}$,
$\xte(K_2)=K_2$, so $\xte(K_2\sqcup\dots\sqcup K_2)=Q_d$.

The proof of the next result is analogous to the proof of Proposition 2.4 in \cite{GHH}.

\begin{prop}\label{p:Delta}
Let $G$ be a graph such that $\xte(G)$ does not have a $K_3$ subgraph. Then \[\Delta(\xte(G))\le X(G).\]  
 \end{prop}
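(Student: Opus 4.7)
The plan is to prove the bound by contradiction using a straightforward pigeonhole argument on the way neighbors of a minimum $X$-set are formed.

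Let $S$ be a vertex of $\xte(G)$, so $S$ is a minimum $X$-set with $|S| = X(G)$. Every neighbor $S'$ of $S$ in $\xte(G)$ has the form $S' = (S \setminus \{u\}) \cup \{v\}$ for some $u \in S$ and some $v \in V(G) \setminus S$, since $|S \ominus S'| = 2$ with $|S'|=|S|=X(G)$. I would associate to each neighbor $S'$ of $S$ the unique vertex $u(S') \in S \setminus S'$ that is removed. This defines a function from $N_{\xte(G)}(S)$ to $S$.

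Suppose for contradiction that $\deg_{\xte(G)}(S) > X(G) = |S|$. By the pigeonhole principle applied to the map $S' \mapsto u(S')$, there exist two distinct neighbors $S_1, S_2$ of $S$ with $u(S_1) = u(S_2) = u$ for some $u \in S$. Then $S_1 = (S \setminus \{u\}) \cup \{v_1\}$ and $S_2 = (S \setminus \{u\}) \cup \{v_2\}$ for some $v_1, v_2 \in V(G) \setminus S$ with $v_1 \neq v_2$.

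Now $S_1 \ominus S_2 = \{v_1, v_2\}$, so $|S_1 \ominus S_2| = 2$, which means $S_1$ and $S_2$ are adjacent in $\xte(G)$. Combined with the edges $SS_1$ and $SS_2$, this shows that $\{S, S_1, S_2\}$ induces a $K_3$ subgraph in $\xte(G)$, contradicting the hypothesis that $\xte(G)$ is triangle-free. Hence $\deg_{\xte(G)}(S) \le X(G)$ for every vertex $S$, proving $\Delta(\xte(G)) \le X(G)$. I do not anticipate a significant obstacle here; the only subtlety is confirming that distinct neighbors mapping to the same removed vertex must be joined by an edge, which is immediate from the symmetric difference characterization of adjacency in $\xte(G)$.
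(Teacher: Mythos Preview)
Your proof is correct and follows essentially the same approach as the paper: both argue that two neighbors of $S$ sharing the same removed vertex $u$ (equivalently, the same $(X(G)-1)$-subset $S\setminus\{u\}$) must be adjacent to each other, yielding a triangle, so the map from neighbors to removed vertices is injective and $\deg_{\xte(G)}(S)\le |S|=X(G)$.
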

\bpf Let  $S\in V(\xte(G))$ such that $\deg_{\xte(G)}(S)=\Delta(\xte(G))$.  In order to be a neighbor of $S$, a minimum $X$-set of $G$ must differ by exactly one vertex from $S$.    If $S'\subsetneq  S$ and there exist distinct vertices $a,b\not \in S$ such  that $S'\cup\{a\}$ and $S'\cup\{b\}$ are minimum $X$-sets, then \{$S$, $S'\cup\{a\}$,  $S'\cup\{b\}$\} induces a $K_3$ in $\xte(G)$.  Thus each subset of $S$  of order $X(G)-1$ appears in at most one minimum $X$-set other than $S$.  Since there are exactly $X(G)$ subsets of $S$ with $X(G)-1$ vertices, there are at most $X(G)$ minimum $X$-sets  adjacent to $S$.  Thus $\Delta(\xte(G))\le X(G)$.
\epf

The hypothesis ``$\xte(G)$ does not have a $K_3$ subgraph" is necessary in token jumping reconfiguration graphs for zero forcing (shown in \cite{GHH}) and for power domination (shown in the next section).

Unlike the TAR model, graphs of different orders can have the same token jumping reconfiguration graphs (see examples for zero forcing in \cite{GHH}  and  for power domination in the next section).

\section{Power domination token jumping (TJ) graphs}\label{s:TE}

In addition to applying universal results for the  token jumping reconfiguration graph for power domination, we introduce a technique specific to power domination for realizing particular graphs as token jumping reconfiguration graphs. 
\begin{obs}
If $G$ is a graph with $\pd(G)=1$, then $\te(G)=K_r$ where $r$ is the number of minimum power dominating sets.
\end{obs}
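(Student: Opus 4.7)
The plan is to unpack the definitions directly; this statement is essentially immediate once we observe what ``token jumping'' means for singleton sets. Since $\pd(G)=1$, every minimum power dominating set has cardinality exactly $1$, so the vertex set of $\te(G)$ is
\[
\{\{v\}:v\in V(G)\text{ and }\{v\}\text{ is a power dominating set of }G\},
\]
which has cardinality $r$ by hypothesis.

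Next I would verify adjacency. By definition of the TJ reconfiguration graph, two minimum $X$-sets $S$ and $S'$ are adjacent in $\xte(G)$ if and only if one can be obtained from the other by exchanging a single vertex, i.e., $|S|=|S'|$ and $|S\ominus S'|=2$. For any two distinct singleton sets $\{u\}$ and $\{v\}$ in $V(\te(G))$, we have $|\{u\}|=|\{v\}|=1$ and $|\{u\}\ominus\{v\}|=|\{u,v\}|=2$, so $\{u\}$ and $\{v\}$ are adjacent in $\te(G)$.

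Since every pair of distinct vertices of $\te(G)$ is adjacent, $\te(G)\cong K_r$. There is no real obstacle here: the argument is a one-line consequence of the definitions, and the only thing to be careful about is that when $r=1$ we interpret $K_1$ as the trivial graph (a single vertex, no edges), which is consistent with the convention above. I would present the proof in two or three sentences along the lines just given.
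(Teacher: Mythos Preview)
Your proposal is correct and matches the paper's treatment: the paper states this as an observation with no proof, precisely because it follows immediately from the definition of the TJ reconfiguration graph applied to singleton sets, exactly as you argue.
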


It is well known that if a vertex $v$ is adjacent to three or more leaves, then $v$ must be in every minimum power dominating set.

\begin{rem}\label{p:add-leaves}
Suppose $G$ is a graph, $v$ is a vertex of $G$, and $v$ is in every minimum power dominating set.  Let $\tilde G$  be a graph obtained from $G$ by adding $1$ or more leaves adjacent to $v$ so that at least $3$ leaves are adjacent to $v$.  Then $\te(\tilde G)=\te (G)$ because $G$ and $\tilde G$ have the same minimum power dominating sets (each of which must contain $v$).
\end{rem}

 The ability to add leaves and retain the same token jumping graph presents a challenge to finding power domination token jumping graphs that are unique.  Additional difficulty is presented by the fact that $\te(G)\cong\te(H)$ does not imply that $G$ and $H$ have the same order (see Example \ref{e:Kpq}).

\subsection{Power domination TJ reconfiguration graphs of specific families of graphs}

In this section we  determine the power domination token jumping reconfiguration graphs  of complete graphs, complete bipartite graphs, paths, cycles, and wheels.

\begin{ex}\label{e:Kn} $\te(G)=K_n$ for $G=K_n, P_n, C_n, W_n$, or any graph of order $n$ where any one vertex is a power dominating  set.
\end{ex}

\begin{ex}\label{e:Kpq} Complete bipartite graphs:  $\te(K_{1,n-1})=K_1$ for $n\ge 4$ because the center vertex (which has degree $n-1$) is the unique minimum power dominating set (and $\te(K_{1,n-1})=K_n$ for $n=1,2,3$). $\te(K_{2,n-2})=K_2$ for $n\ge 5$ because either of the two vertices in the partite set of two is a minimum power dominating set (and $\te(K_{2,2})=K_4$).  
To see that $\te(K_{a,b})=K_a\cp K_b$ for $3\le a\le b\le n-3$, denote the vertices of $K_{a,b}$ by $x_1,\dots,x_a$ and $y_1,\dots,y_b$.  Then every minimum power dominating set is of the form $\{x_i,y_j\}$ for $i=1,\dots,a$ and $ j=1,\dots,b$.  Furthermore, $\{x_k,y_\ell\}$ is adjacent to $\{x_k,y_j\}$ for  $ j=1,\dots,\ell-1,\ell+1,\dots,b$ and to $\{x_i,y_\ell\}$ for $ i=1,\dots,k-1,k+1,\dots,a$. 
\end{ex}

As with $\te(K_{1,n-1})=K_1$ for $n\ge 4$, if $G$ is constructed from a graph $G'$ by adding three or more leaves to each vertex of $G'$ (i.e., $G=G'\circ rK_1$ with $r\ge 3$), then $\te(G)=K_1$  because $V(G')$ is the unique minimum power dominating set.  


\subsection{Realizing specific graphs as power domination TJ reconfiguration graphs}


Recall that the hypercube is always realizable as a token jumping graph by Corollary \ref{c:hypercube}, and $K_n$ is realizable by many graphs (see Example \ref{e:Kn}).  
For $r\ge 2$ and $d\ge 1$, the {\em Hamming graph} $H(d,r)$ has as vertices all $d$-tuples of elements of $\{0,\dots,r-1\}$, with two vertices adjacent if and only if they differ in exactly one coordinate.  The Hamming graph 
$H(d,r)$ is isomorphic to $K_r\cp \cdots\cp K_r$ with $d$ copies of $K_r$; the hypercube $Q_d$ is $H(d,2)$.
\begin{prop}
Let $G'$ be a graph of order $d$ and $G=G'\circ 2K_1$.  Then $\te(G)=H(d,3)$.
\end{prop}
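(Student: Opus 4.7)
The plan is to characterize the minimum power dominating sets of $G = G' \circ 2K_1$ as the cartesian choices over the twin pendants, and then observe that token jumping corresponds exactly to the Hamming-graph adjacency on these choices. Write $V(G') = \{v_1, \dots, v_d\}$ and let $\ell_1(v), \ell_2(v)$ denote the two leaves appended to $v \in V(G')$ in the corona.

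The first step is the key structural lemma: a subset $S \subseteq V(G)$ is a power dominating set of $G$ if and only if $S \cap T_v \neq \emptyset$ for every $v \in V(G')$, where $T_v := \{v, \ell_1(v), \ell_2(v)\}$. For the forward direction, I would argue that if $S \cap T_v = \emptyset$, then neither $\ell_1(v)$ nor $\ell_2(v)$ can ever be observed. Indeed, each $\ell_i(v)$ has only $v$ as a neighbor, so the only way for $\ell_i(v)$ to be observed is by propagation from $v$. But $v$ can propagate to a leaf only if its other leaf is already observed, and by the same reasoning that other leaf can only be observed through $v$, so neither leaf is ever observed. For the reverse direction, if $S$ meets every $T_v$, then every $v \in V(G')$ and all leaves lie in $N[S]$ except possibly for one leaf per vertex $v$ where $S \cap T_v \subseteq \{\ell_1(v), \ell_2(v)\}$ has a single leaf, say $\ell_1(v)$; in round $1$ the vertex $v$ is already observed (adjacent to $\ell_1(v) \in S$), its neighbors in $G'$ are observed since every other $T_w$ contributes an observed vertex in round $1$, and the only possibly unobserved neighbor of $v$ is $\ell_2(v)$, so $v$ propagates to $\ell_2(v)$ in round $2$.

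From this characterization, since the $T_v$ are pairwise disjoint and cover all but no extra vertices that need to be in $S$, a minimum power dominating set contains exactly one vertex from each $T_v$. Hence $\pd(G) = d$ and the set of minimum power dominating sets is in bijection with functions $f : V(G') \to \{0,1,2\}$, where $f(v) = 0, 1, 2$ corresponds to choosing $v$, $\ell_1(v)$, $\ell_2(v)$ respectively. Denote the minimum PDS corresponding to $f$ by $S_f$.

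For the edge structure, I observe that for any two distinct functions $f, g$, a vertex $v$ with $f(v) \neq g(v)$ contributes exactly two elements (one in each direction) to $S_f \ominus S_g$, while $v$ with $f(v) = g(v)$ contributes nothing. Thus $|S_f \ominus S_g| = 2 \cdot |\{v : f(v) \neq g(v)\}|$, and $S_f \sim S_g$ in $\te(G)$ (i.e.\ $|S_f \ominus S_g| = 2$) if and only if $f$ and $g$ differ in exactly one coordinate. This is precisely the edge relation of the Hamming graph $H(d,3) = K_3 \cp \cdots \cp K_3$, completing the proof.

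The main obstacle is the structural lemma: showing rigorously that when $S \cap T_v = \emptyset$ neither pendant twin can ever be observed (because propagation into a pendant twin requires the other twin to already be observed, which leads to a circular dependency), and conversely that the chosen-one-per-triple configuration always propagates through the pendants in a second round. The rest is a routine parameterization.
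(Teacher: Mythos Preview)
Your proposal is correct and follows essentially the same approach as the paper: characterize the minimum power dominating sets of $G'\circ 2K_1$ as the sets choosing exactly one vertex from each triple $\{v_i,x_i,y_i\}$, and then identify the resulting bijection with $\{0,1,2\}^d$ as a graph isomorphism to $H(d,3)$. The paper asserts the characterization without argument, whereas you supply the structural lemma and the symmetric-difference computation explicitly, but the route is the same.
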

\bpf Denote the vertices of $G'$ by $\{v_1,\dots,v_d\}$ and denote the leaves adjacent to $v_i$ in $G$ 
by $x_i$ and $y_i$.  Then every minimum power dominating set of $G$ 
has the form $\{w_1,\dots,w_d\}$ where $w_i\in\{v_i,x_i,y_i\}$.  Define $f_i:\{0,1,2\} \to \{v_i,x_i,y_i\}$ by $f_i(0)=v_i$, $f_i(1)=x_i$, and $f_i(2)=y_i$. Then mapping the sequence $j_1\dots j_d$ to the set $\{f_1(j_1),\dots ,f_d(j_d)\}$ is an isomorphism from $H(d,3)$ to $\te(G)$.
\epf

For a   connected graph $G$, define the graph $K^{2,3}(G)$ to be the graph obtained from $G$  deleting each edge of $G$  and adding three vertices of degree 2 adjacent to the endpoints of the edge that was deleted (so each edge of $G$ is replaced by three subdivided edges in $K^{2,3}(G)$).  We retain the vertex labels of $G$ in $K^{2,3}(G)$, so $V(G)\subset V(K^{2,3}(G))$.  Figure \ref{f:K23K3circK1} shows $K^{2,3}(K_3\circ K_1)$. 
\begin{figure}[h!]
\begin{center}
\scalebox{.4}{\includegraphics[width=8in]{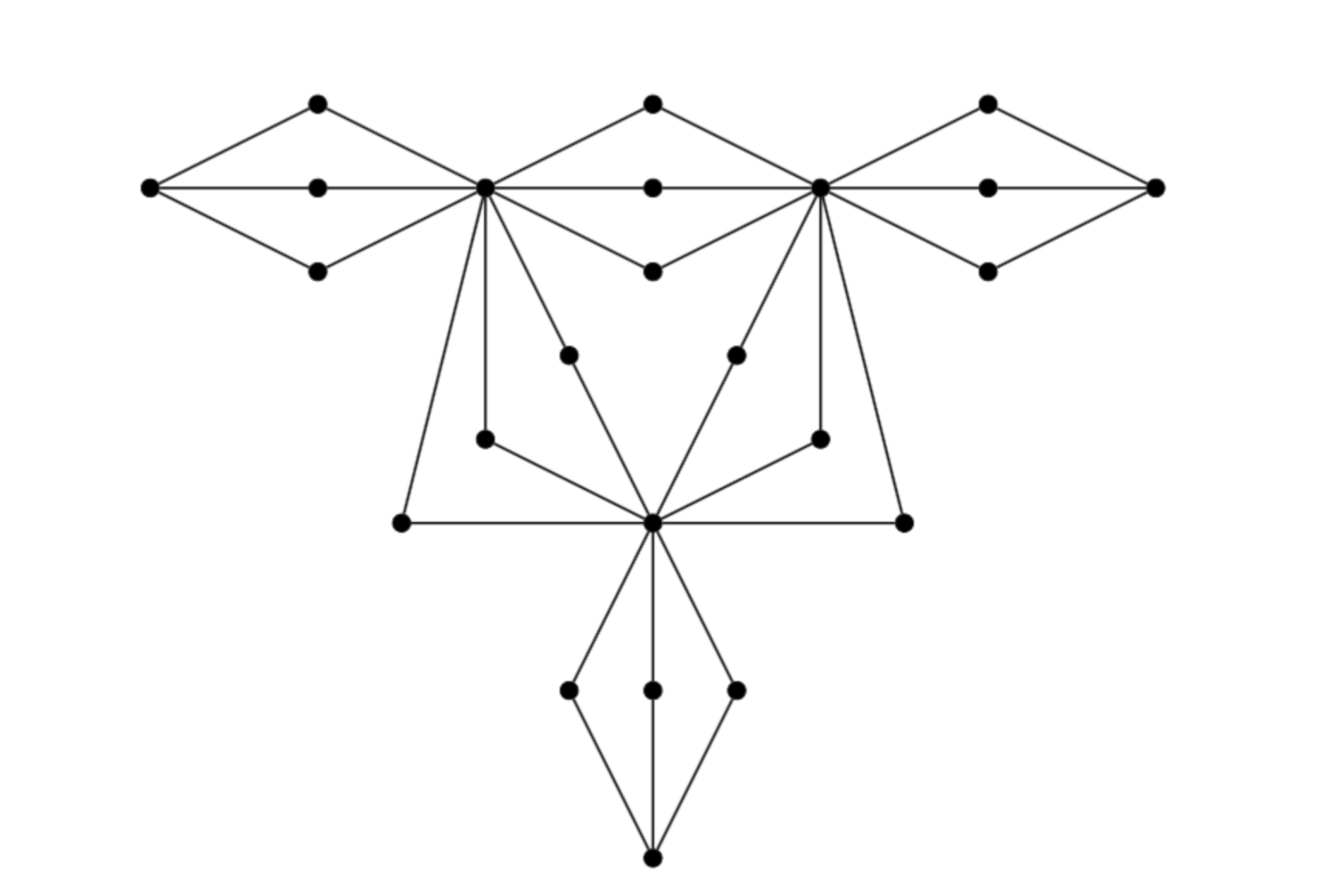}}
\caption{The graph $K^{2,3}(K_3\circ K_1)$.\label{f:K23K3circK1}}
\end{center}
\end{figure}

For a graph $G$,  a {\em vertex cover of $G$} (or a {\em vertex cover of the edges of $G$}) is a set of vertices $S$ such that every edge of $G$ is incident with a vertex in $S$.  

\begin{lem}\label{cover-K23G} Let $G$ be a graph.   If $S$ is a minimum power dominating set  of $K^{2,3}(G)$, then  $S\subseteq V(G)$. A set $S\subset V(G)$ is a  minimum vertex cover of  $G$ if and only if $S$ is a minimum power dominating set of $K^{2,3}(G)$.
 \end{lem}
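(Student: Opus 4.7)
The plan is to first establish an auxiliary equivalence for subsets of $V(G)$, and then use that equivalence to show every minimum PDS of $K^{2,3}(G)$ is contained in $V(G)$. Both assertions of the lemma follow from these two pieces.

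For the auxiliary equivalence, I will show that for $S\subseteq V(G)$, $S$ is a vertex cover of $G$ if and only if $S$ is a PDS of $K^{2,3}(G)$. The direction ``vertex cover $\Rightarrow$ PDS'' is a direct propagation computation: a vertex cover $S$ dominates every subdivision vertex in round one, and then each $v\in V(G)\setminus S$ is observed in round two through any subdivision vertex on an incident edge $vy$ with $y\in S$ (such a subdivision vertex has $v$ as its only unobserved neighbor). For ``PDS $\Rightarrow$ vertex cover'' I use the contrapositive: if $uv$ is an uncovered edge with subdivision vertices $w_1,w_2,w_3$, then by induction on the propagation round none of the $w_i$ are ever observed. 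The inductive step observes that to bring $w_k$ into $P^i[S]$ one would need some $z\in\{u,v\}$ in $P^{i-1}[S]$ with $N(z)\setminus P^{i-1}[S]=\{w_k\}$, but $\{w_1,w_2,w_3\}\subseteq N(z)$ and the inductive hypothesis places the other two $w_j$ outside $P^{i-1}[S]$, a contradiction.

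For the main claim that every minimum PDS $S$ is contained in $V(G)$, suppose for contradiction that a subdivision vertex $w\in S$ lies on edge $uv$ of $G$. I will split into three exhaustive cases. Case A: $u\in S$ or $v\in S$ (say $u\in S$). Then $S\setminus\{w\}$ is still a PDS, because $w$ is dominated by $u$, and if $v\notin S$ then $v$ is observed in round two via one of the other two subdivision vertices on $uv$ (each dominated by $u$ in round one); a monotonicity comparison shows $P^{i+1}[S\setminus\{w\}]\supseteq P^i[S]$ for $i\ge 1$, contradicting minimality of $S$. Case B: $u,v\notin S$ but a second subdivision vertex $w'$ on edge $uv$ also lies in $S$. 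Form $S'=(S\cap V(G))\cup\phi(S\setminus V(G))$, where $\phi$ maps each subdivision vertex in $S$ to an endpoint of its edge, and take $\phi(w)=\phi(w')=u$. Then $|S'|\le |S|-1$, and since $S$'s PDS-ness forces $S'$ to be a vertex cover of $G$, the auxiliary equivalence makes $S'$ a PDS, contradicting minimality. Case C: $u,v\notin S$ and $w$ is the unique subdivision vertex on edge $uv$ in $S$. Then applying the inductive argument from the auxiliary equivalence's reverse direction to the two remaining subdivision vertices $w',w''$ on $uv$ shows neither is ever observed from $S$, so $S$ is not a PDS, contradicting the hypothesis.

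Combining the two stages: a minimum PDS lies in $V(G)$ and is a vertex cover of matching size, while conversely any minimum vertex cover is a PDS of size $\tau(G)=\pd(K^{2,3}(G))$, establishing the biconditional. The main obstacle will be Case C of the second stage, which is precisely where the three (rather than two) subdivision vertices per edge are essential: the two remaining $w',w''$ must be symmetrically stuck, and this same ``three copies'' phenomenon also powers the reverse direction of the auxiliary equivalence, so both stages rely on the same delicate inductive bookkeeping.
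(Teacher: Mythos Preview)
Your proof is correct and rests on the same core observation as the paper's: if an edge $uv$ of $G$ has neither endpoint in $S$ and at most one of its three subdivision vertices in $S$, then the remaining subdivision vertices can never be observed (each of $u,v$ perpetually has at least two unobserved subdivision neighbors). Both proofs also verify directly that a vertex cover of $G$ power dominates $K^{2,3}(G)$.

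The organization differs. The paper argues in one pass: for any power dominating set $S$ of $K^{2,3}(G)$, each edge $v_iv_j$ forces $v_i\in S$, $v_j\in S$, or at least two subdivision vertices on that edge into $S$; it then asserts, rather tersely, that minimality rules out the third option and forbids stray subdivision vertices, so a minimum $S$ lies in $V(G)$ and is a vertex cover. You instead isolate the equivalence ``vertex cover $\Leftrightarrow$ PDS'' for subsets of $V(G)$ as a standalone lemma, and then run a three-case contradiction to show a minimum PDS cannot contain a subdivision vertex. Your Cases~A--C supply exactly the detailed justification the paper's terse step omits; in particular, your replacement map $\phi$ in Case~B makes explicit the ``two subdivision vertices cost more than one endpoint'' counting that the paper leaves implicit, and your monotonicity claim $P^{i+1}[S\setminus\{w\}]\supseteq P^i[S]$ in Case~A is the standard one-step-delay argument. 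The two routes are equivalent in content, but yours is the more carefully worked-out version.
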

\bpf
Denote the vertices of $G$ by $v_1,\dots,v_n$; note that $\{v_1,\dots,v_n\}\subset V(K^{2,3}(G))$.   Suppose first that $S$ is a power dominating set of $K^{2,3}(G)$. For every  $i,j\in\{1,\dots,n\}$ with $v_iv_j\in E(G)$,  one of $v_i$ or $v_j$ must be in $S$ or two of the three vertices in $N_{K^{2,3}(G)}(v_i) \cap N_{K^{2,3}(G)}(v_j)$ must be in $S$; if not, the other vertices  in $N_{K^{2,3}(G)}(v_i) \cap N_{K^{2,3}(G)}(v_j)$ can’t be observed.    
In order for $S$ to be a minimum power dominating set, $S$  must contain one of $v_i$ or $v_j$ for every $v_iv_j\in E(G)$ (and no vertices in $N_{K^{2,3}(G)}(v_i) \cap N_{K^{2,3}(G)}(v_j)$).  Thus $S$ is a vertex cover  of $G$. 

Now suppose $S$ is a minimum vertex cover of $G$ and let $v_iv_j\in E(G)$ with $v_i\in S$. Then the set of 3 vertices in $N_{K^{2,3}(G)}(v_i) \cap N_{K^{2,3}(G)}(v_j)$ is dominated and one of these vertices can observe $v_j$ if $v_j\not\in S$.  Thus $S$ is a power dominating set of $K^{2,3}(G)$. Since every minimum  power dominating set of $K^{2,3}(G)$ is a subset of $V(G)$, $S$ is a minimum  power dominating set of $K^{2,3}(G)$.
 \epf

\begin{prop} For $r\ge 3$, $\te(K^{2,3}(K_r\circ K_1))=K_{1,r}$.
 \end{prop}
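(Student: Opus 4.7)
The plan is to invoke Lemma \ref{cover-K23G} to translate the problem about minimum power dominating sets of $K^{2,3}(K_r\circ K_1)$ into a problem about minimum vertex covers of $G := K_r\circ K_1$, and then simply enumerate those vertex covers and check adjacencies in $\te$.

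First I would set notation: label the clique vertices of $G$ as $v_1,\dots,v_r$ and the leaf attached to $v_i$ as $\ell_i$. A vertex cover of $G$ must cover every clique edge $v_iv_j$ and every pendant edge $v_i\ell_i$. Since the vertex cover number of $K_r$ is $r-1$, any vertex cover must contain at least $r-1$ of the $v_i$'s. From this I would argue that there are exactly $r+1$ minimum vertex covers, each of size $r$: the set $C_0=\{v_1,\dots,v_r\}$, and for each $k\in\{1,\dots,r\}$ the set $C_k=(\{v_1,\dots,v_r\}\setminus\{v_k\})\cup\{\ell_k\}$ (in $C_k$, the leaf $\ell_k$ is needed to cover the edge $v_k\ell_k$). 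Any cover missing two $v_i$'s would fail to cover the clique edge between them, and any cover including $\ell_k$ while still containing $v_k$ would be non-minimum. By Lemma \ref{cover-K23G}, these $r+1$ sets are exactly the minimum power dominating sets of $K^{2,3}(G)$, so $\te(K^{2,3}(G))$ has $r+1$ vertices.

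Next I would compute the edges of $\te(K^{2,3}(G))$ by checking symmetric differences. For each $k$, $C_0\ominus C_k=\{v_k,\ell_k\}$, so $C_0$ and $C_k$ differ by a single token exchange and are adjacent. For distinct $k,k'$, $C_k\ominus C_{k'}=\{v_k,v_{k'},\ell_k,\ell_{k'}\}$, which has size $4$, so $C_k$ and $C_{k'}$ are not adjacent. Hence $C_0$ is joined to each of $C_1,\dots,C_r$, and these are the only edges, giving $\te(K^{2,3}(G))\cong K_{1,r}$ with $C_0$ as the center.

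I do not expect any serious obstacle; the main step is just the enumeration of minimum vertex covers of $K_r\circ K_1$, and the observation that $C_0$ is the only cover which shares $r-1$ vertices with every other minimum cover (so it is the unique vertex of degree $r$). The hypothesis $r\ge 3$ is used implicitly to ensure that each $v_i$ in $K_r\circ K_1$ has the three clique-neighbor subdivisions plus the pendant subdivisions needed by Lemma \ref{cover-K23G}, and to avoid small-case degeneracies in the adjacency count.
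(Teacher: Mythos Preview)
Your proof is correct and follows essentially the same approach as the paper: both invoke Lemma \ref{cover-K23G} to reduce to enumerating the minimum vertex covers of $K_r\circ K_1$, identify the $r+1$ covers $C_0,\dots,C_r$, and read off the star structure. Your write-up is in fact slightly more explicit than the paper's in checking the symmetric differences to verify the adjacencies; the only quibble is your side remark about why $r\ge 3$ is needed (Lemma \ref{cover-K23G} holds for any $G$, so the hypothesis is not needed for that step).
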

\bpf
 Denote the vertices of $K_r$ by $v_1,\dots,v_r$ and let $u_i$ denote the leaf neighbor of $v_i$. Let $S$ be a minimum power dominating set of $K^{2,3}(K_r\circ K_1)$.  By Lemma \ref{cover-K23G}, either $v_i$ or $u_i$ must be in $S$ for $i\in\{1,\dots,r\}$, and   one of $v_i$ or $v_j$ must be in $S$ for every  $i,j\in\{1,\dots,r\}$ with $i\ne j$. Thus the minimum power dominating sets are $\{v_1,\dots,v_r\}$ and $\{v_1,\dots,v_{i-1},u_i,v_{i+1},\dots,v_r\}$, so $\te(K^{2,3}(K_r\circ K_1))=K_{1,r}$.  \epf

The next result shows that every path can be realized as a power domination token jumping graph (note $P_2=K_2$).
\begin{prop} For  $r\ge 2$, $\te(K^{2,3}(P_{2r}))=P_{r+1}$.
 \end{prop}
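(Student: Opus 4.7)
The plan is to leverage Lemma \ref{cover-K23G} to reduce the problem to understanding the minimum vertex covers of $P_{2r}$ together with the single-vertex-exchange adjacency on them. Label $V(P_{2r})=\{v_1,\dots,v_{2r}\}$ with edges $v_iv_{i+1}$. The perfect matching $\{v_1v_2,v_3v_4,\dots,v_{2r-1}v_{2r}\}$ forces every vertex cover of $P_{2r}$ to have at least $r$ vertices, and a direct construction shows this bound is attained, so the minimum power dominating sets of $K^{2,3}(P_{2r})$ are exactly the vertex covers of $P_{2r}$ of size $r$.

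Next, I would enumerate these minimum vertex covers. Passing to complements, they correspond to maximum independent sets of $P_{2r}$, and a stars-and-bars argument (placing $r$ ones on a length-$2r$ binary string with no two consecutive, leaving exactly one extra zero to position in one of $r+1$ slots) shows there are exactly $r+1$ of them. Explicitly, for $k\in\{0,1,\dots,r\}$ set
\[
S_k=\{v_2,v_4,\dots,v_{2k}\}\cup\{v_{2k+1},v_{2k+3},\dots,v_{2r-1}\},
\]
so that $S_0=\{v_1,v_3,\dots,v_{2r-1}\}$ and $S_r=\{v_2,v_4,\dots,v_{2r}\}$. I would verify directly that each $S_k$ is a vertex cover of size $r$, and since there are $r+1$ such sets, they exhaust the minimum vertex covers.

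It remains to analyze the adjacencies in $\te(K^{2,3}(P_{2r}))$. A short computation gives $S_k\ominus S_{k+1}=\{v_{2k+1},v_{2k+2}\}$ for $0\le k\le r-1$, so consecutive $S_k$ and $S_{k+1}$ are adjacent. For indices with $|k-k'|\ge 2$, the ``transition point'' between the even-indexed and odd-indexed portions of the set shifts by more than one position, so one checks $|S_k\ominus S_{k'}|\ge 4$, ruling out adjacency. Therefore $\te(K^{2,3}(P_{2r}))$ is precisely the path $S_0-S_1-\cdots-S_r$, which is isomorphic to $P_{r+1}$.

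The main obstacle is the bookkeeping in the middle step: nailing down that the $r+1$ sets $S_k$ really exhaust the minimum vertex covers of $P_{2r}$, and then confirming that no two non-consecutive $S_k,S_{k'}$ are adjacent. Once the parameterization is in hand, the remaining adjacency analysis and the application of Lemma \ref{cover-K23G} are routine.
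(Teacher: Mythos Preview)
Your proposal is correct and follows essentially the same approach as the paper: both invoke Lemma~\ref{cover-K23G} to reduce to minimum vertex covers of $P_{2r}$, then list exactly the same $r+1$ sets $S_k$ and conclude that the reconfiguration graph is $P_{r+1}$. The paper's proof is terser (it simply asserts the list of minimum power dominating sets without the matching lower bound, the enumeration argument, or the explicit symmetric-difference check), so your write-up supplies the details the paper leaves to the reader.
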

\bpf
 Denote the vertices of $P_{2r}$ by $v_1,\dots,v_{2r}$.   By Lemma \ref{cover-K23G}, the minimum power dominating sets of $K^{2,3}(P_{2r})$ are $\{v_1, v_3,\dots,v_{2r-1}\}$, $\{v_2, v_4,\dots,v_{2r}\}$, and $\{v_2, v_4,\dots,v_{2k},v_{2k+1},$ $v_{2k+3},\dots,v_{2r-1}\}$ for $k=1,\dots,r-1$.
\epf

\begin{prop}\label{TJoddcycle} For an odd integer $n \geq 3$, 
$\te(K^{2,3}(C_n))=C_n$.
\end{prop}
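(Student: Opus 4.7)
The plan is to apply Lemma~\ref{cover-K23G} to identify the minimum power dominating sets of $K^{2,3}(C_n)$ with the minimum vertex covers of $C_n$, enumerate those covers, compute the token-jumping adjacencies directly, and recognize the result as $C_n$. Label the vertices of $C_n$ cyclically as $v_0,\ldots,v_{n-1}$, with all indices read modulo $n$.

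First I enumerate the minimum vertex covers of $C_n$. Such a cover has $(n+1)/2$ vertices, so its complement is an independent set of $(n-1)/2$ vertices. Reading around the cycle, no two non-cover vertices may be consecutive, so the non-cover vertices partition the cyclic sequence of cover vertices into $(n-1)/2$ nonempty arcs whose lengths sum to $(n+1)/2$; this forces exactly one arc of length $2$ and all others of length $1$. Hence each minimum vertex cover is uniquely determined by the location of its ``double edge'' $v_iv_{i+1}$ of two consecutive cover vertices, yielding the family
\[S_i=\{v_i,v_{i+1},v_{i+3},v_{i+5},\ldots,v_{i+n-2}\}\qquad(i\in\mathbb{Z}/n\mathbb{Z})\]
of exactly $n$ distinct sets. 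By Lemma~\ref{cover-K23G}, these are the minimum power dominating sets of $K^{2,3}(C_n)$.

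Next I compute the token-jumping adjacencies. Writing $T_i:=V(C_n)\setminus S_i=\{v_{i+2},v_{i+4},\ldots,v_{i+n-1}\}$, the sets $S_i$ and $S_j$ are adjacent iff $|T_i\cap T_j|=(n-3)/2$. By cyclic symmetry I may fix $i=0$ and study $|T_0\cap T_j|$ for $j\in\{1,\ldots,n-1\}$, where $T_j=T_0+j\pmod n$. A case split on the parity of $j$ gives a clean formula: for $j$ even the non-wrap-around shifts stay even and lie in $T_0$ while the wrap-around shifts become odd and miss $T_0$, so $|T_0\cap T_j|=(n-1-j)/2$, equal to $(n-3)/2$ only when $j=2$; for $j$ odd the roles flip, and the wrap-around shifts contribute the even set $\{0,2,\ldots,j-1\}$, intersecting $T_0$ in $(j-1)/2$ elements, equal to $(n-3)/2$ only when $j=n-2$. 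Hence $S_0\sim S_j$ iff $j\equiv\pm 2\pmod n$.

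Finally, the graph on vertex set $\{S_0,\ldots,S_{n-1}\}$ with edges $\{S_i,S_{i+2}\}$ for $i\in\mathbb{Z}/n\mathbb{Z}$ is the circulant graph of step $2$ on $n$ vertices; since $\gcd(2,n)=1$ for odd $n$, the iteration $i\mapsto i+2$ visits every residue exactly once before returning, so this circulant is a single $n$-cycle. Therefore $\te(K^{2,3}(C_n))\cong C_n$. The main obstacle is the parity-based adjacency computation; the key subtlety is that because $n$ is odd, reducing mod $n$ can flip parity, and this is precisely what allows $j\equiv\pm 2$ to produce a single wrapped element landing back in $T_0$ while all other shifts fail.
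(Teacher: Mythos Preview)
Your proof is correct and follows essentially the same approach as the paper's: both use Lemma~\ref{cover-K23G} to reduce to minimum vertex covers of $C_n$, parametrize them by the position of the unique consecutive pair (your $S_i$ coincides with the paper's $S_a$), and show adjacency holds exactly when the indices differ by $\pm 2$, yielding a step-$2$ circulant that is an $n$-cycle since $\gcd(2,n)=1$. Your version supplies considerably more detail---the arc-counting enumeration of the covers and the parity-based computation of $|T_0\cap T_j|$---where the paper simply asserts the adjacency criterion.
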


\bpf 
Let $n=2k+1$ for an integer $k \geq 1$, label the vertices of $C_n$ by the elements of $\mathbb{Z}_n$, and perform arithmetic modulo $n$. A minimum power dominating set for $K^{2,3}(C_n)$ (which is a minimum vertex cover of $C_n$) is a set of the form $S_a = \{ a \} \cup \{(a+(2j-1)):1 \leq j \leq k \}$ for all $a \in \mathbb{Z}_n$. Two sets $S_a$ and $S_b$ are adjacent in $\te(K^{2,3}(C_n))$ if and only if $b=a \pm 2$. The reconfiguration graph is connected because $\{ (a+2j): 0 \leq j <n \}=\mathbb{Z}_n$. Thus, $\te(K^{2,3}(C_n))=C_n$.
\epf

It is shown in \cite{GHH} that a star of order at least three cannot be realized as a zero forcing token jumping reconfiguration graph. 

\begin{quest} Is there a graph that cannot be realized as power domination token jumping reconfiguration graph?
\end{quest}


\subsection{Connectedness}
We now provide two examples of when the $\te(G)$ is disconnected. The next result follows from the proof of Theorem \ref{t:TARdisconnect}, which shows $T_n$ is an isolated vertex of $\te(G_n)$, where $G_n$ is defined in Definition \ref{e:bigupper}.

\begin{cor}
For $n\ge 3$,  $\te(G_n)$ is disconnected.
\end{cor}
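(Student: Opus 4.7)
The plan is to exploit the complete characterization of minimum power dominating sets of $G_n$ that was already carried out in the proof of Theorem \ref{t:TARdisconnect}. Recall that it established $\pd(G_n) = n-1$ and, more importantly, that a set $R \subseteq V(G_n)$ is a power dominating set if and only if $R$ contains $T_n$ or at least one vertex from all but one of the sets $S_{n,1}, \dots, S_{n,n}$. Since the vertices of $\te(G_n)$ are precisely the minimum power dominating sets (of size $n-1$), I would first extract from this characterization that the vertices of $\te(G_n)$ come in exactly two families: the single set $T_n = \{u_1, \dots, u_{n-1}\}$, and the sets $A \subset \bigcup_{i=1}^n S_{n,i}$ of size $n-1$ that contain exactly one vertex from all but one of the $S_{n,i}$. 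These latter sets are entirely contained in $\bigcup_{i=1}^n S_{n,i}$ and thus are disjoint from $T_n$.

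Next I would show that $T_n$ is an isolated vertex of $\te(G_n)$. Any neighbor of $T_n$ in $\te(G_n)$ must be obtained by exchanging a single vertex of $T_n$ for a vertex outside $T_n$, hence has the form $(T_n \setminus \{u_i\}) \cup \{v\}$ with $v \in \bigcup_{i=1}^n S_{n,i}$. Such a set contains the $n-2$ vertices $T_n \setminus \{u_i\}$, so it is neither equal to $T_n$ nor contained in $\bigcup_{i=1}^n S_{n,i}$. By the characterization above, it is not a minimum power dominating set of $G_n$, so it is not a vertex of $\te(G_n)$. Thus $T_n$ has no neighbors.

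Finally, since $n \geq 3$, the set $\{v_1^1, v_1^2, \dots, v_1^{n-1}\}$ is a minimum power dominating set of $G_n$ (it meets the second form of the characterization, missing $S_{n,n}$), so $\te(G_n)$ contains at least two vertices, one of which is isolated. Therefore $\te(G_n)$ is disconnected.

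There is really no substantive obstacle here, since the hard combinatorial work (the classification of minimum power dominating sets of $G_n$) is already done in Theorem \ref{t:TARdisconnect}; the only thing to verify is that the TJ-neighborhood condition cannot be satisfied by $T_n$, which is immediate from that classification.
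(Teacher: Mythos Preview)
Your proposal is correct and follows essentially the same approach as the paper: the paper simply remarks that the corollary follows from the proof of Theorem~\ref{t:TARdisconnect}, which shows $T_n$ is an isolated vertex of $\te(G_n)$. You have spelled out this implication in detail, using the same characterization of minimum power dominating sets established there.
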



Some cases of the grid graph also provide an example of $\te(G)$ being disconnected.

\begin{prop}
Let $G_{a,b}$ be the grid graph $G_{a,b}=P_{a}\cp P_{b}$ for $a=5,6,7$, or $8$ and $b\geq 12$. Then $\te(G_{a,b})$ is disconnected.
\end{prop}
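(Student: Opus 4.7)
The plan is to produce, for each $a\in\{5,6,7,8\}$, two minimum power dominating sets of $G_{a,b}$ whose supports lie in disjoint column ranges, and then show that no sequence of token jumps can carry one to the other. I label vertices of $P_a\cp P_b$ by pairs $(i,j)$ with $1\leq i\leq a$ and $1\leq j\leq b$, thinking of $i$ as the row index and $j$ as the column index.

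First I will record the values $\pd(G_{a,b})=2$ for $a\in\{5,6,7\}$ and $\pd(G_{a,b})=3$ for $a=8$; these follow from the known grid power-domination formula of Dorfling and Henning, $\pd(P_a\cp P_b)=\lceil(a+1)/4\rceil$ for $a\leq b$. Next I will construct two boundary-localized minimum power dominating sets: a \emph{left set} $S_L$ supported in the leftmost few columns and a \emph{right set} $S_R$ supported in the rightmost few columns. For instance, when $a=5$ I take $S_L=\{(2,1),(4,1)\}$ and $S_R=\{(2,b),(4,b)\}$; a direct propagation check shows that $S_L$ observes all of column $1$ in round $1$, forces every vertex of column $2$ simultaneously in round $2$, and then sweeps rightward column by column, and similarly from the right for $S_R$. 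The cases $a=6,7,8$ are handled by analogous choices, e.g., for $a=8$ taking $S_L$ to be a suitable triple in columns $\{1,2\}$.

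The heart of the argument is a \emph{column-localization lemma}: there is a constant $c$, independent of $b$ and at most $6$, such that every minimum power dominating set $S$ of $G_{a,b}$ satisfies \emph{either} $j\leq c$ for every $(i,j)\in S$ \emph{or} $j\geq b-c+1$ for every $(i,j)\in S$. The intuition is that away from both boundaries the grid has too much ``room'' for unique-unobserved-neighbor forcing to start: after round $1$ from a pair (or triple) of tokens placed with every token in a middle column, every observed vertex has at least two unobserved neighbors, so propagation stalls and $S$ fails to be a power dominating set. I would prove this by a finite case analysis on the relative row positions and column-gap of the tokens of $S$, tracking, for each configuration, which observed vertices have a unique unobserved neighbor after round $1$. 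This case analysis is the main obstacle in the proof, and one must in particular rule out ``straddling'' sets whose tokens sit in widely separated columns.

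Given the localization lemma, the disconnection follows immediately. Because $b\geq 12>2c$, the column ranges $\{1,\dots,c\}$ and $\{b-c+1,\dots,b\}$ are disjoint, so the invariant $\chi(S)\in\{L,R\}$ recording which range contains $S$ is well-defined on every minimum power dominating set. If $S'$ is obtained from $S$ by a token jump, then $S$ and $S'$ share $\pd(G_{a,b})-1\geq 1$ tokens; these common tokens force $S'$ to meet the same column range as $S$, and since $S'$ is itself a minimum power dominating set, the localization lemma then pins $\chi(S')$ equal to $\chi(S)$. Thus $\chi$ is constant on each connected component of $\te(G_{a,b})$, and since $\chi(S_L)=L\neq R=\chi(S_R)$, the sets $S_L$ and $S_R$ lie in distinct components, so $\te(G_{a,b})$ is disconnected.
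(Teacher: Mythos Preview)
Your overall strategy---exhibit a left-localized and a right-localized minimum power dominating set, then show an invariant separates them---matches the paper's, but there are two genuine problems.

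\textbf{A factual error.} You assert $\pd(G_{8,b})=3$ via the formula $\lceil (a+1)/4\rceil$. The Dorfling--Henning result is $\pd(P_a\cp P_b)=\lceil a/4\rceil$ for $a\not\equiv 4\pmod 8$ (the $+1$ only appears when $a\equiv 4\pmod 8$), and since $8\not\equiv 4\pmod 8$ we get $\pd(G_{8,b})=2$. So for $a=8$ you need a pair of tokens, not a triple; the paper uses $\{(2,3),(3,5)\}$ and its mirror.

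\textbf{An unfilled gap.} Your column-localization lemma is the entire content of the proof, and you do not prove it: you give heuristic reasons why a set with all tokens in middle columns stalls, and you explicitly flag the ``straddling'' configurations (one token far left, one far right) as an unresolved obstacle. That is precisely the hard case. Moreover, your lemma is stronger than what is needed: for disconnection it suffices that no minimum power dominating set has one token in the left half $V_1=\{(x,y):y\le\lfloor b/2\rfloor\}$ and the other in $V_2$, since then the label ``which half contains $S$'' is already a token-jump invariant.

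The paper proves exactly this weaker statement, and does so by a two-step reduction you did not find. First, if the two tokens are at graph distance greater than $3$, their closed neighborhoods induce a disconnected observed region and one checks that propagation from each piece stalls after a few rounds. Second, if the tokens straddle the midline but are at distance at most $3$, they both lie in a central width-$13$ window; since a power dominating set of $G_{a,b}$ contained in this window also power dominates the induced copy of $G_{a,12}$, it is enough to verify that no minimum power dominating set of $G_{a,12}$ straddles the middle. The paper dispatches this finite check by a \emph{Sage} computation over $a\in\{5,6,7,8\}$. So where you promise an open-ended case analysis, the paper converts the problem into a bounded computer search on four fixed grids.
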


\begin{proof} Consider the grid graph $G_{a,b}=P_{a}\cp P_{b}$ for $a=5,6,7$, or $8$.
and $b\ge 12$. It was shown in \cite{DH06} that $\pd(G_{a,b})=\lceil \frac{a}{4} \rceil=2$. Let the vertices of the graph be labeled $(x,y)$ for $1\leq x\leq a$ and $1\leq y\leq b$. Observe that $S_1=\{(2,1),(5,1)\}$ and $S_2=\{(2,b),(5,b)\}$ are power dominating sets for $G_{a,b}$ when $a=5$ or $a=6$ and $S_3=\{(2,3),(3,5)\}$ and $S_4=\{(2,b-2),(3,b-4)\}$ are power dominating sets for $G_{a,b}$ when $a=7$ or $a=8$. Note both vertices of $S_1$ and $S_3$ are contained in $V_1=\{(x,y) : 1\leq y\leq \lfloor \frac{b}{2}\rfloor\}$, and both vertices of $S_2$ and $S_4$ are contained in $V_2=\{(x,y) : \lfloor \frac{b}{2}\rfloor+1\leq y\leq b\}$, and $V_1$ and $V_2$ are disjoint. Assume for contradiction that $\te(G_{a,b})$ is connected. Then there must be a power dominating set $S=\{(x_1,y_1),(x_2,y_2)\}$ such that $(x_1,y_1)\subset V_1$ and $(x_2,y_2)\subset V_2$.

If $\dist((x_1,y_1),(x_2,y_2))>3$, then after the domination step, the set of observed vertices will induce a  disconnected graph. Thus, no further observation can occur since all observed vertices will have two or more unobserved neighbors (other than $(x_1,y_1)$ and $(x_2,y_2)$, which have no unobserved neighbors). Thus, $\dist((x_1,y_1),(x_2,y_2))\leq 3$ and so $(x_1,y_1)\subset \{(x,y) : \lfloor \frac{b}{2}\rfloor-2\leq y \leq \lfloor\frac{b}{2}\rfloor\}$ and $(x_2,y_2)\subset \{(x,y) : \lfloor \frac{b}{2}\rfloor+1\leq y \leq \lfloor\frac{b}{2}\rfloor+3\}$. We note both of these sets are contained within the subgraph with vertex set $V'=\{(x,y) : 1\leq x\leq a, \, \lfloor \frac{b}{2}\rfloor-6\leq y \leq \lfloor\frac{b}{2}\rfloor+6\}$ and this subgraph is isomorphic to $G_{a,12}=P_{a}\cp P_{12}$ for $a=5,6,7,8$. Since $S$ is a power dominating set for $G_{a,b}$ and is contained in $V'$, it is also a power dominating set for $G[V']$. Since $G[V']$ is isomorphic to $G_{a,12}$, there must exist an analogous power dominating set $S'=\{(x_1',y_1'),(x_2',y_2')\}$ for $G_{a,12}$ such that $(x_1',y_1')\subset \{(x,y) : 4\leq y \leq 6\}$ and $(x_2',y_2')\subset \{(x,y) : 7\leq y \leq 9\}$. 

Using {\it Sage} \cite{sage:pd-recon},
all power dominating sets of $G_{a,12}$ can be found for $a=5,6,7,8$. We observe that for all power dominating sets $S=\{(x_1,y_1),(x_2,y_2)\}$, either both $(x_1,y_1)$ and $(x_2,y_2)$ are contained in the set $V_1=\{(x,y) : 1\leq y\leq 6\}$ or $(x_1,y_1)$ and $(x_2,y_2)$ are both contained in the set $V_2=\{(x,y) : 7\leq y\leq 12\}$. So no power dominating set $S'$ of the desired form exists for $G_{a,12}$. Therefore, it must be that $\te(G_{a,b})$ is disconnected.
\end{proof}



\end{document}